\title{The period-index problem for twisted topological $K$-theory}
\author{Benjamin Antieau}
\address{University of Washington\\Department of Mathematics\\Box 354350\\Seattle, WA 98195}
\email{benjamin.antieau@gmail.com}
\author{Ben Williams}
\address{University of British Columbia\\Department of Mathematics\\1984 Mathematics
Road\\Vancouver, B.C., Canada V6T 1Z2}
\email{tbjw@math.ubc.edu}
\definecolor{todo}{rgb}{1,0,0}
\definecolor{conditional}{rgb}{0,1,0}
\definecolor{e-mail}{rgb}{0,.40,.80}
\definecolor{reference}{rgb}{.20,.60,.22}
\definecolor{mrnumber}{rgb}{.80,.40,0}
\definecolor{citation}{rgb}{0,.40,.80}
\DeclareMathAlphabet{\mathpzc}{OT1}{pzc}{m}{it}
\DeclareMathOperator{\Ext}{Ext}
\DeclareMathOperator{\End}{End}
\DeclareMathOperator{\PGL}{PGL}
\DeclareMathOperator{\Spin}{Spin}
\DeclareMathOperator{\SU}{SU}
\DeclareMathOperator{\GL}{GL}
\DeclareMathOperator{\tors}{tors}
\DeclareMathOperator{\ord}{ord}
\DeclareMathOperator{\Spec}{Spec}
\DeclareMathOperator{\Pic}{Pic}
\DeclareMathOperator{\NS}{NS}
\DeclareMathOperator{\Hoh}{H}
\DeclareMathOperator{\Eoh}{E}
\DeclareMathOperator{\Vect}{Vect}
\DeclareMathOperator{\Hom}{Hom}
\DeclareMathOperator{\Br}{Br}
\DeclareMathOperator*{\holim}{holim}
\DeclareMathOperator*{\colim}{colim}
\DeclareMathOperator*{\hocolim}{hocolim}
\DeclareMathOperator{\Sing}{Sing}
\DeclareMathOperator{\eti}{eti}
\DeclareMathOperator{\ind}{ind}
\DeclareMathOperator{\degr}{deg}
\DeclareMathOperator{\per}{per}
\DeclareMathOperator{\sk}{sk}
\newcommand{\topo}{\mathrm{top}}
\newcommand{\an}{\mathrm{an}}
\newcommand{\et}{\mathrm{\acute{e}t}}
\newcommand{\rHoh}{\tilde{\Hoh}}
\newcommand{\msphere}{\mathrm{M}}
\newcommand{\rsphere}{\mathrm{R}}
\newcommand{\tsphere}{\mathrm{T}}
\newcommand{\sphere}{\mathrm{S}}
\newcommand{\Map}{\mathbf{Map}}
\newcommand{\Sp}{\mathrm{Sp}}
\newcommand{\K}{\mathbf{K}}
\DeclareMathOperator{\KU}{KU}
\DeclareMathOperator{\ku}{ku}
\newcommand{\Mod}{\mathrm{Mod}}
\newcommand{\Line}{\mathrm{Line}}
\newcommand{\rwe}{\overset\sim\rightarrow}
\newcommand{\riso}{\overset\simeq\rightarrow}
\newcommand{\we}{\simeq}
\newcommand{\iso}{\cong}
\newcommand{\isomto}{\riso}
\newcommand{\Shv}{\mathbf{Shv}}
\newcommand{\OX}{\mathcal{O}_X}
\newcommand{\CC}{\mathds{C}}
\newcommand{\QQ}{\mathds{Q}}
\newcommand{\ZZ}{\mathds{Z}}
\newcommand{\NN}{\mathds{N}}
\newcommand{\FF}{\mathds{F}}
\newcommand{\PP}{\mathds{P}}
\newcommand{\Gm}{\mathds{G}_{m}}
\renewcommand{\Z}{\mathds{Z}}
\theoremstyle{plain}
\newtheorem{theorem}{Theorem}[section]
\newtheorem{lemma}[theorem]{Lemma}
\newtheorem{proposition}[theorem]{Proposition}
\newtheorem{conjecture}[theorem]{Conjecture}
\newtheorem{corollary}[theorem]{Corollary}
\newtheorem{problem}[theorem]{Problem}
\theoremstyle{definition}
\newtheorem{definition}[theorem]{Definition}
\newtheorem{example}[theorem]{Example}
\theoremstyle{remark}
\newtheorem{remark}[theorem]{Remark}
\newcommand{\IEoh}{{}_{I}\!\Eoh}
\newcommand{\IIEoh}{{}_{II}\!\Eoh}
\let\oldmarginpar\marginpar
\renewcommand\marginpar[1]{\-\oldmarginpar[\raggedleft\footnotesize #1]%
{\raggedright\footnotesize #1}}
\begin{document}
\begin{abstract}
  \noindent
We introduce and solve a period-index problem for the Brauer group of a topological space.
The period-index problem is to relate the order of a class in the Brauer group to the degrees of Azumaya algebras representing it.
For any space of dimension $d$, we give upper bounds on the index depending only on $d$ and the order of the class.
By the Oka principle, this also solves the period-index problem for the analytic Brauer group of any Stein
space that has the homotopy type of a finite CW-complex.
Our methods use twisted topological $K$-theory, which was first introduced by Donovan and Karoubi.
We also study the cohomology of the projective unitary groups to give cohomological obstructions to a class being represented
by an Azumaya algebra of degree $n$.
Applying this to the finite skeleta of the Eilenberg-MacLane space $K(\ZZ/\ell,2)$, where $\ell$ is a prime, we construct
a sequence of spaces with an order $\ell$ class in the Brauer group, but whose indices tend to infinity.

\end{abstract}

\maketitle

\section{Introduction}

This paper gives a solution to a period-index problem for twisted topological $K$-theory. The solution should be viewed
as an existence theorem for twisted vector bundles.

Let $X$ be a connected CW-complex.  An Azumaya algebra $\mathcal{A}$ of degree $n$ on $X$ is a non-commutative algebra over the sheaf $\mathcal{C}$ of
complex-valued functions on $X$ such that $\mathcal{A}$ is a vector bundle of rank $n^2$ and the stalks are finite dimensional complex
matrix algebras $M_n(\CC)$. Examples of Azumaya algebras include the sheaves of
endomorphisms of complex vector bundles and the complex Clifford bundles $\CC l(E)$ of oriented even-dimensional real vector bundles $E$.  The Brauer
group $\Br(X)$ classifies topological Azumaya algebras on $X$ up to the usual Brauer equivalence: $\mathcal{A}_0$ and $\mathcal{A}_1$ are
Brauer equivalent if there exist vector bundles $\mathcal{E}_0$ and $\mathcal{E}_1$ and an isomorphism
\begin{equation*}
    \mathcal{A}_0\otimes_{\mathcal{C}}\End(\mathcal{E}_0)\iso\mathcal{A}_1\otimes_{\mathcal{C}}\End(\mathcal{E}_1)
\end{equation*}
of sheaves of $\mathcal{C}$-algebras.
Define $\Br(X)$ to be the free abelian group on isomorphism classes of Azumaya algebras modulo Brauer equivalence.

The group $\Br(X)$ is a subgroup of the cohomological Brauer group $\Br'(X)=\Hoh^3(X,\ZZ)_{\tors}$. This inclusion is obtained in the
following way. There is an exact sequence
\begin{equation*}
    1\rightarrow \CC^*\rightarrow\GL_n\rightarrow\PGL_n\rightarrow 1
\end{equation*}
which induces an exact sequence in non-abelian cohomology
\begin{equation*}
    \Hoh^1(X,\GL_n)\rightarrow\Hoh^1(X,\PGL_n)\rightarrow\Hoh^2(X,B\CC^*)=\Hoh^3(X,\ZZ).
\end{equation*}
The pointed set $\Hoh^1(X,\PGL_n)$ classifies degree-$n$ Azumaya algebras on $X$ up to isomorphism whereas the left arrow sends an
$n$-dimensional complex vector bundle $E$ to $\End(E)$.  The map from the free abelian group on isomorphism classes of Azumaya algebras to $\Hoh^3(X,\ZZ)$
thus factors through the Brauer group.

By a result of Serre~\cite{grothendieck_brauer_1}, every cohomological Brauer class $\alpha\in\Br'(X)=\Hoh^3(X,\ZZ)_{\tors}$ is represented
by topological Azumaya algebras of varying degrees when $X$ is a finite CW-complex, so that $\Br(X)=\Br'(X)$. The period-index problem is to determine which degrees arise for
a given class, $\alpha$.  The index $\ind(\alpha)$ is defined to be the greatest common divisor of these degrees. The period $\per(\alpha)$ is the order of
$\alpha$ in $\Br'(X)$. For any $\alpha$, one has $\per(\alpha)|\ind(\alpha)$.

As an example, consider
the Clifford bundle $\CC l(E)$ associated to an oriented $2n$-dimensional real vector bundle, $E$. This has class in the cohomological Brauer group
given by $W_3(E)$, the third integral Stiefel-Whitney class of $E$, and so $W_3(E)$ is either of period $1$ or $2$, depending on whether or not $E$
supports a $\Spin^c$-structure. The rank of $\CC l(E)$ is $2^{2n}$. If $\per(W_3(E))=2$, we find
\begin{equation*}
    2|\per(W_3(E))|\ind(W_3(E))|2^n.
\end{equation*}

To our knowledge, this topological period-index problem has not been considered before, although the parallel question in algebraic geometry has been the subject of a great deal
of work. For instance,
see~\cite{artin_brauer_1982, colliot_exposant_2002,colliot_brauersche_2001,de_jong_period_2004,becher_symbol_2004,lieblich_period_2007,lieblich_twisted_2008}.

By analyzing the cohomology of the universal period-$r$ cohomological Brauer class, $K(\ZZ/r,2)\xrightarrow{\beta}K(\ZZ,3)$,
we show that the period and index have the same prime divisors, as in the algebraic case of fields.
Then, using ideas from Antieau~\cite{antieau_index_2009},
we establish upper bounds on $\ind(\alpha)$ depending only on $\per(\alpha)$ and the dimension of $X$.

By studying the cohomology of the
projective unitary groups, we obtain obstructions to the representation of a class $\alpha \in \Br(X)$ by an Azumaya algebra of degree $n$.
Using these obstructions, we then construct families of examples of period-$r$ Brauer class whose indices form an unbounded sequence.

Suppose that $X$ is a $d$-dimensional finite CW-complex, meaning that there are no cells in
$X$ of dimension bigger than $d$.
Let $\alpha\in\Hoh^3(X,\ZZ)_{\tors}$, and let $r=\per(\alpha)$. The classifying space
$B\ZZ/r$ can be constructed as a countable CW complex with finitely many cells in each
dimension. It follows that its stable homotopy groups $\pi_j^s(B\ZZ/r)$ are all finitely generated abelian
groups. But, they are also torsion groups for $j>0$. Let $e_j^\alpha$ denote the exponent of the finite abelian group $\pi_j^s(B\ZZ/r)$.

This is our main theorem:

\begin{theorem}[theorem~\ref{thm:bound}] $\ind(\alpha)|\prod_{j=1}^{d-1}e_j^{\alpha}$.
\end{theorem}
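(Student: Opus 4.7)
The plan is a three-step argument. First, since $\per(\alpha)=r$, the class $\alpha$ factors through the Bockstein as $X \xrightarrow{\bar\alpha} K(\ZZ/r,2) \to K(\ZZ,3)$, and cellular approximation factors $\bar\alpha$ through $Y:=K(\ZZ/r,2)^{(d)}$. Writing $\gamma$ for the universal class on $Y$, functoriality of the index under pullback gives $\ind(\alpha) \mid \ind(\gamma)$, so it suffices to bound $\ind(\gamma)$. Second, a rank-$n$ $\gamma$-twisted vector bundle $\mathcal{E}$ on $Y$ determines a degree-$n$ Azumaya algebra $\End(\mathcal{E})$ representing $\gamma$, and conversely, so that $\ind(\gamma)$ is the positive generator of the image of the rank homomorphism $\mathrm{rk}\colon K^0_\gamma(Y) \to \ZZ$, the reconciliation between virtual and honest ranks being standard on the finite CW complex $Y$.

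Third, apply the Atiyah--Hirzebruch spectral sequence for twisted $K$-theory
\begin{equation*}
    E_2^{p,q} = \Hoh^p(Y;\, K^q(\pt)) \Longrightarrow K^{p+q}_\gamma(Y).
\end{equation*}
The rank factors through $E_\infty^{0,0} \hookrightarrow E_2^{0,0}=\ZZ$, so it is enough to show that $\prod_{j=1}^{d-1} e_j^\alpha$ lies in $E_\infty^{0,0}$. The first differential $d_3(1)=\gamma$ forces $E_4^{0,0}=r\ZZ$, matching the factor $e_1^\alpha = r$ since $\pi_1^s B\ZZ/r = \ZZ/r$. The higher differentials $d_{2k+1}$ out of $E^{0,0}$, together with the extension problems in the filtration on $K^0_\gamma(Y)$, must then be shown to be annihilated by successive multiplication by $e_j^\alpha$.

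This bounding is the heart of the argument. The key observation is that $Y$ sits inside $K(\ZZ/r,2)=B(B\ZZ/r)$, so its skeletal and Postnikov structure are governed by $B\ZZ/r$; stably, after the appropriate degree shift, the attaching data of the cellular filtration of $Y$ are detected by the groups $\pi_j^s(B\ZZ/r)$. Consequently, each AHSS obstruction (differential or filtration extension) at stage $j$ is annihilated by the exponent $e_j^\alpha$, and because $\dim Y = d$ there are no obstructions past $j = d - 1$. Iterating over $1 \leq j \leq d-1$ places $\prod_{j=1}^{d-1} e_j^\alpha$ in $E_\infty^{0,0}$, and Step 2 realizes this as the rank of an honest $\gamma$-twisted vector bundle on $Y$. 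The main obstacle is the final identification: rigorously matching the higher obstructions in the twisted $K$-theory AHSS with the exponents of $\pi_j^s(B\ZZ/r)$, which should rest on a comparison between the twisted $K$-theory of $K(\ZZ/r,2)^{(d)}$ and the stable attaching-map structure inherited from the delooping $K(\ZZ/r,2) = B(B\ZZ/r)$.
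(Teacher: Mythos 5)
Your Steps 1 and 2 are fine: reducing to the universal class on a skeleton of $K(\ZZ/r,2)$ by functoriality of the index, and identifying $\ind(\gamma)$ with the positive generator of the image of the rank map, agree with the paper's Propositions~\ref{p:indKasPermCyc} and \ref{prop:ku0compact} and Lemma~\ref{lem:indkind}. The gap is in Step 3, which you correctly flag as the heart of the argument but then leave at the level of a wish. The claim ``each AHSS obstruction at stage $j$ is annihilated by $e_j^\alpha$'' does not follow from the cellular structure of $K(\ZZ/r,2)^{(d)}$. In the twisted $\KU$-AHSS the differential $d_{j+1}$ out of $\Eoh^{0,0}$ lands in a subquotient of $\Hoh^{j+1}(Y,\ZZ)$ (the coefficient $\pi_j\KU$ is $\ZZ$, so the coefficient group gives no torsion bound), and the exponent of this cohomology group is in general \emph{larger} than $e_j^\alpha$. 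For instance $\Hoh^5(K(\ZZ/2,2),\ZZ)\iso\ZZ/4$, whereas $\pi_4^s(B\ZZ/2)=\ZZ/2$ so $e_4^\alpha=2$. So you cannot read off the required bound on the $\KU$-differential from the cohomology of the skeleton alone, and the phrase ``the attaching data are detected by $\pi_j^s(B\ZZ/r)$'' does not supply a mechanism that makes that cohomology group act with the smaller exponent.

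What is actually needed, and what the paper supplies, is a genuine map of twisted theories: the \emph{twisted unit map} $\sphere[\ZZ/r](\cdot)_\sigma\rightarrow\KU(\cdot)_\alpha$ of Proposition~\ref{prop:twistedunit}, built from the $\sphere$-algebra $\sphere[\ZZ/r]\we\Sigma^\infty(B\ZZ/r)_+$, the map $K(\ZZ/r,2)\rightarrow BGL_1\sphere[\ZZ/r]$, and the $\sphere$-algebra map $\sphere[\ZZ/r]\rightarrow\KU$. (The untwisted unit $\sphere\rightarrow\KU$ cannot be twisted by $\alpha$, which is exactly the obstruction your outline runs into.) This gives a morphism of descent spectral sequences (Corollary~\ref{cor:morss}). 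On the $\sphere[\ZZ/r]$-side the $E_2$ coefficients are $\pi_q\sphere[\ZZ/r]\iso\pi_q^s\oplus\pi_q^s(B\ZZ/r)$, which are finite torsion for $q>0$; since $\Hoh^{j+1}(X,A)$ is annihilated by the exponent of $A$, the differentials out of $\Eoh^{0,0}$ \emph{are} annihilated by the exponents $\ell_j$, so $\prod_{j=1}^{d-1}\ell_j$ is a permanent cycle. This permanent cycle is then pushed into the $\KU$-AHSS by the morphism of spectral sequences, and finally Theorem~\ref{t:PrimesPerIndAgree} together with the Kahn--Priddy theorem (Lemma~\ref{lem:msets}) discards the non-$r$-primary factors, replacing $\ell_j$ by $e_j^\alpha$. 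You would need to reproduce this construction (or an equivalent bridge from stable homotopy of $B\ZZ/r$ to the twisted $\KU$-differentials) for Step~3 to go through; as written it is an assertion, not an argument, and the displayed counterexample shows it cannot be proved by staring at $\Hoh^\ast(K(\ZZ/r,2),\ZZ)$ alone.
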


The theorem has the following corollary, which relies on the computation in~\cite{antieau_index_2009} of the stable homotopy groups $\pi_j^s B\ZZ/\ell^k$ in a range.

\begin{theorem}[theorem~\ref{thm:toperind}]
    Let $X$ be a $d$-dimensional finite CW-complex, let $\ell$ be a prime such that $2\ell>d+1$, and suppose $\alpha\in\Hoh^3(X,\ZZ)_{\tors}$
    satisfies $\per(\alpha)=\ell^k$ for some $k$. Then,
    \begin{equation*}
        \ind(\alpha)|\per(\alpha)^{[\frac{d}{2}]}.
    \end{equation*}
\end{theorem}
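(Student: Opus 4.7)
The plan is to derive the corollary directly from Theorem~\ref{thm:bound} by computing the relevant stable homotopy exponents $e_j^\alpha$ for $1 \leq j \leq d-1$. Since $r = \per(\alpha) = \ell^k$, a transfer-style splitting argument shows that $\tilde{\pi}_j^s(B\ZZ/\ell^k)$ is $\ell$-local for every $j > 0$, so each $e_j^\alpha$ is a power of $\ell$, and it suffices to determine the $\ell$-primary structure of these groups.

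For this, I would run the Atiyah-Hirzebruch spectral sequence
\begin{equation*}
E_2^{p,q} = \Hoh_p(B\ZZ/\ell^k; \pi_q^s(\sphere^0)) \Longrightarrow \pi_{p+q}^s(B\ZZ/\ell^k).
\end{equation*}
The integral homology of $B\ZZ/\ell^k$ is $\ZZ$ in degree $0$, $\ZZ/\ell^k$ in every positive odd degree, and zero in every positive even degree; and at the odd prime $\ell$ the stable stems $\pi_q^s(\sphere^0)_{(\ell)}$ vanish in the range $1 \leq q \leq 2\ell-4$, the first nonzero class being $\alpha_1$ in degree $2\ell-3$. Hence for total degree $j = p+q \leq 2\ell - 3$, the only $\ell$-local contribution to $E_2$ sits in the $q = 0$ row, so that
\begin{equation*}
\pi_j^s(B\ZZ/\ell^k)_{(\ell)} \iso
\begin{cases}
\ZZ/\ell^k & \text{if $j$ is odd,}\\
0 & \text{if $j$ is even.}
\end{cases}
\end{equation*}
This is the computation of Antieau~\cite{antieau_index_2009}, to which I would refer rather than redo.

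The hypothesis $2\ell > d+1$ is exactly what guarantees $d-1 \leq 2\ell-3$, so this calculation applies across the whole range $1 \leq j \leq d-1$ relevant to Theorem~\ref{thm:bound}. Consequently $e_j^\alpha = \ell^k$ when $j$ is odd and $e_j^\alpha = 1$ when $j$ is even. There are exactly $[\frac{d}{2}]$ odd integers in $\{1,\ldots,d-1\}$, so
\begin{equation*}
\prod_{j=1}^{d-1} e_j^\alpha = \ell^{k[\frac{d}{2}]} = \per(\alpha)^{[\frac{d}{2}]},
\end{equation*}
and Theorem~\ref{thm:bound} then yields $\ind(\alpha) \mid \per(\alpha)^{[\frac{d}{2}]}$. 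The only non-formal input beyond the main bound is the vanishing range for the $\ell$-primary stable stems --- precisely what the hypothesis $2\ell > d+1$ encodes --- and the remaining combinatorial step of counting odd integers is entirely routine.
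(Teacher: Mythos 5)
Your proof is correct and is essentially the paper's own argument: the paper's proof of theorem~\ref{thm:toperind} is a one-line citation of theorem~\ref{thm:bound} together with the two preceding propositions (vanishing of $\pi_j^s(\ell)$ for $0<j<2\ell-3$, and Antieau's computation of $\pi_j^s(B\ZZ/\ell^n)\iso\ZZ/\ell^n$ for odd $j$ and $0$ for even $j$ in the range $0<j<2\ell-2$), exactly the inputs you invoke, followed by the same count of $[\frac{d}{2}]$ odd integers in $\{1,\ldots,d-1\}$. The only cosmetic remark is that your phrase ``at the odd prime $\ell$'' is harmless but unnecessary: when $\ell=2$ the hypothesis $2\ell>d+1$ forces $d\le 2$, so the vanishing range is empty and the argument goes through vacuously.
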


Since the period and index are homotopy invariant, these theorems and all other statements
in the paper hold for any $X$ homotopy equivalent to a $d$-dimensional finite CW complex.

By the Oka principle~\cite{grauert_remmert}, which says that the topological and analytic classification of torsors of a complex Lie group agree on a Stein space,
the same theorems hold for the analytic period-index problem on a Stein space having the homotopy type of a finite CW-complex.

To prove these theorems, we study the Atiyah-Hirzebruch spectral sequence for twisted $K$-theory $\KU(X)_{\alpha}$
\begin{equation*}
    \Eoh_2^{p,q}=\Hoh^p(X,\ZZ(q/2))\Rightarrow \KU^{p+q}(X)_{\alpha}.
\end{equation*}
Here, twisted topological $K$-theory refers to the theory first introduced by Donovan-Karoubi~\cite{donovan_karoubi} and subsequently developed by Rosenberg~\cite{rosenberg_continuous},
Atiyah and Segal~\cite{atiyah-segal}, and others.
When $X$ is compact, the index of $\alpha$ is also the generator of the image of the edge map
\begin{equation*}
    \KU^0(X)_{\alpha}\rightarrow\Hoh^0(X,\ZZ).
\end{equation*}
We look for permanent cycles in $\Hoh^0(X,\ZZ)$; equivalently, we study the differentials leaving this group.

The study of such differentials is intricate. We should like to have a twisted analogue of the unit map $\pi_i^s\rightarrow \KU^{-i}$ but
unfortunately the unit map cannot be twisted when $\alpha$ is non-trivial. Instead, if $\alpha$ is $r$-torsion, there is a spectrum,
$\sphere[\ZZ/r]$, resembling a finite cover of the sphere spectrum, and a map $\sphere[\ZZ/r]\rightarrow \KU$ that extends the unit map and
which may be twisted. We obtain from this a map $\sphere[\ZZ/r](X)_{\sigma}\rightarrow \KU(X)_{\alpha}$, where $\sigma$ is a lift of $\alpha$
to $\Hoh^2(X,\ZZ/r)$.

The utility of this map is that the homotopy groups of $\sphere[\ZZ/r]$ are all finite torsion groups and we know the torsion in low
degrees (relative to $r$) by~\cite{antieau_index_2009}, which is also where this cycle of ideas originates.
In the Atiyah-Hirzebruch spectral sequence for $\sphere[\ZZ/r]_{\sigma}$, we obtain bounds on the differentials departing $\Hoh^0(X,\ZZ)=\Hoh^0(X,\sphere[\ZZ/r]^0)$.
The result follows by considering a natural morphism of Atiyah-Hirzebruch spectral sequences.

To give lower bounds on the index, we consider when a map $\alpha: X\rightarrow K(\ZZ,3)$ factors through $BPU_n\rightarrow K(\ZZ,3)$, which
is to say, when a class $\alpha \in \Hoh^3(X, \ZZ)$ may be represented by a degree-$n$ Azumaya algebra. If such a factorization exists, then we obtain
\begin{equation*}
    \Omega X\rightarrow PU_n\xrightarrow{\sigma_n} K(\ZZ,2).
\end{equation*}
The class $\sigma_n$ in $\Hoh^2(PU_n,\ZZ)$ is studied in~\cite{baum-browder}. There, the order of $\sigma_n^i$ is determined for all $i$. In particular, $\sigma_n^{n-1}$ is non-zero,
but $\sigma_n^n=0$. This leads to a necessary condition for the factorization of $X\rightarrow K(\ZZ,3)$ through $BPU_n$.

In another direction, the classifying space for $r$-torsion elements of $\Hoh^3(X,\ZZ)_{\tors}$ is the Eilenberg-MacLane space $K(\ZZ/r,2)$
together with the Bockstein
\begin{equation*}
    \beta\in\Hoh^3(K(\ZZ/r,2),\ZZ),
\end{equation*}
which is of order $r$. In order to use the Atiyah-Hirzebruch spectral sequence to study the period--index problem, we must understand the differentials
in the $\beta$-twisted Atiyah-Hirzebruch spectral sequence for $K(\ZZ/r,2)$. In particular, we want to study the differentials
\begin{equation*}
    d_{2k+1}^{\beta}:\Hoh^0(K(\ZZ/r,2),\ZZ)\dashrightarrow\Hoh^{2k+1}(K(\ZZ/r,2),\ZZ).
\end{equation*}
It is known by Atiyah and Segal~\cite{atiyah_twisted_2006} that $d_{3}^{\beta}(1)=\pm \beta$.

Atiyah and Segal also investigate the higher differentials for the twist of $K$-theory on
$K(\QQ,3)$, but the higher differentials for torsion twists have not been studied before.
In the torsion case, we show that there are infinitely many non-zero differentials leaving 
\[\Eoh_k^{0,0}\subseteq\Hoh^0(K(\ZZ/r,2),\ZZ),\]
which are all themselves necessarily torsion.

In fact, for a fixed prime $l$, we show that, after $d_3^{\beta}(1)=l$, one of the following $d_5^{\beta}(l),\ldots,d_{2l+1}^{\beta}(l)$ is non-zero in the $\beta$-twisted
spectral sequence for $K(\ZZ/l,2)$. Combining this with our results from the cohomology of the projective unitary groups, we find that $d_5^{\beta}(2)$ is non-zero in the $\beta$-twisted Atiyah-Hirzebruch
spectral sequence for $K(\ZZ/2,2)$.

In~\cite{antieau_index_2009}, the first named author used similar methods to define an \'etale index $\eti(\alpha)$ for classes of $\Br(X_{\et})$ and to give upper bounds in terms of the period.
In that work, the question of whether the \'etale index ever differs from the period was left unanswered. The examples of section~\ref{sec:4.2} and the comparison map of theorem~\ref{thm:comparison}
give evidence that $\eti\neq\per$ as invariants on $\Br(X_{\et})$.

Two papers that rely on this paper and extend its results have already appeared. The first is~\cite{aw2}, which shows that indeed the
\'etale index \emph{does} differ from the period. The second is~\cite{aw3}, which completely solves the topological period-index problem for
finite $6$-dimensional CW complexes.

The paper is organized as follows. In the rest of this section, we give background on the Brauer group and the period-index problem in various settings.
In section~\ref{sec:twistk} we establish the relevant technical tools in twisted $K$-theory we need, a comparison map
from twisted \'etale $K$-theory to twisted topological $K$-theory, as well as the twisted unit map. In section~\ref{sec:perind},
we prove the main general upper bounds. In section~\ref{sec:cohomology}, after an excursion into elementary number theory, we study the cohomology of the projective unitary spaces, and we apply this
to furnish spaces with a class of period $r$ and with arbitrarily large index.

\paragraph{Acknowledgments}
We thank Aravind Asok and Christian Haesemeyer for discussions and useful suggestions.

\subsection{The Brauer group}
For generalities on Azumaya algebras and Brauer groups on locally ringed sites, see Grothendieck~\cite{grothendieck_brauer_1}.
For example, an Azumaya algebra on a topological space is a sheaf of algebras over the continuous complex-valued functions that is
locally isomorphic to a matrix algebra over $\CC$.

Throughout, $X$ will denote a connected scheme, a complex analytic space, or a topological space. In any of
these cases, there is a Brauer group $\Br(X)$ consisting of Brauer-equivalence classes of
algebraic, analytic, or topological Azumaya algebras on $X$.

In each case, there is also a cohomological Brauer group, $\Br'(X)$, defined as $\Hoh^2(X_{\et},\Gm)_{\tors}$
when $X$ is algebraic, $\Hoh^2(X,\OX^{\ast})_{\tors}$ when $X$ is a complex analytic space, and
\begin{equation*}
    \Hoh^2(X,\mathcal{C}_X^\ast)_{\tors}=\Hoh^2(X,\CC^*)_{\tors}=\Hoh^3(X,\ZZ)_{\tors}
\end{equation*}
when $X$ is a topological space. There are natural inclusions
\begin{equation*}
    \Br(X)\subseteq\Br'(X)
\end{equation*}
in each case.

Given a complex algebraic scheme $X$, write $\Br(X_{\et})$ for the algebraic Brauer group of $X$, $\Br(X_{\an})$ for the analytic Brauer group of $X$, and $\Br(X_{\topo})$
for the topological Brauer group, and similarly for $\Br'$. There is a natural commutative diagram
\begin{equation*}
    \begin{CD}
        \Br(X_{\et})  @>>>    \Br(X_{\an})    @>>>    \Br(X_{\topo})\\
        @VVV            @VVV                    @VVV\\
        \Br'(X_{\et}) @>>>    \Br'(X_{\an})   @>>>    \Br'(X_{\topo}).
    \end{CD}
\end{equation*}
Similarly, if $X$ is a complex analytic space, let $\Br(X_{\an})$ and $\Br(X_{\topo})$ be the analytic and topological Brauer groups, respectively. There is a natural commutative diagram
\begin{equation*}
    \begin{CD}
        \Br(X_{\an})    @>>>    \Br(X_{\topo})\\
        @VVV                    @VVV\\
        \Br'(X_{\an})   @>>>    \Br'(X_{\topo}).
    \end{CD}
\end{equation*}

If $X$ is a complex algebraic scheme, and $\alpha\in\Br'(X_{\et})$, let $\alpha_{\an}$ and $\alpha_{\topo}$ be the corresponding elements in the analytic and topological cohomological Brauer groups.

\begin{example}
    If $X$ is a complex K3 surface, then $\Br(X_{\et})\iso(\QQ/\ZZ)^{22-\rho}$, where $0\leq \rho\leq 20$ is the rank of the Neron-Severi group $\NS(X)$~\cite[Section 3]{grothendieck_brauer_2}.
    But $\Br(X_{\topo})=0$ because $\Hoh^3(X,\ZZ)=0$.
\end{example}

\begin{example}
  If $X$ is a smooth projective rationally connected complex threefold, then $\Br(X_{\et})=\Br(X_{\an})=\Br(X_{\topo})$, a finite abelian
  group, because $\Hoh^2(X,\mathcal{O}_X)=\Hoh^3(X,\mathcal{O}_X)=0$.  Artin--Mumford~\cite{artin-mumford} used the Brauer group to give
  some of the first examples of smooth projective unirational threefolds which are not rational.  They found a threefold with non-trivial
  $2$-torsion in $\Br(X_{\topo})$, whereas the topological Brauer group is a birational invariant of smooth projective complex
  varieties~\cite[Proposition 1]{artin-mumford}, and $\Br(\PP^n_{\topo})=0$.
\end{example}

\subsection{A problem of Grothendieck}

\begin{problem}[Grothendieck]
    Determine when $\Br(X)\rightarrow\Br'(X)$ is surjective.
\end{problem}

Here is a summary of known results:
\begin{itemize}
    \item If $X$ is a scheme having an ample family of line bundles, then Gabber, in unpublished work, and de~Jong~\cite{de_jong_result_2003} have each shown that $\Br(X_{\et})=\Br'(X_{\et})$.
        This is the case, for instance, for quasi-projective schemes over affine schemes.
    \item The map is not necessarily surjective for non-separated schemes, by an example of~\cite{edidin_brauer_2001}.
        The example $Q$ is an affine quadric cone glued to itself along the smooth locus. In this case, $\Br(Q_{\et})=0$, while $\Br'(Q_{\et})=\ZZ/2$.
    \item The best result for complex spaces is the result of Schr\"oer~\cite{schroer_topological_2005} which gives a purely topological condition that ensures $\Br(X_{\an})=\Br'(X_{\an})$.
        This condition applies to complex Lie groups, Hopf manifolds, and all compact complex surfaces except for a class which conjecturally does not exist,
        namely the class VII surfaces that are not Hopf surfaces, Inoue surfaces, or surfaces containing a global spherical shell. Some special cases of Schr\"oer's theorem
        had been obtained by Iversen~\cite{iversen_brauer_1976}, Hoobler~\cite{hoobler_brauer_1972}, Berkovi\v{c}~\cite{berkovic_brauer_1972}, Elencwajg-Narasimhan~\cite{elencwajg-narasimhan},
        and Huybrechts-Schr\"oer~\cite{huybrechts-schroer}.
    \item   Serre~\cite[Th\'eor\`eme~1.6]{grothendieck_brauer_1} showed that if $X$ has the homotopy type of a finite CW-complex, then
        \begin{equation*}
            \Br(X_{\topo})=\Br'(X_{\topo})=\Hoh^3(X,\ZZ)_{\tors}.
        \end{equation*}
        In particular, this holds for open sets in compact manifolds.
    \item By the Oka principle, the analytic and topological classification of $PU_n$-torsors is the same over a Stein space. Thus, if $X$ is a (separated) Stein space having the homotopy type
        of a finite CW-complex, the result of Serre shows that
        \begin{equation*}
            \Br(X_{\an})=\Br'(X_{\an}).
        \end{equation*}
        In particular, this holds for Stein submanifolds of compact complex manifolds.
    \item   As a negative example, if $X=K(\ZZ/m,2)$, then $\Br(X_\topo)=0$, while
       \[\Br'(X_{\topo})=\ZZ/m.\] See corollary~\ref{c:kxz}.
\end{itemize}

\subsection{The prime divisors problem}

\begin{definition}
    For any class $\alpha\in\Br'(X)$, the period of $\alpha$, denoted $\per(\alpha)$, is the order of $\alpha$ in the group $\Br'(X)$.
\end{definition}

In general, the degree of an Azumaya algebra $\mathcal{A}$ is the positive square-root of its rank as an $\OX$-module. This is a locally constant
integer, and hence a class in $\Hoh^0(X,\ZZ)$. Henceforth, for simplicity, assume that $X$ is connected, so that $\degr(\mathcal{A})\in\ZZ$. It is
a general fact that if the class of $\mathcal{A}$ is $\alpha\in\Br(X)$, then
\begin{equation*}
    \per(\alpha)|\degr(\mathcal{A}).
\end{equation*}
Indeed, $A$ determines a $\PGL_n$-torsor, where $n=\degr(\mathcal{A})$. The class $\alpha\in\Br(X)\subseteq\Br'(X)$ is the coboundary of this torsor
\begin{equation*}
    \Hoh^1(X,\PGL_n)\rightarrow\Hoh^2(X,\Gm),
\end{equation*}
and this factors through
\begin{equation*}
    \Hoh^2(X,\mu_n)\rightarrow\Hoh^2(X,\Gm),
\end{equation*}
assuming that $n$ is prime to the characteristics of the residue fields of $X$. For details, see~\cite{grothendieck_brauer_1}.

\begin{definition}
    If $\alpha\in\Br(X)$, define the index of $\alpha$ to be
    \begin{equation*}
        \ind(\alpha)=\gcd\{\degr(\mathcal{A}):\mathcal{A}\in\alpha\}.
    \end{equation*}
    If $\alpha\in\Br'(X)\backslash\Br(X)$, set $\ind(\alpha)=+\infty$.
\end{definition}

In general, $\per(\alpha)|\ind(\alpha)$.

\begin{problem}
  Do $\per(\alpha)$ and $\ind(\alpha)$ have the same prime divisors?
\end{problem}

Known results:
\begin{itemize}
    \item   If $k$ is a field, $\alpha\in\Br(k)=\Br'(k)$, then the period and index of $\alpha$ have the same prime divisors.
    \item   If $X=\Spec A$ is an affine scheme, then Gabber has shown that $\per(\alpha)$ and $\ind(\alpha)$ have the same prime divisors.
    \item   An unpublished argument of Saltman using extension of coherent sheaves shows that the period and index have the same prime divisors for Brauer classes on regular noetherian
        irreducible schemes.
\end{itemize}

We prove below, in corollary \ref{c:indPerPrimesAgree}, that if $X$ is a finite CW-complex, and if $\alpha \in \Br(X)$,
then $\per(\alpha)$ and $\ind(\alpha)$ have the same prime divisors.

\subsection{The period-index problem}

A well-known problem for fields, where $\per(\alpha)$ and $\ind(\alpha)$ have the same prime divisors, is to bound the smallest
integers $e(\alpha)$ such that $\ind(\alpha)|\per(\alpha)^{e(\alpha)}$.

\begin{conjecture}[Colliot-Th\'el\`ene] Let $k$ be either a $C_d$-field or the function field of a $d$-dimensional variety over
    an algebraically closed field. Let $\alpha\in\Br(k)$, and suppose that $\per(\alpha)$ is prime to the characteristic
    of $k$. Then,
    \begin{equation*}
        \ind(\alpha)|\per(\alpha)^{d-1}.
    \end{equation*}
\end{conjecture}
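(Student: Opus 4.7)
The statement is a longstanding open conjecture rather than a theorem the paper will actually prove, so what follows is a description of the strategy that has yielded progress in low dimensions and an identification of where the obstacle lies, rather than a complete plan.

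First I would reduce to prime-power period. Since $\per(\alpha)$ and $\ind(\alpha)$ have the same prime divisors over a field (this is the classical Brauer theorem alluded to in the previous subsection), the class $\alpha$ decomposes into $p$-primary components whose indices are coprime, and it is enough to bound each one separately. So assume $\per(\alpha)=p^a$ with $p$ coprime to $\mathrm{char}(k)$. Next I would pass to a splitting variety: construct a geometrically integral $k$-variety $V_\alpha$ whose function field kills $\alpha$ and with the property that the existence of a zero-cycle of degree $n$ on $V_\alpha$ forces $\ind(\alpha)\mid n$. The natural candidates are generalized Brauer-Severi varieties and, following Lieblich, moduli stacks of stable $\alpha$-twisted sheaves with prescribed numerical invariants.

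In the function-field case, the method of de Jong and Lieblich for $d=2$ proceeds by showing that a suitable component of the moduli of $\alpha$-twisted sheaves is a geometrically rationally connected fibration over the base curve, and then applying Graber-Harris-Starr to produce a section, yielding $\ind(\alpha)\mid\per(\alpha)$. For general $d$ I would attempt an induction on $d$: express $V_\alpha$ (or a refined moduli of twisted sheaves) as a fibration in $(d-1)$-dimensional problems over a curve or over a $1$-dimensional base, apply the inductive hypothesis on the generic fiber to build a splitting of controlled degree, and then propagate this splitting to a global zero-cycle using a geometric positivity input. For the $C_d$ case, the analogous plan is to write out an explicit system of norm equations whose solvability certifies that $\ind(\alpha)\mid\per(\alpha)^{d-1}$ and then bound the number of variables against the degree using the $C_d$ property.

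The hard part, and the reason this remains open, is the geometric positivity step. Beyond $d=2$ one lacks a Graber-Harris-Starr-type theorem ensuring that rationally connected fibrations over a base of dimension $\geq 2$ admit rational multisections of controlled degree; and the twisted-sheaf moduli spaces relevant for higher $d$ are not expected to be rationally connected in any useful sense. Likewise, on the $C_d$ side, the number of variables in the norm-equation system grows faster than the $C_d$-property can accommodate once $d\geq 3$. I would therefore expect any actual proof to require a new geometric input — either a refinement of rational-connectedness arguments adapted to twisted sheaves in higher dimension, or a substantially sharper version of the $C_d$ estimate — and this is precisely the step I cannot supply. The most I can plausibly deliver along these lines is the reduction to prime-power period together with the qualitative assertion that $\ind(\alpha)$ is finite, which is already classical.
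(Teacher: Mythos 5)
You correctly recognize that this is an open conjecture, attributed to Colliot-Th\'el\`ene, which the paper states only as background motivation for its own topological analogue and does not attempt to prove; there is no proof in the paper to compare against. Your survey of the known partial results (de Jong and Lieblich in dimension two via twisted-sheaf moduli and Graber--Harris--Starr, the $C_d$ heuristic, reduction to prime-power period) matches the results the paper itself lists in the surrounding discussion, and your identification of the missing geometric-positivity input beyond $d=2$ as the essential obstruction is accurate.
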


Results:
\begin{itemize}
    \item   If $S$ is a surface over an algebraically closed field $k$, and $\alpha\in\Br(k(S))$, then de Jong~\cite{de_jong_period_2004} showed that $\per(\alpha)=\ind(\alpha)$.
    \item   If $S$ is a surface over $\FF_q$, and if $\alpha\in\Br(\FF_q(S))$, then $\ind(\alpha)|\per(\alpha)^2$, by Lieblich~\cite{lieblich_period_2009}.
    \item   If $C$ is a curve over $\QQ_p$, and if $\alpha\in\Br(\QQ_p(C))$ has period prime
        to $p$, then $\ind(\alpha)|\per(\alpha)^2$, by Saltman~\cite{saltman_division_1997}.
\end{itemize}

A few other results are known, but higher-dimensional cases remain elusive.

\begin{problem}[Period--index]
    Fix $X$, and find an integer $e_r$ such that
    \begin{equation*}
        \ind(\alpha)|\per(\alpha)^{e_r}
    \end{equation*}
    for all $\alpha\in{_{r^{\infty}}\Br(X)}$, where $_{r^{\infty}}\Br(X)$ denotes the $r$-primary part of the Brauer group.
\end{problem}

The main global result of which we are aware is for surfaces over algebraically closed fields, where $\per=\ind$ by Lieblich~\cite{lieblich_twisted_2008}.
Additionally, the above-mentioned argument of Saltman reduces the problem for smooth varieties over algebraically closed fields to their function fields.


Our theorems~\ref{thm:bound} and~\ref{thm:toperind} give a solution to this problem for finite CW-complexes. In dimension $4$ a stronger bound than ours is possible
via a direct argument using the Atiyah--Hirzebruch spectral sequence.

\section{Twisted $K$-theory}\label{sec:twistk}

In this section, we recall the twisted topological complex $K$-theory spectrum $\KU(X)_{\alpha}$ for any space $X$ and any class $\alpha\in\Hoh^3(X,\ZZ)_{\tors}$.
This spectrum is given by Atiyah--Segal~\cite[Section~4]{atiyah-segal}, generalizing work of Donovan and Karoubi~\cite{donovan_karoubi}.
For $X$ a CW-complex and $\alpha$ in the Brauer group, we introduce the twisted algebraic $K$-theory spectrum $\mathcal{K}^{\alpha}(X)$,
which is the algebraic $K$-theory of the category of $\alpha$-twisted topological vector bundles on $X$.
There is a map of spectra $\mathcal{K}^{\alpha}(X)\rightarrow \KU(X)_{\alpha}$, which induces an isomorphism $\mathcal{K}^{\alpha}_0(X)\riso \KU^0(X)_{\alpha}$
when $X$ is compact. When $X$ is a complex noetherian scheme, we obtain in this way
\begin{equation*}
    \K^{\alpha}(X)\rightarrow\mathcal{K}^{\alpha}(X)\rightarrow \KU(X)_{\alpha},
\end{equation*}
where $\K^{\alpha}(X)$ is the algebraic $K$-theory of locally free and finite rank $\alpha$-twisted $\mathcal{O}_X$-modules on $X$.
Since $\KU(X)_{\alpha}$ satisfies descent, the map above produces a map of spectra
\begin{equation*}
    a_{\et}\K^{\alpha}(X)\rightarrow \KU(X)_{\alpha},
\end{equation*}
where $a_{\et}\K^{\alpha}$ is the \'etale-sheafification of the presheaf of spectra $\K^{\alpha}$.

Having dealt with the foundational questions to our satisfaction, we twist the unit map
$\mathrm{S}\rightarrow\KU$
following~\cite{antieau_index_2009}. Finally, we use the rank map for $\alpha$-twisted $K$-theory to define
an approximation to the index of $\alpha$. If $X$ is a finite CW-complex, this approximation is
precisely the index, which allows us to use twisted $K$-theory to settle the prime-divisor
problem and, when combined with the twisted unit map, to provide estimates for the period--index
problem. These applications will be the content of the following two sections.

\subsection{Twists of $\KU$}

\begin{definition}
    Let $\alpha\in\Hoh^3(X,\ZZ)$. Let $\KU(X)_{\alpha}$ be the twisted $K$-theory spectrum constructed
    in Atiyah-Segal~\cite{atiyah-segal}, and let $\KU(\cdot)_{\alpha}$ be the contravariant functor on spaces mapping to $X$ given by
    \begin{equation*}
        V\mapsto \KU(V)_{\alpha|_V}.
    \end{equation*}
\end{definition}

Note that there is an ambiguity of sign in the construction of $\KU(X)_\alpha$.
We choose the following normalization: use the construction
that results in $d_3^{\alpha}(1)=\alpha$ in the Atiyah--Hirzebruch spectral sequence for twisted $\KU$-theory
(see~\cite[Proposition~4.6]{atiyah_twisted_2006}).
This sign ambiguity has no bearing on the period--index problem of this paper. The methods will always produce a positive integer, the index,
that is the same for $\alpha$ and $-\alpha$.

We use the following properties of $\KU(\cdot)_{\alpha}$.

\begin{proposition}\label{prop:descent}
    The presheaf of spectra $\KU(\cdot)_{\alpha}$ commutes with homotopy colimits (taken in spaces) of spaces over $X$. In particular,
    if $\mathcal{V}_*\rightarrow X$ is a hypercover of $X$, then
    \begin{equation*}
        \KU(X)_{\alpha}\rwe \holim_{\Delta}\KU(\mathcal{V}_*)_{\alpha}.
    \end{equation*}
    That is, $\KU(\cdot)_{\alpha}$ satisfies descent.
    \begin{proof}

      Suppose that $X$ is a CW-complex, and that $\{P_n\}_{n\in\ZZ}$ is a spectrum over $X$. It suffices here for this to mean simply that
      there are fibrations $\pi_n:P_n\rightarrow X$ with fixed distinguished sections and fixed weak equivalences $\Omega_X P_n\we_X
      P_{n-1}$, where the weak equivalence induces weak equivalences on the fibers over $X$. So, $P_n$ is an $\Omega$-spectrum over
      $X$. Atiyah and Segal~\cite[Section~4]{atiyah-segal} show that twisted $K$-theory is represented by just such a spectrum. To prove the
      proposition, it suffices to show that the sections of the spectrum over a space $Y\rightarrow X$ form an $\Omega$-spectrum, and to
      show that taking spaces of sections commutes with homotopy colimits.

      Now, suppose that $i\mapsto Y_i$ is an $I$-diagram of CW-complexes, each mapping to $X$, and suppose that $\hocolim_i Y_i\rightarrow
      X$ is a weak equivalence.  

      For an arbitrary CW-complex, $W$, let $\Map_\pi(W, P_n)$ denote the mapping spaces over $\pi$. It is the pull-back
        \begin{equation*}
          \xymatrix{ \Map_\pi(W,P_n) \ar[r] \ar[d]  & \ar[d]   \Map(W,P_n) \\
               \ast \ar[r] & \Map(W,X).}
        \end{equation*}
        which is also a homotopy pull-back since $P_n \to X$ is a fibration.
        There is a natural homotopy equivalence $\holim_i \Map (Y_i, Z) \to \Map (X, Z)$ for an arbitrary space $Z$; by application of
        $\holim_i (Y_i, \cdot )$ to the diagram
        \begin{equation*}
          \xymatrix{ & \ar[d] P_n \\ \ast \ar[r] & X}
        \end{equation*}
        and use of the Fubini theorem for homotopy limits, we deduce that
        \begin{equation*}
            \Map_\pi(X,P_n)\rightarrow\holim_i\Map_\pi(Y_i,P_n)
        \end{equation*}
        is a weak equivalence.
        Now it suffices to prove that \[\Map_\pi(X,\Omega_X P_n)\we\Omega\Map_\pi(X,P_n),\] which follows since $\Omega_X
        P_n$ is a limit.

        This implies that $\KU(\cdot)$ satisfies descent since $\hocolim_{\Delta}(V_{\ast})\we X$ by~\cite{dugger-isaksen}.
    \end{proof}
\end{proposition}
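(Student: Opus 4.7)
The plan is to represent $\KU(\cdot)_\alpha$ by a parametrized spectrum, as Atiyah and Segal do, and then reduce descent to the standard fact that mapping into a fibration sends homotopy colimits of the source to homotopy limits. Concretely, fix a sequence of fibrations $\pi_n:P_n\to X$ with distinguished sections and structure equivalences $\Omega_X P_n\we_X P_{n-1}$ representing $\KU(\cdot)_\alpha$, so that for each $Y\to X$ the spectrum $\KU(Y)_\alpha$ is built from the pullback mapping spaces $\Map_\pi(Y,P_n)$.

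First I would check the pointwise statement: for any CW-complex $W$ over $X$,
\begin{equation*}
\Map_\pi(W,P_n)=\Map(W,P_n)\times_{\Map(W,X)}\{*\}
\end{equation*}
is a homotopy pullback, because $P_n\to X$ is a fibration. Then, given a diagram $i\mapsto Y_i$ of CW-complexes over $X$ with $\hocolim_i Y_i\we X$, the standard adjunction $\Map(\hocolim_i Y_i,Z)\we\holim_i\Map(Y_i,Z)$ applied to $Z=P_n$ and to $Z=X$ (with the section $*\to X$) gives two compatible homotopy equivalences. Applying $\holim_i\Map(Y_i,\cdot)$ to the defining pullback square for $\Map_\pi(X,P_n)$ and using the Fubini theorem for homotopy limits, the pullback is preserved, yielding the weak equivalence
\begin{equation*}
\Map_\pi(X,P_n)\rwe\holim_i\Map_\pi(Y_i,P_n).
\end{equation*}

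Next I would upgrade this from spaces to spectra. Since $\Omega_X P_n$ is already a limit over $X$, mapping-into-a-fiberwise-limit commutes with the loop functor, so $\Map_\pi(Y,\Omega_X P_n)\we\Omega\,\Map_\pi(Y,P_n)$. This guarantees that the sections $\{\Map_\pi(Y,P_n)\}_n$ form an $\Omega$-spectrum, so the level equivalence above promotes to an equivalence of spectra. This establishes that $\KU(\cdot)_\alpha$ sends homotopy colimits of spaces over $X$ to homotopy limits of spectra.

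Finally, to deduce descent for a hypercover $\mathcal{V}_*\to X$, I would invoke the theorem of Dugger–Isaksen that $\hocolim_\Delta\mathcal{V}_*\we X$ for a hypercover of a CW-complex, and apply the homotopy-colimit statement just proved. The main technical obstacle is keeping the parametrized and unparametrized mapping spaces straight and justifying the Fubini-type interchange, but once the pullback description of $\Map_\pi(W,P_n)$ is set up and one knows $P_n\to X$ is a genuine fibration, everything reduces to formal manipulations with homotopy limits.
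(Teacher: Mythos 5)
Your proposal is correct and follows essentially the same approach as the paper: represent the twisted $K$-theory by a parametrized $\Omega$-spectrum $\{P_n\to X\}$, express the section spaces as homotopy pullbacks of mapping spaces, use the $\Map(\hocolim,-)\we\holim\Map(-,-)$ adjunction together with Fubini for homotopy limits, observe that fiberwise loops commute with $\Map_\pi$, and finish with the Dugger--Isaksen theorem that a hypercover of a CW-complex satisfies $\hocolim_\Delta\mathcal{V}_*\we X$.
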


\begin{proposition}\label{p:descSS}
    Let $X$ be a topological space, and let $\alpha\in\Hoh^3(X,\ZZ)_{\tors}$. There is an Atiyah--Hirzebruch (or descent) spectral sequence:
    \begin{equation*}
        \Eoh_2^{p,q}=\Hoh^p(X,\ZZ(q/2))\Rightarrow \KU^{p+q}(X)_{\alpha}
    \end{equation*}
    with differentials $d_r^{\alpha}$ of degree $(r,-r+1)$,
    which converges strongly if $X$ is a finite dimensional CW-complex.
    The edge map $\KU^0(X)_{\alpha}\rightarrow\Hoh^0(X,\ZZ)$ is the index map (or rank map, or reduced norm map).
    \begin{proof}
        This is~\cite[Theorem~4.1]{atiyah-segal}.
    \end{proof}
\end{proposition}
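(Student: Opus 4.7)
The plan is to derive the spectral sequence from the skeletal filtration of $X$, leveraging the descent property of $\KU(\cdot)_{\alpha}$ established in Proposition~\ref{prop:descent}. Filter $X$ by its skeleta $X^{(0)}\subseteq X^{(1)}\subseteq\cdots\subseteq X^{(d)}=X$ and apply the contravariant functor $\KU(\cdot)_{\alpha}$ to obtain a finite tower of spectra
\begin{equation*}
    \KU(X)_{\alpha}\rightarrow\KU(X^{(d-1)})_{\alpha}\rightarrow\cdots\rightarrow\KU(X^{(0)})_{\alpha}.
\end{equation*}
Because each inclusion $X^{(p-1)}\hookrightarrow X^{(p)}$ is a cofibration and $X^{(p)}$ is the homotopy pushout attaching $p$-cells, descent implies that the tower arises from the cofiber sequences $X^{(p-1)}\hookrightarrow X^{(p)}\rightarrow X^{(p)}/X^{(p-1)}$; in the standard way, one extracts a spectral sequence with $E_1^{p,q}=\KU^{p+q}(X^{(p)},X^{(p-1)})_{\alpha}$. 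Strong convergence is automatic since the filtration has length $d$.

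Next, I would identify the $E_1$-page. The quotient $X^{(p)}/X^{(p-1)}$ is a wedge $\bigvee_{i}S^p$ indexed by the $p$-cells of $X$. Excision (itself a consequence of descent) reduces the relative term to the collection of characteristic maps $\bigsqcup_i D^p\rightarrow X^{(p)}$; since each $D^p$ is contractible, the restriction of $\alpha$ along the characteristic map is nullhomotopic, so the twist trivializes on each cell and
\begin{equation*}
    \KU^{p+q}(X^{(p)},X^{(p-1)})_{\alpha}\cong\bigoplus_i\widetilde{\KU}^{p+q}(S^p)\cong\bigoplus_i\KU^q(\mathrm{pt}).
\end{equation*}
As $\KU^q(\mathrm{pt})=\ZZ$ for $q$ even and $0$ for $q$ odd, this identifies $E_1^{p,q}$ with the cellular cochain group $C^p(X,\ZZ(q/2))$. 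By naturality of the trivializations on cells, $d_1$ coincides with the cellular coboundary, so
\begin{equation*}
    E_2^{p,q}=\Hoh^p(X,\ZZ(q/2)).
\end{equation*}

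Finally, for the edge map I would argue that $\KU^0(X)_{\alpha}\rightarrow\Hoh^0(X,\ZZ)$ factors through $\KU^0(X^{(0)})_{\alpha}\cong\bigoplus_{v}\ZZ$, where the image of an $\alpha$-twisted bundle is its rank on each component. For connected $X$, the subgroup of $\Hoh^0(X,\ZZ)=\ZZ$ hit is generated by the degrees of Azumaya representatives of $\alpha$, which is $\ind(\alpha)$ by definition.

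The main obstacle is the rigorous identification of the relative term in the twisted setting: $\alpha$ need not trivialize on $X^{(p)}$ itself for $p\geq 3$, so one cannot simply reduce to untwisted $K$-theory globally. The key observation is that $\alpha$ \emph{does} trivialize after pullback along the characteristic maps of individual cells, which is what permits the reduction to $\widetilde{\KU}^{p+q}(S^p)$. Once this step is accepted, the remainder of the argument is a line-by-line repetition of the classical Atiyah--Hirzebruch construction.
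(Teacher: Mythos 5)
Your skeletal-filtration argument is correct and is essentially the proof behind the cited Theorem~4.1 of Atiyah and Segal; the paper itself gives only the citation. Two small points worth flagging: the $E_1$-term should be a product over $p$-cells, $\prod_i\widetilde{\KU}^{p+q}(S^p)$, matching the cellular cochain group $C^p(X,\pi_q\KU)=\prod_i\pi_q\KU$, rather than a direct sum when $X$ may have infinitely many cells per dimension; and the phrase ``naturality of the trivializations on cells'' is carrying real weight --- the space of nullhomotopies of $\alpha|_{D^p}$ is a torsor over $\Map(D^p,K(\ZZ,2))\we K(\ZZ,2)$ and hence connected, which makes each cell-by-cell identification well-defined up to homotopy, but the compatibility of these choices across attaching maps is precisely what must be tracked to equate $d_1$ with the cellular coboundary, and is the only step that genuinely differs from the untwisted Atiyah--Hirzebruch argument.
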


We will actually use a re-indexed, Bousfield--Kan-style, version of this spectral sequence:
\begin{equation*}
    \Eoh_2^{p,q}=\Hoh^p(X,\pi_q\KU)\Rightarrow\pi_{q-p}\KU(X)_{\alpha}.
\end{equation*}
The differentials are $d_r^{\alpha}$ of degree $(r,r-1)$.

\begin{proposition}
    The differential
    \begin{equation*}
        d_3^{\alpha}:\Hoh^0(X,\ZZ)\rightarrow\Hoh^3(X,\ZZ)
    \end{equation*}
    sends $1$ to $\alpha$.
    \begin{proof}
        This is~\cite[Proposition~4.6]{atiyah_twisted_2006}, where we have altered the construction to change the sign.
        See also Antieau~\cite{antieau_cech_2010} for the analogous computation in twisted algebraic $K$-theory.
    \end{proof}
\end{proposition}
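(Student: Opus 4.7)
The plan is to use naturality of the twisted Atiyah--Hirzebruch spectral sequence to reduce to a universal example, then compute the differential there. Both $\KU(X)_{\alpha}$ and its AHSS are functorial in the pair $(X,\alpha)$, so the assignment $(X,\alpha)\mapsto d_3^{\alpha}(1)\in\Hoh^3(X,\ZZ)$ is a natural operation. Given any torsion $\alpha$ of period $r$, classify it by a map $f_\alpha\colon X\to K(\ZZ/r,2)$ with $f_\alpha^{*}\beta=\alpha$, where $\beta$ is the Bockstein. Naturality forces
\[
d_3^{\alpha}(1) = f_\alpha^{*}\bigl(d_3^{\beta}(1)\bigr),
\]
so it suffices to identify $d_3^{\beta}(1)\in\Hoh^3(K(\ZZ/r,2),\ZZ)=\ZZ/r$.

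Since this cohomology group is cyclic and generated by $\beta$, we must have $d_3^{\beta}(1)=c\cdot\beta$ for a single integer $c$ independent of $r$ (the compatibility coming from the reduction maps $K(\ZZ/rs,2)\to K(\ZZ/r,2)$). The remaining task is to show $c=\pm 1$ and then to pin down the sign. I would establish $|c|=1$ via the Postnikov decomposition of the twisted $\KU$-spectrum: the twist modifies the first $k$-invariant connecting $\Hoh^0(X,\pi_0\KU)$ with $\Hoh^3(X,\pi_2\KU)$ by the class $\alpha$ itself, which is a structural feature of the construction of $\KU(X)_{\alpha}$ from $PU(\mathcal{H})$-bundles together with the extension $\ZZ\to\mathcal{H}\to PU(\mathcal{H})$ that places $PU(\mathcal{H})$ in the third stage of the Postnikov tower of $K(\ZZ,3)$. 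Equivalently, one can verify $|c|=1$ in the simplest nontrivial case $X=\sphere^3$ with $\alpha$ a generator, where $\KU^{0}(\sphere^3)_{\alpha}$ vanishes precisely when $d_3^{\alpha}(1)\neq 0$, and the former is directly computable from the Atiyah--Segal model.

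The main obstacle is the sign: the above argument only yields $d_3^{\alpha}(1)=\pm\alpha$, with the sign depending on implicit choices in the definition of $\KU(X)_{\alpha}$ (e.g.\ whether one twists by $\alpha$ or by $-\alpha$ in the $PU(\mathcal{H})$-bundle construction). The authors have bypassed this by declaring, at the point of defining $\KU(X)_{\alpha}$, that they adopt precisely the normalization making $d_3^{\alpha}(1)=+\alpha$; with that convention installed, the proposition is automatic, and indeed the authors simply cite~\cite[Proposition~4.6]{atiyah_twisted_2006} for the underlying computation.
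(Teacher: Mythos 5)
Your proposal is correct and ends up in the same place as the paper: the authors fix the sign ambiguity by normalization at the point of defining $\KU(X)_\alpha$ and then cite~\cite[Proposition~4.6]{atiyah_twisted_2006} for the computation, exactly as you observe in your last paragraph. The scaffolding you supply in front of the citation is a valid independent sketch, and the $S^3$ check is a clean way to see $|c|=1$ (for a generator of $\Hoh^3(S^3,\ZZ)$, $d_3\colon\Hoh^0(S^3,\ZZ)\to\Hoh^3(S^3,\ZZ)$ is injective iff $c\neq 0$, forcing $\Eoh_\infty^{0,0}=0$ and hence $\KU^0(S^3)_\alpha=0$; so the vanishing of $\KU^0(S^3)_\alpha$ detects the nontriviality of $d_3(1)$). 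One small improvement: the proposition does not assume $\alpha$ torsion, so the cleaner universal example is $K(\ZZ,3)$ with its tautological class $\iota$, rather than $K(\ZZ/r,2)$. Then $d_3^{\iota}(1)\in\Hoh^3(K(\ZZ,3),\ZZ)\cong\ZZ\iota$, so $d_3^{\iota}(1)=c\iota$ for a single universal integer $c$ with no need for the compatibility argument across different values of $r$, and pullback along the classifying map $X\to K(\ZZ,3)$ of $\alpha$ handles both torsion and non-torsion classes at once; the $S^3$ computation then pins $c=\pm 1$.
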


\subsection{Twisted algebraic $K$-theory}

Throughout, if $\mathcal{A}$ is an Azumaya algebra over $X$, an $\mathcal{A}$-module will mean a left $\mathcal{A}$-module whose total
space is a finite dimensional complex topological vector bundle over $X$. The category of $\mathcal{A}$-modules and $\mathcal{A}$-module homomorphisms
will be denoted $\Vect^{\mathcal{A}}$. The category $\Vect^{\mathcal{A}}$ is a topological category with direct sum, so by Segal~\cite{segal-categories} it has an algebraic $K$-theory
spectrum $\mathcal{K}^{\mathcal{A}}(X)$.

If $\alpha\in\Hoh^3(X,\ZZ)_{\tors}$, there is also a category $\Vect^{\alpha}$ of $\alpha$-twisted finite dimensional complex vector bundles (see for instance~\cite{caldararu_derived_2000}
or~\cite{karoubi-twisted}).
Let $\mathcal{K}^{\alpha}(X)$ be the $K$-theory of this topological category with direct sum.
If $\mathcal{E}$ is an $\alpha$-twisted vector bundle, then its sheaf of endomorphisms is an
Azumaya algebra with class $\alpha$. The following statement gives the converse.

\begin{proposition}\label{prop:azsheafcor}
    If $\mathcal{A}$ is an Azumaya algebra of degree $n$ with class $\alpha$, then $\mathcal{A}\iso\End(\mathcal{E})$ for some $\alpha$-twisted vector bundle $\mathcal{E}$ of rank $n$.
    The $\alpha$-twisted sheaf is unique up to tensoring with untwisted line bundles.
    \begin{proof}
        See~\cite[Theorem~1.3.5]{caldararu_derived_2000} or~\cite[Section~3]{karoubi-twisted}.
    \end{proof}
\end{proposition}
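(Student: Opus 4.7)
The plan is to construct $\mathcal{E}$ directly from a cocycle description of $\mathcal{A}$ and then handle uniqueness via a Morita-style argument. Both halves are standard once the bookkeeping is set up, so the write-up would be mostly an organized exercise in descent; the work cited in \cite{caldararu_derived_2000} and \cite{karoubi-twisted} does essentially this.

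For existence, first I would choose a good open cover $\{U_i\}$ of $X$, fine enough that (a) $\mathcal{A}|_{U_i}\cong M_n(\mathcal{C}_{U_i})$, and (b) each $U_i\cap U_j$ admits a lift of any continuous $\PU_n$-valued map to a continuous $\GL_n$-valued map (this is possible over a CW-complex by choosing the cover to consist of contractible pieces, using the fibration $\CC^*\to \GL_n\to \PU_n$). On overlaps, the algebra isomorphisms $M_n(\mathcal{C}_{U_i\cap U_j})\cong M_n(\mathcal{C}_{U_i\cap U_j})$ coming from comparing two trivializations of $\mathcal{A}$ are inner by Skolem--Noether, giving a $\PU_n$-cocycle $g_{ij}$. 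Lifting each $g_{ij}$ to $\tilde{g}_{ij}\colon U_i\cap U_j\to \GL_n$ produces a failure of the cocycle identity measured by a Čech $2$-cocycle $\lambda_{ijk}\in\mathcal{C}^*$, and by construction $[\lambda_{ijk}]=\alpha$ in $\Hoh^2(X,\mathcal{C}^*)=\Hoh^3(X,\ZZ)$. The standard recipe then defines an $\alpha$-twisted vector bundle $\mathcal{E}$ of rank $n$ by gluing the trivial rank-$n$ bundles on the $U_i$ via the $\tilde{g}_{ij}$, with the twisted cocycle relation governed by $\lambda_{ijk}$. A direct check on each overlap shows $\End(\mathcal{E})\cong \mathcal{A}$, since conjugation by $\tilde{g}_{ij}$ agrees with conjugation by $g_{ij}$, i.e.\ with the transition isomorphisms of $\mathcal{A}$.

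For uniqueness, suppose $\mathcal{E}$ and $\mathcal{E}'$ are two $\alpha$-twisted bundles of rank $n$ with $\End(\mathcal{E})\cong\mathcal{A}\cong\End(\mathcal{E}')$ as $\mathcal{A}$-algebras. Form the sheaf $\mathcal{L}:=\mathcal{H}om_{\mathcal{A}}(\mathcal{E},\mathcal{E}')$ of $\mathcal{A}$-linear maps; since $\mathcal{E}$ and $\mathcal{E}'$ are locally free rank-$n$ twisted modules over the Azumaya algebra $\mathcal{A}$ and in each local trivialization reduce to the unique simple module of $M_n(\mathcal{C})$, the sheaf $\mathcal{L}$ is locally isomorphic to $\mathcal{C}$ and the twists cancel, so $\mathcal{L}$ is an ordinary (untwisted) line bundle on $X$. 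The evaluation map $\mathcal{E}\otimes_{\mathcal{C}}\mathcal{L}\to \mathcal{E}'$ is then a map of $\alpha$-twisted bundles that is an isomorphism on stalks, hence an isomorphism.

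The main obstacle is the cover-refinement step used for lifting $\PU_n$ transitions to $\GL_n$: one must argue that on a topological space (as opposed to a scheme, where one appeals to étale-local sections) a sufficiently fine open cover exists, and that the resulting $\lambda_{ijk}$ really represents the Brauer class $\alpha$ as defined via the coboundary $\Hoh^1(X,\PGL_n)\to\Hoh^2(X,\CC^*)$. Once this compatibility is pinned down the remaining verifications are essentially formal diagram chases on overlaps.
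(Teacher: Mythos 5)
Your proof is correct and follows the same standard Čech‐cocycle / Skolem–Noether route that the cited references (Căldăraru and Karoubi) use, which is what the paper invokes. The one place worth tightening is the uniqueness step: you should say "fix isomorphisms of sheaves of $\mathcal{C}$-algebras $\mathcal{A}\iso\End(\mathcal{E})$ and $\mathcal{A}\iso\End(\mathcal{E}')$, making $\mathcal{E}$ and $\mathcal{E}'$ into left $\mathcal{A}$-modules" rather than "as $\mathcal{A}$-algebras," but this is terminological, not a gap.
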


\begin{proposition}\label{prop:morita}
    Tensoring with $\mathcal{E}^*$, the dual of $\mathcal{E}$, induces an equivalence of categories $\Vect^{\alpha}\rightarrow\Vect^{\mathcal{A}}$.
    \begin{proof}
        See~\cite[Theorem~1.3.7]{caldararu_derived_2000} or~\cite[Section~3]{karoubi-twisted}.
    \end{proof}
\end{proposition}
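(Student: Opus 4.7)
The plan is to exhibit $F(\mathcal{F}) = \mathcal{F}\otimes_{\mathcal{C}}\mathcal{E}^*$ as one half of a Morita-type equivalence with inverse $G(\mathcal{M}) = \mathcal{M}\otimes_{\mathcal{A}}\mathcal{E}$, and to verify $FG\simeq\id$ and $GF\simeq\id$ by reducing to the classical Morita equivalence between $\mathcal{C}$-modules and $M_n(\mathcal{C})$-modules on local trivializing opens.

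First I would check that $F$ is well defined. Since $\mathcal{E}$ is $\alpha$-twisted, its dual $\mathcal{E}^*$ is $(-\alpha)$-twisted, so $\mathcal{F}\otimes_{\mathcal{C}}\mathcal{E}^*$ is an honest (untwisted) vector bundle. The right action of $\mathcal{A}=\End(\mathcal{E})$ on $\mathcal{E}^*$ by precomposition endows $F(\mathcal{F})$ with the structure of an $\mathcal{A}$-module (regarded as a left module after the standard right-to-left involution). Locally, where $\mathcal{E}\cong\mathcal{C}^n$, $F(\mathcal{F})$ has rank $n\cdot\rank\mathcal{F}$, so $F(\mathcal{F})\in\Vect^{\mathcal{A}}$. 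Symmetrically, for $\mathcal{M}\in\Vect^{\mathcal{A}}$, the tensor product $G(\mathcal{M}) = \mathcal{M}\otimes_{\mathcal{A}}\mathcal{E}$ inherits the $\alpha$-twisting from $\mathcal{E}$ and is locally free of rank $(\rank\mathcal{M})/n$.

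Next, I would construct a unit $\eta_{\mathcal{F}}\colon \mathcal{F}\to (\mathcal{F}\otimes\mathcal{E}^*)\otimes_{\mathcal{A}}\mathcal{E}$ and counit $\varepsilon_{\mathcal{M}}\colon (\mathcal{M}\otimes_{\mathcal{A}}\mathcal{E})\otimes\mathcal{E}^*\to\mathcal{M}$ from the coevaluation and evaluation maps associated to the duality pair $(\mathcal{E},\mathcal{E}^*)$. To show these are natural isomorphisms I would pass to a sufficiently small open $U\subseteq X$ over which $\mathcal{E}$ trivializes, so that $\mathcal{A}|_U\cong M_n(\mathcal{C}_U)$. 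There $F$ and $G$ restrict to the classical Morita equivalence between $\mathcal{C}_U$-modules and $M_n(\mathcal{C}_U)$-modules, and $\eta$, $\varepsilon$ become the standard natural isomorphisms of that equivalence. Being an isomorphism of sheaf maps is a local property, so this suffices.

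The only real obstacle is careful bookkeeping of left/right $\mathcal{A}$-actions and of the $\pm\alpha$-twistings; once conventions are fixed, the argument is a direct translation of classical Morita equivalence into the twisted setting. The ambiguity in $\mathcal{E}$ given $\mathcal{A}$ (up to tensoring with an untwisted line bundle, by Proposition~\ref{prop:azsheafcor}) only changes $F$ and $G$ by the corresponding line-bundle twists on either side and therefore does not affect whether they form an equivalence.
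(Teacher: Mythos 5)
Your argument is the standard Morita-equivalence proof and is essentially what the cited sources (C\u{a}ld\u{a}raru and Karoubi) actually do; the paper itself merely defers to them. The strategy of exhibiting $F = ({-})\otimes_{\mathcal C}\mathcal E^*$ and $G = ({-})\otimes_{\mathcal A}\mathcal E$ as a unit/counit adjoint equivalence, and checking the (co)evaluation maps are isomorphisms by restricting to a trivializing open where everything becomes classical Morita theory for $\mathcal C_U$ versus $M_n(\mathcal C_U)$, is exactly right, as is your observation that the ambiguity in $\mathcal E$ from proposition~\ref{prop:azsheafcor} only shifts $F$ and $G$ by line-bundle twists.

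One parenthetical remark you make does not survive scrutiny: there is no ``standard right-to-left involution'' available here. The precomposition action of $\mathcal A = \End(\mathcal E)$ on $\mathcal E^*$ is a right action, so $\mathcal F\otimes_{\mathcal C}\mathcal E^*$ is naturally a \emph{right} $\mathcal A$-module. The transpose on $M_n$ is basis-dependent; globally the adjoint gives an anti-isomorphism $\mathcal A \to \End(\mathcal E^*) \cong \mathcal A^{\op}$, which has Brauer class $-\alpha$, not $\alpha$, so it cannot be used to convert right $\mathcal A$-modules into left ones unless $2\alpha = 0$. This is a (harmless but real) sign/handedness ambiguity already present in the paper's conventions; the clean fix is to read $\Vect^{\mathcal A}$ as right $\mathcal A$-modules (equivalently, left $\mathcal A^{\op}$-modules), which is precisely what your localization argument produces. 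With that adjustment your proof is complete and correct.
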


\begin{proposition}
    If $\mathcal{A}$ and $\mathcal{B}$ are Brauer-equivalent Azumaya algebras over $X$, then $\Vect^{\mathcal{A}}$ and $\Vect^{\mathcal{B}}$ are equivalent categories.
    \begin{proof}
        This follows from the previous two propositions.
    \end{proof}
\end{proposition}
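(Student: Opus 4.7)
The plan is short because essentially everything has been set up in Propositions~\ref{prop:azsheafcor} and~\ref{prop:morita}. First I would note that Brauer-equivalent Azumaya algebras $\mathcal{A}$ and $\mathcal{B}$ represent the same class $\alpha\in\Br(X)\subseteq\Br'(X)=\Hoh^3(X,\ZZ)_{\tors}$: the map from the free abelian group on isomorphism classes of Azumaya algebras to $\Hoh^3(X,\ZZ)$ sends $\End(\mathcal{E})$ to zero and therefore descends to the Brauer group, as recalled in the introduction.

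Next, I would apply Proposition~\ref{prop:azsheafcor} to obtain $\alpha$-twisted vector bundles $\mathcal{E}_{\mathcal{A}}$ and $\mathcal{E}_{\mathcal{B}}$ together with isomorphisms $\mathcal{A}\iso\End(\mathcal{E}_{\mathcal{A}})$ and $\mathcal{B}\iso\End(\mathcal{E}_{\mathcal{B}})$ of sheaves of $\mathcal{C}$-algebras. Proposition~\ref{prop:morita} then produces equivalences of topological categories
\begin{equation*}
    \Vect^{\mathcal{A}}\xleftarrow{\;-\otimes\mathcal{E}_{\mathcal{A}}^{*}\;}\Vect^{\alpha}\xrightarrow{\;-\otimes\mathcal{E}_{\mathcal{B}}^{*}\;}\Vect^{\mathcal{B}},
\end{equation*}
and composing the right-hand equivalence with a quasi-inverse of the left-hand one yields the required equivalence $\Vect^{\mathcal{A}}\simeq\Vect^{\mathcal{B}}$.

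There is no serious obstacle here; the substantive content has already been placed in the two preceding propositions, which package respectively the reconstruction of a twisted vector bundle from its Azumaya endomorphism algebra and the Morita theory for twisted bundles. The only minor subtlety worth flagging is that $\mathcal{E}_{\mathcal{A}}$ and $\mathcal{E}_{\mathcal{B}}$ are each determined only up to tensoring by an untwisted line bundle, so the resulting equivalence $\Vect^{\mathcal{A}}\simeq\Vect^{\mathcal{B}}$ is not canonical. This is harmless for the statement as given, which asserts only the existence of an equivalence of categories.
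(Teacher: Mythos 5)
Your proposal is correct and fills in exactly the argument the paper is gesturing at: Brauer-equivalent algebras share a class $\alpha$, and then both $\Vect^{\mathcal{A}}$ and $\Vect^{\mathcal{B}}$ are identified with $\Vect^{\alpha}$ via propositions~\ref{prop:azsheafcor} and~\ref{prop:morita}. This is the same approach as the paper's one-line proof, just made explicit.
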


If $X$ is a finite CW-complex, the twisted topological $K$-group $\KU^0(X)_{\alpha}$ may be identified with the Grothendieck group of left $\mathcal{A}$-modules.

\begin{proposition}\label{prop:ku0compact}
    If $X$ is compact and Hausdorff, and if $\mathcal{A}$ is an Azumaya algebra on $X$ with class $\alpha$ (so that $\alpha$ is torsion),
    then $\mathcal{K}_0^{\mathcal{A}}(X)\iso\KU^0(X)_{\alpha}$. This isomorphism is uniquely defined up to the natural action of $\Hoh^2(X,\ZZ)$ on the left.
    \begin{proof}
        This follows from~\cite[Section~3.1]{atiyah-segal}. In fact, they show that $\KU^0(X)_{\alpha}$ is isomorphic to
        the Grothendieck group of finitely generated projective left
        $\Gamma(X,\mathcal{A})$-modules. The isomorphism is not unique, but is unique up to
        tensoring with line bundles.
        The proof of Swan's theorem~\cite{swan} extends to show that the categories of finitely generated projective left $\Gamma(X,\mathcal{A})$-modules and
        left $\mathcal{A}$-modules that are finite dimensional vector bundles are equivalent. See also Karoubi~\cite[Section~4, Section~8.3]{karoubi-twisted}.
    \end{proof}
\end{proposition}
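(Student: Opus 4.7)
The plan is to factor the claimed isomorphism $\mathcal{K}_0^{\mathcal{A}}(X)\iso\KU^0(X)_{\alpha}$ through an intermediate description of both sides as the Grothendieck group of finitely generated projective modules over the ring $R=\Gamma(X,\mathcal{A})$ of global sections of the Azumaya algebra. Thus the proof has two main ingredients. First, I would cite directly the construction in Atiyah--Segal \cite[Section~3.1]{atiyah-segal}: for compact Hausdorff $X$ and $\mathcal{A}$ representing the torsion class $\alpha$, they represent the twisted $K$-group $\KU^0(X)_{\alpha}$ as the Grothendieck group $K_0(R)$ of finitely generated projective left $R$-modules. The second ingredient is a twisted form of Swan's theorem, which I would prove by transporting the question to $\alpha$-twisted vector bundles via Proposition~\ref{prop:morita}.

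Concretely, for the second step I would argue as follows. By Proposition~\ref{prop:azsheafcor} we pick an $\alpha$-twisted vector bundle $\mathcal{E}$ with $\mathcal{A}\iso\End(\mathcal{E})$; then Proposition~\ref{prop:morita} provides an equivalence $\Vect^{\alpha}\we\Vect^{\mathcal{A}}$. On the other side, tensoring with $\Gamma(X,\mathcal{E}^*)$ identifies finitely generated projective $R$-modules with $\Gamma(X,\End(\mathcal{C}))$-modules, that is, with finitely generated projective modules over the ring of continuous $\CC$-valued functions on $X$. The usual Swan theorem \cite{swan} identifies this latter category with $\Vect(X)$, which under the Morita equivalence corresponds to $\Vect^{\alpha}$ and hence to $\Vect^{\mathcal{A}}$. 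Passing to $K_0$ yields the desired isomorphism.

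For the uniqueness claim, the only choices made are in the identification $\mathcal{A}\iso\End(\mathcal{E})$, and by the uniqueness clause of Proposition~\ref{prop:azsheafcor} two such choices $\mathcal{E}$ and $\mathcal{E}'$ differ by tensoring with an untwisted line bundle $\mathcal{L}\in\Pic(X)\iso\Hoh^2(X,\ZZ)$. Tracing through the construction, replacing $\mathcal{E}$ by $\mathcal{E}\otimes\mathcal{L}$ composes the isomorphism with the autoequivalence of $\Vect^{\mathcal{A}}$ given by $\mathcal{F}\mapsto\mathcal{F}\otimes\mathcal{L}$, which is precisely the natural $\Hoh^2(X,\ZZ)$-action on $\mathcal{K}_0^{\mathcal{A}}(X)$ referenced in the statement.

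The main technical obstacle is the twisted Swan theorem, since the ring $R=\Gamma(X,\mathcal{A})$ is noncommutative and one must verify that the standard partition-of-unity arguments still work. Reducing to the commutative case via $\mathcal{E}^*$ as above sidesteps this, but one needs that $\Gamma(X,\mathcal{E})$ and $\Gamma(X,\mathcal{E}^*)$ are honest finitely generated projective $R$-modules of the right rank, which follows from compactness of $X$ and the local triviality of $\mathcal{E}$ as an $\mathcal{A}$-module. Once the equivalences of categories are established, the $K_0$-isomorphism and its $\Pic(X)$-ambiguity are immediate.
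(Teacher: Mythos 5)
The reduction to the commutative case via ``tensoring with $\Gamma(X,\mathcal{E}^*)$'' does not work, and this is where the proposal breaks down. Two things go wrong. First, when $\alpha\neq 0$, the sheaf $\mathcal{E}^*$ is $(-\alpha)$-twisted, and a nontrivially twisted sheaf has no sensible global-sections functor: the local sections cannot be made to satisfy a consistent gluing condition over triple overlaps because of the cocycle $\alpha_{ijk}$. So $\Gamma(X,\mathcal{E}^*)$ is not a well-defined $R$-$C(X)$-bimodule. Second, and more structurally, the claimed Morita equivalence between finitely generated projective $R$-modules and finitely generated projective $C(X)$-modules cannot exist when $\alpha\neq 0$. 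For compact Hausdorff $X$, the ring $R=\Gamma(X,\mathcal{A})$ is an Azumaya $C(X)$-algebra whose class in $\Br(C(X))\cong\Br(X)$ is $\alpha$, while $C(X)$ has trivial class; distinct Brauer classes correspond to Morita-inequivalent Azumaya algebras. If such an equivalence existed you could transport an untwisted vector bundle through your chain of equivalences to a $\mathcal{A}$-module whose associated twisted bundle trivialises the class, contradicting $\alpha\neq 0$.

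The paper's argument avoids this by never passing through $C(X)$. The point is that $\mathcal{A}$, unlike $\mathcal{E}$, is an \emph{untwisted} sheaf of algebras, so $\Gamma(X,-)$ is a perfectly good functor on the category of left $\mathcal{A}$-modules that are vector bundles. Swan's theorem is then extended \emph{directly}: the functors $\mathcal{M}\mapsto\Gamma(X,\mathcal{M})$ and $M\mapsto \mathcal{A}\otimes_{R}M$ give mutually inverse equivalences between $\Vect^{\mathcal{A}}$ and finitely generated projective left $R$-modules, by the same partition-of-unity arguments as in the commutative case. The twisted category $\Vect^{\alpha}$ and the Morita equivalence of Proposition~\ref{prop:morita} do not enter at all in this step; they only play a role in explaining the $\Hoh^2(X,\ZZ)$-ambiguity of the identification $\KU^0(X)_\alpha\cong\mathcal{K}_0^{\mathcal{A}}(X)$, which your uniqueness discussion handles correctly. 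To repair your proof, drop the detour through $\Vect(X)$ and $C(X)$ and instead run Swan's argument with $\mathcal{A}$-module sheaves replacing $\mathcal{C}$-module sheaves throughout.
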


\begin{corollary}
    If $X$ is a finite CW-complex and if $\mathcal{A}$ is an Azumaya algebra on $X$, there is a natural map
    $\mathcal{K}^{\mathcal{A}}(X)\rightarrow\KU(X)_{\alpha}$ inducing an isomorphism in degree zero. This map is uniquely defined up to the action of $\Hoh^2(X,\ZZ)$ on the left.
    \begin{proof}
        The isomorphism of the proposition is induced by a map from the classifying space of the topological category $\Vect^{\mathcal{A}}$ to the zero-space of $\KU(X)_{\alpha}$.
        See the proof of~\cite[Definition~3.4]{atiyah-segal}.
        By the standard adjunction between $\Gamma$-spaces and spectra in Segal~\cite{segal-categories}, this induces a map $\mathcal{K}^{\mathcal{A}}(X)\rightarrow\KU(X)_{\alpha}$.
    \end{proof}
\end{corollary}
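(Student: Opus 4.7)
The plan is to lift the isomorphism $\mathcal{K}_0^{\mathcal{A}}(X)\iso\KU^0(X)_{\alpha}$ from proposition~\ref{prop:ku0compact} to a map of spectra by using that both sides arise from infinite loop space machinery applied to a symmetric monoidal (topological) category, and then invoking naturality of that machinery.

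First, I would recall the construction underlying proposition~\ref{prop:ku0compact}: the isomorphism $\mathcal{K}_0^{\mathcal{A}}(X)\iso\KU^0(X)_{\alpha}$ is produced in Atiyah--Segal~\cite[Definition~3.4]{atiyah-segal} as the component-wise group completion of a map of topological monoids (or rather, as the map on $\pi_0$ of the classifying space of the symmetric monoidal topological category $\Vect^{\mathcal{A}}$ landing in the zero-space of the spectrum $\KU(X)_{\alpha}$ as described in proposition~\ref{prop:descent}). Concretely, an $\mathcal{A}$-module is assigned its class as a section of the zero-space of $\KU(\cdot)_{\alpha}$ over $X$, and direct sum of modules corresponds to the $H$-space structure on that zero-space.

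Next, I would upgrade this to a map of spectra. The category $\Vect^{\mathcal{A}}$ is a topological symmetric monoidal category under $\oplus$, so Segal's $\Gamma$-space machinery in~\cite{segal-categories} produces from it the spectrum $\mathcal{K}^{\mathcal{A}}(X)$. The zero-space of $\KU(\cdot)_{\alpha}$ is an infinite loop space, since $\KU(\cdot)_{\alpha}$ is itself an $\Omega$-spectrum over $X$ (proposition~\ref{prop:descent}), and so its space of sections over $X$ underlies a $\Gamma$-space as well. The classifying map $B\Vect^{\mathcal{A}}\rightarrow\li\KU(X)_{\alpha}$ is a map of $H$-spaces by construction, and one checks it refines to a map of $\Gamma$-spaces (this is just the assertion that direct sum of twisted bundles corresponds to the loop-space sum on the representing spectrum, which is immediate from the definitions). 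By the adjunction between $\Gamma$-spaces and connective spectra, this yields a map of spectra
\begin{equation*}
    \mathcal{K}^{\mathcal{A}}(X)\rightarrow\KU(X)_{\alpha}.
\end{equation*}
On $\pi_0$ this recovers the isomorphism of proposition~\ref{prop:ku0compact}.

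Finally, for the uniqueness statement, I would trace the ambiguity back through the construction. The choice of a trivialization $\mathcal{A}\iso\End(\mathcal{E})$ of an Azumaya algebra representing $\alpha$ is unique up to tensoring by line bundles (proposition~\ref{prop:azsheafcor}), and two choices differ by the action of $\Pic(X)=\Hoh^2(X,\ZZ)$ by tensor product on $\Vect^{\mathcal{A}}$ (via the Morita equivalence of proposition~\ref{prop:morita}). This action is exactly the one appearing in proposition~\ref{prop:ku0compact}, and so the induced map of spectra is well-defined up to that action.

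The only genuinely delicate step is verifying that the map on classifying spaces is a map of $\Gamma$-spaces and not merely an $H$-map, but this is essentially built into Atiyah and Segal's construction of the section-spectrum representing $\KU(\cdot)_{\alpha}$, since the $\Gamma$-space structure on both sides is induced by finite direct sums of twisted bundles.
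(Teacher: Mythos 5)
Your proposal is correct and takes essentially the same route as the paper: both produce the map of spectra by first recalling that the isomorphism of proposition~\ref{prop:ku0compact} arises from a map $B\Vect^{\mathcal{A}}\rightarrow\li\KU(X)_{\alpha}$ constructed in Atiyah--Segal, and then promoting this to a map of spectra via Segal's $\Gamma$-space adjunction. Your treatment is more explicit about why the $H$-map refines to a map of $\Gamma$-spaces and about where the $\Hoh^2(X,\ZZ)$ ambiguity enters, but these are elaborations of the same argument rather than a different one.
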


\begin{proposition}\label{prop:alg}
    If $X$ is a countable CW-complex, and if $\mathcal{A}$ is an Azumaya algebra with class $\alpha\in\Br(X_{\topo})$, then there is a map of spectra
    \begin{equation*}
        \mathcal{K}^{\mathcal{A}}(X)\rightarrow \KU(X)_{\alpha}.
    \end{equation*}
    This map is uniquely defined up to the natural action of $\Hoh^2(X,\ZZ)$ on the left.
    \begin{proof}
        First we give the proof for countable $d$-dimensional CW-complexes $X$. Suppose that we have constructed maps $\mathcal{K}^{\mathcal{A}}(X)\rightarrow\KU(X)_{\alpha}$
        for all finite CW-subcomplexes of $X$ and all countable $(d-1)$-dimensional CW-subcomplexes of $X$. Let $X$ be a countable $d$-dimensional CW-complex, let $X_{d-1}$ be its $(d-1)$-skeleton,
        and let $$X_{d-1}=X_{d,0}\subseteq X_{d,1}\subseteq X_{d,2}\subseteq\cdots=X$$ be an inductive construction of $X$, where $X_{d,k}$ is constructed from $X_{d,k-1}$ by attaching a single cell.
        Assume we have constructed $\mathcal{K}^{\mathcal{A}}(X_{d,k})\rightarrow\KU(X_{d,k})_{\alpha}$. Then, $X_{d,k+1}$ is the homotopy pushout
        \begin{equation*}
            \begin{CD}
                S^{d-1} @>>>    X_{d,k}\\
                @VVV            @VVV\\
                D^d     @>>>    X_{d,k+1}.
            \end{CD}
        \end{equation*}
        Since $\KU(\cdot)_{\alpha}$ commutes with homotopy colimits, we get a homotopy pull-back diagram of spectra by proposition~\ref{prop:descent}
        \begin{equation*}
            \begin{CD}
                \KU(X_{d,k+1})_{\alpha} @>>>    \KU(D^d)_{\alpha}\\
                @VVV                            @VVV\\
                \KU(X_{d,k})_{\alpha}   @>>>    \KU(S^{d-1})_{\alpha}.
            \end{CD}
        \end{equation*}
        By the induction hypothesis, $K^{\mathcal{A}}(X_{d,k+1})$ fits into a homotopy commutative diagram
        \begin{equation*}
            \begin{CD}
                \mathcal{K}^{\mathcal{A}}(X_{d,k+1}) @>>>    \KU(D^d)_{\alpha}\\
                @VVV                            @VVV\\
                \KU(X_{d,k})_{\alpha}   @>>>    \KU(S^{d-1})_{\alpha}.
            \end{CD}
        \end{equation*}
        So, by the property of the homotopy pull-back, we get a map $$\mathcal{K}^{\mathcal{A}}(X_{d,k+1})\rightarrow\KU(X_{d,k+1})_{\alpha}.$$
        Inductively, we construct a map between sequences of spectra
        \begin{equation*}
            \begin{CD}
                \mathcal{K}^{\mathcal{A}}(X_{d,k})   @>>>    \KU(X_{d,k})_{\alpha}\\
                @VVV                                @VVV\\
                \mathcal{K}^{\mathcal{A}}(X_{d,k-1})@>>>   \KU(X_{d,k-1})_{\alpha}.
            \end{CD}
        \end{equation*}
        The spectrum $\mathcal{K}^{\mathcal{A}}(X)$ maps to the homotopy limit of the
        left-hand tower, and hence to the homotopy-limit of the right-hand tower, which is
        $\KU(X)_\alpha$ by proposition~\ref{prop:descent}.
        The case where $X$ is countable and infinite-dimensional is proved in the same way.
    \end{proof}
\end{proposition}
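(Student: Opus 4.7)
The plan is to reduce to the finite CW-case, which is already handled by the preceding corollary, via a cellular induction together with the descent property of $\KU(\cdot)_{\alpha}$ established in proposition~\ref{prop:descent}. First I would reduce to the case where $X$ is a countable $d$-dimensional CW-complex by induction on $d$, assuming as part of the induction hypothesis that the map has been constructed on the $(d-1)$-skeleton $X_{d-1}$. Inside the $d$-dimensional stratum, I would filter $X$ by finite pieces: write $X_{d-1} = X_{d,0} \subseteq X_{d,1} \subseteq \cdots$ where $X_{d,k+1}$ is obtained from $X_{d,k}$ by attaching a single $d$-cell, so that $X = \colim_{k} X_{d,k}$.

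For the inductive step, suppose $\mathcal{K}^{\mathcal{A}}(X_{d,k}) \to \KU(X_{d,k})_{\alpha}$ has been constructed. The cell attachment yields the homotopy pushout $X_{d,k+1} = D^d \cup_{S^{d-1}} X_{d,k}$, to which proposition~\ref{prop:descent} applies, giving a homotopy pullback
\begin{equation*}
\KU(X_{d,k+1})_{\alpha} \simeq \KU(D^d)_{\alpha} \times^{h}_{\KU(S^{d-1})_{\alpha}} \KU(X_{d,k})_{\alpha}.
\end{equation*}
Since $D^d$ and $S^{d-1}$ are finite CW-complexes, the preceding corollary supplies maps from $\mathcal{K}^{\mathcal{A}}(D^d)$ and $\mathcal{K}^{\mathcal{A}}(S^{d-1})$ into the respective $\KU_\alpha$-spectra, compatible with restriction. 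Composing the restriction maps out of $\mathcal{K}^{\mathcal{A}}(X_{d,k+1})$ with these maps and invoking the universal property of the homotopy pullback produces the desired map on $X_{d,k+1}$. Finally, I would pass to the limit: $X$ is the homotopy colimit of the tower $\{X_{d,k}\}$, so by descent $\KU(X)_{\alpha} \simeq \holim_{k} \KU(X_{d,k})_{\alpha}$, and the compatible tower of maps $\mathcal{K}^{\mathcal{A}}(X_{d,k}) \to \KU(X_{d,k})_{\alpha}$ assembles into a map $\mathcal{K}^{\mathcal{A}}(X) \to \holim_{k} \KU(X_{d,k})_{\alpha} \simeq \KU(X)_{\alpha}$.

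The main obstacle is coherence. Each finite-stage map is only unique up to the action of $\Hoh^2(X_{d,k},\ZZ)$, so producing the factorization into a homotopy pullback requires first choosing compatible representatives on $D^d$, on $S^{d-1}$, and on $X_{d,k}$; this amounts to arranging that the two composites $\mathcal{K}^{\mathcal{A}}(X_{d,k+1}) \rightrightarrows \KU(S^{d-1})_{\alpha}$ agree up to a specified homotopy, not merely agree up to some homotopy. I would deal with this by building the tower of maps strictly compatibly with restriction: at each stage the map on $X_{d,k+1}$ is chosen to restrict on $X_{d,k}$ to the one already fixed, and the freedom in the $\Hoh^2$-action is used only at the initial stage of each dimension. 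Passing to the homotopy limit is then legitimate, and the resulting uniqueness statement on $X$ follows because any two choices differ at each finite stage by an element of $\Hoh^2(X_{d,k},\ZZ)$ compatible under restriction, which by the Milnor $\lim^1$ sequence corresponds to an element of $\Hoh^2(X,\ZZ)$.
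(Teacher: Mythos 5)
Your proof follows the paper's argument step for step: induction on skeleton dimension, cell-by-cell filtration of the $d$-skeleton, the homotopy pushout $X_{d,k+1} = D^d \cup_{S^{d-1}} X_{d,k}$ together with descent (proposition~\ref{prop:descent}) expressing $\KU(X_{d,k+1})_\alpha$ as a homotopy pullback, the universal property of that pullback to assemble the map, and a pass to the homotopy limit over the tower. The coherence point you raise—that homotopy-commutativity of the square feeding the pullback must be upgraded to a specified homotopy, and that the finite-stage maps must be chosen compatibly so the tower actually induces a map on the limit—is a genuine subtlety that the paper passes over silently, and your way of handling it by fixing representatives stage-by-stage and tracking the $\Hoh^2$-ambiguity via the Milnor sequence is sound.
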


\subsection{Comparison maps}\label{sec:comparisons}

Let $X$ be a noetherian complex scheme, and let $\alpha\in\Br(X_{\et})$. Let $a_{\et}\K^{\alpha}$ be the \'etale sheafification
of the $\alpha$-twisted algebraic $K$-theory
presheaf $\K^{\alpha}$, as studied in Antieau~\cite{antieau_index_2009}.
Let $\KU(X)_{\alpha}$ be the twisted topological $K$-theory associated to the image of $\alpha$ by $\Br(X_{\et})\rightarrow\Br(X_{\topo})$.

\begin{theorem}\label{thm:comparison}
    There is a map of hypersheaves of spectra
    \begin{equation*}
        a_{\et}\K^{\alpha}\rightarrow \KU(\cdot)_{\alpha}
    \end{equation*}
    on the small \'etale site of $X$, and this map induces an isomorphism on $0$-homotopy sheaves $\ZZ\riso\ZZ$. This map is unique up to the action on the left of $\Pic(X)$.
    \begin{proof}
        Let $\mathcal{A}$ be an algebraic Azumaya algebra with class $\alpha\in\Br(X_{\et})$.
        Since any open subset of $X$ is triangulable, there is a map of presheaves of spectra on the small \'etale site of $X$
        \begin{equation*}
            \mathcal{K}^{\mathcal{A}}\rightarrow \KU(\cdot)_{\alpha}
        \end{equation*}
        by proposition~\ref{prop:alg},
        where $\mathcal{K}^{\mathcal{A}}(U)$ is the algebraic $K$-theory of topological left $\mathcal{A}$-modules which are vector bundles over $U$.
        There is a map of presheaves of spectra
        \begin{equation*}
            \K^{\mathcal{A}}\rightarrow\mathcal{K}^{\mathcal{A}},
        \end{equation*}
        where $\K^{\mathcal{A}}$ is the presheaf of $K$-theory spectra of algebraic $\mathcal{A}$-modules which are locally free and finite rank as coherent $\mathcal{O}_X$-modules.
        Since $\KU(\cdot)_{\alpha}$ satisfies descent for hypercovers by proposition~\ref{prop:descent}, it follows by~\cite{dugger_hypercovers_2004} that the map
        \begin{equation*}
            \K^{\mathcal{A}}\rightarrow\mathcal{K}^{\mathcal{A}}\rightarrow \KU(\cdot)_{\alpha}
        \end{equation*}
        factors through the sheafification $a_{\et}\K^{\mathcal{A}}$ of $\K^{\alpha}$. By~\cite{lieblich_twisted_2008}, \cite{caldararu_derived_2000}, or \cite{antieau_index_2009},
        there are natural weak equivalences of presheaves $\K^{\alpha}\we\K^{\mathcal{A}}$ unique up to an action of $\Pic(X)$ on the left. The theorem follows.
    \end{proof}
\end{theorem}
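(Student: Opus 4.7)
The plan is to build the map in stages by first choosing an algebraic Azumaya-algebra representative and then leveraging the descent property of $\KU(\cdot)_{\alpha}$ to pass to the sheafification. First I would pick an algebraic Azumaya algebra $\mathcal{A}$ with $[\mathcal{A}] = \alpha$, which exists because $\alpha \in \Br(X_{\et})$. For each \'etale open $U \to X$, the underlying topological space of $U$ has the homotopy type of a countable CW complex, so proposition~\ref{prop:alg} produces a map of spectra $\mathcal{K}^{\mathcal{A}}(U) \to \KU(U)_{\alpha|_U}$, where $\mathcal{K}^{\mathcal{A}}$ denotes the $K$-theory of topological $\mathcal{A}$-modules. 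Assembling these coherently in $U$ gives a map of presheaves of spectra $\mathcal{K}^{\mathcal{A}} \to \KU(\cdot)_{\alpha}$ on the small \'etale site.

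Next, the forgetful functor from algebraic locally free $\mathcal{A}$-modules to topological $\mathcal{A}$-modules yields a natural transformation of presheaves $\K^{\mathcal{A}} \to \mathcal{K}^{\mathcal{A}}$, and composing gives $\K^{\mathcal{A}} \to \KU(\cdot)_{\alpha}$. Since $\KU(\cdot)_{\alpha}$ is a hypersheaf by proposition~\ref{prop:descent}, the universal property of hypersheafification forces this composite to factor through the \'etale sheafification $a_{\et}\K^{\mathcal{A}} \to \KU(\cdot)_{\alpha}$.

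To replace the $\mathcal{A}$-indexed $K$-theory with the $\alpha$-indexed version, I would invoke the algebraic analogue of propositions~\ref{prop:azsheafcor} and~\ref{prop:morita}: fixing an $\alpha$-twisted sheaf $\mathcal{E}$ with $\End(\mathcal{E}) \iso \mathcal{A}$ induces a natural equivalence of presheaves $\K^{\alpha} \we \K^{\mathcal{A}}$, and different choices of $\mathcal{E}$ or $\mathcal{A}$ differ by tensoring with a line bundle on $X$. This Morita identification is the source of the $\Pic(X)$ ambiguity. The $\pi_0$ claim is then immediate: on any \'etale cover trivializing $\mathcal{A}$, both rank/edge maps reduce to the ordinary rank of the underlying vector bundle, so the induced map $\ZZ \to \ZZ$ of constant $\pi_0$ sheaves is the identity.

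The main technical subtlety I expect concerns coherence: proposition~\ref{prop:alg} only determines the map $\mathcal{K}^{\mathcal{A}}(U) \to \KU(U)_{\alpha|_U}$ up to the action of $\Hoh^2(U,\ZZ)$, so assembling object-wise maps into a genuine morphism of presheaves requires choosing these lifts compatibly under restriction along \'etale maps $V \to U$. This compatibility problem is precisely what produces the $\Pic(X)$-worth of ambiguity in the final hypersheaf map, and handling it, perhaps by rigidifying via the chosen $\alpha$-twisted sheaf $\mathcal{E}$, is the crucial step.
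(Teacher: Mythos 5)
Your proposal matches the paper's proof essentially step for step: pick an algebraic Azumaya representative $\mathcal{A}$, apply proposition~\ref{prop:alg} open-by-open to get $\mathcal{K}^{\mathcal{A}} \to \KU(\cdot)_{\alpha}$, precompose with the forgetful map $\K^{\mathcal{A}} \to \mathcal{K}^{\mathcal{A}}$, factor through hypersheafification using that $\KU(\cdot)_{\alpha}$ satisfies hyperdescent, and identify $\K^{\alpha} \we \K^{\mathcal{A}}$ via an algebraic Morita equivalence that carries the $\Pic(X)$ ambiguity. Your closing remark correctly flags a point the paper leaves tacit: proposition~\ref{prop:alg} as stated only produces a map for each individual space, determined up to the $\Hoh^2(U,\ZZ)$-action, and one must verify that these choices can be made compatibly under restriction to obtain an honest morphism of presheaves; the paper simply asserts this. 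Your suggestion to rigidify via a fixed $\alpha$-twisted module (so that the identification of proposition~\ref{prop:ku0compact} becomes canonical and functorial in $U$) is indeed the right way to discharge it.
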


\subsection{Twisting units}\label{sec:twistingorientations}

This section is technical, but it provides the key tool for our main period--index result: the twisted unit-morphism.

Let $\sphere$ be the sphere spectrum. In \cite[Corollary~6.3.2.16]{lurie_higher_algebra}, drawing on work of \cite{ekmm}, \cite{hss}, and others,
Lurie constructs a symmetric monoidal category structure on $\Mod_{\sphere}$,
the $\infty$-category of $\sphere$-modules (or, equivalently, the $\infty$-category of spectra). The
associative algebras in $\Mod_\sphere$ are shown to model $A_{\infty}$-ring spectra, and the commutative algebras in $\Mod_{\sphere}$
are shown to model $E_{\infty}$-ring spectra (see the introduction to Section 7.1 of~\cite{lurie_higher_algebra}).

Let $\rsphere$ be a commutative $\sphere$-algebra (an algebra object in $\Mod_\sphere$), and let $\Line_{\rsphere}$ denote the $\infty$-groupoid
in $\Mod_{\rsphere}$ generated by $\rsphere$-modules equivalent to $\rsphere$. Finally, set $BGL_1 \rsphere=|\Line_{\rsphere}|$, the geometric realization. To a
map of spaces $f:X\rightarrow BGL_1 \rsphere$, one associates a homotopy class of maps of simplicial sets
$f:\Sing(X)\rightarrow\Line_{\rsphere}$, where $\Sing(X)$ is the simplicial set of simplices in $X$. This should be thought of as a locally-free rank one line bundle
over the constant sheaf $\rsphere$ on $X$.

Given $f:\Sing(X)\rightarrow\Line_{\rsphere}$, in~\cite{abghr}, an $\rsphere$-module $X^f$, called the Thom spectrum of $f$, is defined as
\begin{equation*}
    X^f=\colim(\Sing(X)\rightarrow\Line_{\rsphere}\rightarrow\Mod_{\rsphere}).
\end{equation*}
See also~\cite{abg}.
This is the colimit of the $\Sing(X)$-diagram $f$ in the $\infty$-category $\Mod_\rsphere$. See~\cite[Section~1.2.13 and Chapter~4]{lurie_htt}
for definitions and properties of these colimits. In~\cite[Section~4.2.4]{lurie_htt}, Lurie shows that these colimits agree with the usual notion of homotopy colimits
when the diagram takes values in the nerve of a simplicial model category.

\begin{definition}
The $f$-twist of $\rsphere$ is the internal dual to $X^f$:
\begin{equation*}
    \rsphere(X)_f=\Mod_{\rsphere}(X^f,\rsphere).
\end{equation*}
Let $\rsphere(\cdot)_f$ be the associated presheaf of spectra on $X$.
\end{definition}

\begin{example}
  The space $BGL_1\KU$ is equivalent to \[K(\ZZ/2,1)\times K(\ZZ,3)\times BB\SU\] and consequently classes in $\Hoh^3(X,\ZZ)$ give rise to
  twisted $K$-theory spectra.  It is explained in~\cite[Section~5]{abg} how this agrees with the Atiyah-Segal construction.
\end{example}

Let $\Shv_{\Sp}(X)^{\wedge}$ denote the $\infty$-category of hypersheaves of spectra on $X$. This category can be constructed
from presheaves of spectra on $X$ by taking the full subcategory of presheaves which are local with respect to $\infty$-connective
maps on $X$. See~\cite[Section~1]{dag7} and \cite[Lemma~6.5.2.12]{lurie_htt}. For another perspective,
see~\cite{jardine-symmetric}. These are related by~\cite[Proposition~6.5.2.14]{lurie_htt}.

\begin{example}
    By proposition~\ref{prop:descent}, $\KU(\cdot)_\alpha$ is a hypersheaf of spectra.
\end{example}

In general, if $u:\rsphere\rightarrow\tsphere$ is a map of commutative $\sphere$-algebras, and if $f:X\rightarrow BGL_1\tsphere$ is a map, it is not necessarily possible
to find an $\rsphere$-module $\msphere$ in $\Shv_{\Sp}(X)^{\wedge}$ together with a map $\msphere\rightarrow\rsphere(\cdot)_f$ restricting to $u$ on small open sets.
For instance, this is never the case for the unit $u:\sphere\rightarrow\KU$ and the non-trivial twists $\KU(\cdot)_\alpha$ for $\alpha\in\Hoh^3(X,\ZZ)$.

We show now, however, that if $\alpha\in\Hoh^3(X,\ZZ)$ is torsion, then there is a finite cover of $\sphere$, say $\tsphere$, a map $u:\tsphere\rightarrow\KU$ extending $\sphere\rightarrow\KU$,
and a twist, $u_{\beta}:\tsphere(\cdot)_\beta\rightarrow\KU(\cdot)_\alpha$ derived from a map $\beta:X\rightarrow BGL_1\tsphere$.

\begin{definition}
    Let $\sphere[\ZZ/r]$ denote the algebraic $K$-theory of the symmetric monoidal category $rSets$ of finite disjoint unions of $\ZZ/r$
    with the faithful $\ZZ/r$ action and $\ZZ/r$-equivariant maps. The spectrum $\sphere[\ZZ/r]$ is a commutative $\sphere$-algebra.
\end{definition}

\begin{lemma}\label{lem:msets}
    The homotopy groups of $\sphere[\ZZ/r]$ are
    \begin{equation*}
        \pi_q\sphere[\ZZ/r]\iso \pi_q^s\oplus\pi_q^s(B\ZZ/r),
    \end{equation*}
    where $B\ZZ/r$ is the classifying space of $\ZZ/r$. Moreover, $\pi_q^s(B\ZZ/r)$ maps surjectively onto the $r$-primary part of $\pi_q^s$.
    \begin{proof}
        The first statement follows from the Barratt-Kahn-Priddy-Quillen theorem (see Thomason~\cite[Lemma~2.5]{thomason_first_1982}),
        which says that $\sphere[\ZZ/m]$ is equivalent to the spectrum $\Sigma^{\infty}(B\ZZ/m)_+$. The second statement follows from the Kahn-Priddy theorem~\cite{kahn-priddy-1,kahn-priddy-2}.
    \end{proof}
\end{lemma}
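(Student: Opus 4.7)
The plan is to identify $\sphere[\ZZ/r]$ with the suspension spectrum of $(B\ZZ/r)_+$, split off the sphere, and then use Kahn-Priddy to detect the $r$-primary part of the stable stems.

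First I would unpack the definition: a finite disjoint union of copies of $\ZZ/r$ with the faithful translation action is precisely a finite free $\ZZ/r$-set, and the automorphism group of the free $\ZZ/r$-set on $n$ orbits is the wreath product $\Sigma_n\wr\ZZ/r$. Applying Segal's infinite-loop space machine to the permutative category $\bigsqcup_{n\ge 0}B(\Sigma_n\wr\ZZ/r)$ realizes $\sphere[\ZZ/r]$ as the $K$-theory of this category, and the Barratt-Priddy-Quillen-Segal theorem (in the form cited from Thomason) identifies this with $\Sigma^\infty(B\ZZ/r)_+$. The standard splitting $\Sigma^\infty Y_+\we\sphere\vee\Sigma^\infty Y$ for a pointed space $Y$ then gives, on homotopy groups, the claimed decomposition
\begin{equation*}
    \pi_q\sphere[\ZZ/r]\iso\pi_q^s\oplus\pi_q^s(B\ZZ/r).
\end{equation*}

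For the second statement, I would decompose $r=\prod_i p_i^{k_i}$ and note that the $r$-primary subgroup of $\pi_q^s$ is the direct sum of the $p_i$-primary subgroups. For each prime $p\mid r$, the inclusion $\ZZ/p\hookrightarrow\ZZ/r$ generated by $r/p$ induces a map $\Sigma^\infty B\ZZ/p\to\Sigma^\infty B\ZZ/r$, and under the BPQ identification the augmentation $\sphere[\ZZ/r]\to\sphere$ is the transfer associated to $B\ZZ/r\to\ast$. The composite
\begin{equation*}
    \Sigma^\infty B\ZZ/p\to\Sigma^\infty B\ZZ/r\to\sphere
\end{equation*}
is therefore the classical Kahn-Priddy map for $\ZZ/p$, which surjects onto the $p$-primary part of $\pi_q^s$ for $q>0$. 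Assembling over the primes dividing $r$ gives surjectivity onto the full $r$-primary part.

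The part I expect to require the most care is the passage from the Kahn-Priddy theorem, which is most naturally stated for symmetric groups $\Sigma_p$, to the cyclic subgroup $\ZZ/p$. The standard fix is to invoke the $p$-local stable splitting of $B\Sigma_p$ that exhibits $B\ZZ/p$ as a wedge summand, so the image of the transfer on $B\Sigma_p$ at the prime $p$ is already carried by $B\ZZ/p$; alternatively one can cite the formulation of Kahn-Priddy for cyclic $p$-groups directly. Beyond this, everything is bookkeeping across the primes dividing $r$.
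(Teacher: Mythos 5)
Your proof is correct and follows essentially the same route as the paper's: identify $\sphere[\ZZ/r]$ with $\Sigma^\infty(B\ZZ/r)_+$ via Barratt--Priddy--Quillen (Thomason's formulation for permutative categories), split off the sphere summand, and invoke Kahn--Priddy prime-by-prime for the surjectivity onto the $r$-primary part of $\pi_*^s$. The paper's proof is terser and leaves the wreath-product bookkeeping and the prime decomposition implicit, but there is no genuine difference in strategy; your observation about the $\Sigma_p$ versus $\ZZ/p$ form of Kahn--Priddy is already handled in the original Kahn--Priddy papers, which treat the transfer from $B\ZZ/p$ directly.
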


\begin{lemma}\label{lem:munits}
    For any $r$, the unit map $\sphere\rightarrow\KU$ factors through $\sphere\rightarrow\sphere[\ZZ/r]$.
    And, the map $\sphere[\ZZ/r]\rightarrow\KU$ is a map of $\sphere$-algebras.
    \begin{proof}
        This follows by embedding $\ZZ/r$ into $\CC^*$ as the $r$th roots of unity and then applying Segal's construction~\cite{segal-categories}
        to obtain $\sphere\rightarrow\sphere[\ZZ/r]\rightarrow\ku$. Now, compose with $\ku\rightarrow\KU$.
        That $\KU$ is in fact a commutative $S$-algebra is~\cite[Theorem~4.3]{ekmm}.
    \end{proof}
\end{lemma}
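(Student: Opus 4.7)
The plan is to construct the map $\sphere[\ZZ/r]\to\KU$ by applying Segal's infinite loop space machine to an explicit symmetric monoidal functor, and then to verify the factorization by chasing units through the construction. The key observation is that the $r$-th roots of unity give a natural embedding $\ZZ/r\hookrightarrow\CC^*$ (choose a primitive $r$-th root $\zeta$), which, together with the forgetful structure, packages the category $rSets$ into a symmetric monoidal source for a functor landing in complex vector spaces.

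First I would define a symmetric monoidal functor $F\colon rSets\to\Vect_\CC^{\text{iso}}$, the groupoid of finite dimensional complex vector spaces and linear isomorphisms under direct sum. On objects, $F$ sends a disjoint union $(\ZZ/r)^{\sqcup n}$ to $\CC^n$; on morphisms, a $\ZZ/r$-equivariant automorphism, which lies in the wreath product $\ZZ/r\wr\Sigma_n$, is sent to the diagonal matrix of $r$-th roots of unity determined by the $\ZZ/r$-components composed with the permutation matrix from the $\Sigma_n$-component. Disjoint union goes to direct sum, so $F$ is symmetric monoidal. Then apply Segal's $K$-theory of symmetric monoidal categories~\cite{segal-categories}, which, as explained in~\cite{ekmm,hss}, produces $E_\infty$-ring spectra from permutative categories and $E_\infty$-ring maps from symmetric monoidal functors. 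This yields a map of commutative $\sphere$-algebras
\begin{equation*}
    \sphere[\ZZ/r]=K(rSets)\rightarrow K(\Vect_\CC^{\text{iso}})=\ku.
\end{equation*}

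Next I would verify the factorization of the unit. The inclusion $\Sigma\hookrightarrow rSets$ of the category of finite sets, sending $[n]$ to $(\ZZ/r)^{\sqcup n}$ with trivial automorphisms in the $\ZZ/r$-direction, is symmetric monoidal and induces the unit $\sphere\to\sphere[\ZZ/r]$ on $K$-theory. The composition $\Sigma\hookrightarrow rSets\xrightarrow{F}\Vect_\CC^{\text{iso}}$ sends $[n]$ to $\CC^n$ with permutation automorphisms, which is precisely the symmetric monoidal functor whose $K$-theoretic realization is the unit $\sphere\to\ku$. Composing with the canonical map of commutative $\sphere$-algebras $\ku\to\KU$ of~\cite[Theorem~4.3]{ekmm} produces the desired map $\sphere[\ZZ/r]\to\KU$ of commutative $\sphere$-algebras, and the unit $\sphere\to\KU$ factors through it by construction.

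The only genuinely non-formal input is the assertion that the spectra $\ku$ and $\KU$ may be modeled as commutative $\sphere$-algebras compatibly with the maps in question; this is supplied by~\cite{ekmm} and also falls out of Segal's machine applied to a permutative replacement of $\Vect_\CC^{\text{iso}}$. Everything else is a straightforward identification of symmetric monoidal functors, which I expect to be the principal bookkeeping obstacle, rather than a deep obstruction.
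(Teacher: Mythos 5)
Your proposal is correct and follows essentially the same approach as the paper: embed $\ZZ/r\hookrightarrow\CC^*$ via roots of unity, apply Segal's machine to get an $E_\infty$ map $\sphere[\ZZ/r]\to\ku$, observe the unit factorization, and compose with the EKMM map $\ku\to\KU$. You have merely unwound the paper's terse proof into explicit symmetric monoidal functors, which is a useful elaboration but not a different argument.
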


%

\begin{proposition}
    There is a natural map
    \begin{equation*}
        K(\ZZ/r,2)\rightarrow BGL_1\sphere[\ZZ/r]
    \end{equation*}
    \begin{proof}
        The group $\ZZ/r$ acts naturally as monoidal auto-equivalences of the symmetric monoidal category $rSets$ and so $\ZZ/r$ acts naturally on $\sphere[\ZZ/r]$.
        This gives a map $B\ZZ/r\rightarrow GL_1\sphere[\ZZ/r]$. By delooping, we get the desired map.
    \end{proof}
\end{proposition}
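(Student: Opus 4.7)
The plan is to construct the map in two steps: first build a natural map $B\ZZ/r \to GL_1\sphere[\ZZ/r]$, then apply the classifying space functor $B$ once to obtain $K(\ZZ/r,2) = B^2\ZZ/r \to BGL_1\sphere[\ZZ/r]$.

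For the first step, I would exhibit a $\ZZ/r$-action on the symmetric monoidal category $rSets$ by monoidal auto-equivalences. Since $\ZZ/r$ is abelian, for each $g \in \ZZ/r$ left multiplication by $g$ is a $\ZZ/r$-equivariant endomorphism of any free $\ZZ/r$-set, and these endomorphisms assemble into a monoidal natural automorphism $\eta_g$ of the identity functor on $rSets$; one checks that $\eta_g \circ \eta_{g'} = \eta_{gg'}$, so this is a group homomorphism from $\ZZ/r$ into the monoidal self-transformations of $\id_{rSets}$. By the naturality of Segal's $K$-theory machine for symmetric monoidal categories, this promotes to an action of $\ZZ/r$ on the $E_\infty$-ring $\sphere[\ZZ/r]$ by $E_\infty$-ring maps. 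Since each $\eta_g$ restricts to the identity on the monoidal unit, the induced self-maps of $\sphere[\ZZ/r]$ act trivially on $\pi_0 \sphere[\ZZ/r] = \ZZ$, and in particular are invertible; by the standard translation between actions and maps out of classifying spaces this action corresponds to a pointed map $B\ZZ/r \to GL_1\sphere[\ZZ/r]$.

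For the second step, note that $B\ZZ/r$ is an infinite loop space (since $\ZZ/r$ is abelian, $B\ZZ/r$ is an Eilenberg--MacLane space $K(\ZZ/r,1)$, and so the $0$-space of a connective spectrum), and $GL_1\sphere[\ZZ/r]$ is also an infinite loop space as the units of a commutative $\sphere$-algebra. The map of the previous step is an infinite loop map because it is produced from a $\ZZ/r$-action on an $E_\infty$-ring that is compatible with all the relevant symmetric monoidal structure. Applying the classifying space functor once then yields
\[
  K(\ZZ/r,2) \;=\; B^2\ZZ/r \;\to\; BGL_1\sphere[\ZZ/r],
\]
as required.

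The hard part is the ``this gives a map $B\ZZ/r\to GL_1\sphere[\ZZ/r]$'' step: one must verify that the $\ZZ/r$-action on $\sphere[\ZZ/r]$ arising from monoidal auto-equivalences of $rSets$ factors through the subspace $GL_1\sphere[\ZZ/r]$ of the full space of $E_\infty$-self-equivalences, and, moreover, does so with enough coherence that the single delooping to $BGL_1\sphere[\ZZ/r]$ is well-defined. Concretely, one wants the $\ZZ/r$-action to come from coherent multiplication by units; this is reasonable because the $\eta_g$ are themselves monoidal natural transformations of the identity, which is exactly the categorical shadow of being a unit in the $K$-theory ring spectrum.
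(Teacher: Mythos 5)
You follow the paper's outline, but there is a genuine gap in the middle step. The claim that the $\eta_g$ promote to a $\ZZ/r$-action on $\sphere[\ZZ/r]$ by $E_\infty$-ring maps is not correct, and even if it were, a $\ZZ/r$-action by ring automorphisms would produce a map $B\ZZ/r\to B\Aut_{E_\infty}(\sphere[\ZZ/r])$, which is not the same thing as a map into $GL_1\sphere[\ZZ/r]$: the latter is the union of the invertible path-components of $\Omega^\infty\sphere[\ZZ/r]$, and multiplication by a unit is an $\sphere[\ZZ/r]$-module automorphism of $\sphere[\ZZ/r]$, not a ring automorphism, so $GL_1$ is not ``a subspace of the full space of $E_\infty$-self-equivalences'' as your last paragraph asserts. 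Concretely, the ring structure on $\sphere[\ZZ/r]$ is induced by a multiplicative pairing $S\ast T$ on $rSets$ whose unit is the one-orbit set $\ZZ/r$, and one checks that $(\eta_g)_S\ast(\eta_g)_T$ equals $(\eta_{g^2})_{S\ast T}$ while $(\eta_g)_{S\ast T}$ involves only one factor of $g$; so the $\eta_g$ are additively but not multiplicatively monoidal, and Segal's machine gives spectrum maps, not ring maps. For the same reason the assertion ``each $\eta_g$ restricts to the identity on the monoidal unit'' fails for the multiplicative unit, on which $\eta_g$ acts by multiplication by $g$.

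The mechanism you want is a refinement of your ``categorical shadow'' remark. A natural automorphism of the identity which is $\sqcup$-monoidal determines, after group completion, not a self-map of $\sphere[\ZZ/r]$ but a self-homotopy of the identity, i.e.\ a loop in $\Omega^\infty\sphere[\ZZ/r]$ based at $1$; this is precisely an element of $\pi_1 GL_1\sphere[\ZZ/r]$, namely multiplication by the class of the automorphism $(\eta_g)_{\ZZ/r}$ of the multiplicative unit object. Equivalently, the multiplicatively invertible objects of $rSets$ form a Picard groupoid with a single isomorphism class and automorphism group $\ZZ/r$; its classifying space $B\ZZ/r$ maps into $\Omega^\infty\sphere[\ZZ/r]$ landing in the unit component, hence in $GL_1\sphere[\ZZ/r]$, and this map is one of grouplike $E_\infty$-spaces because the monoid structure on $\Aut(\ZZ/r)$ induced by $\ast$ is the one that becomes ring multiplication. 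Delooping once then gives $K(\ZZ/r,2)\to BGL_1\sphere[\ZZ/r]$. You correctly identified that the passage from the monoidal-center data to a point of $GL_1$ is where the work lies, but the justification via ring automorphisms and their action on $\pi_0$ is the wrong channel and does not close the gap.
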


\begin{proposition}\label{prop:twistedunit}
    Let $\alpha\in\Hoh^3(X,\ZZ)_{\tors}$ be $r$-torsion, and lift $\alpha$
    to $\sigma\in\Hoh^2(X,\ZZ/r)$. Then, there is a natural map
    \begin{equation*}
        \sphere[\ZZ/r](\cdot)_{\sigma}\rightarrow \KU(\cdot)_{\alpha}
    \end{equation*}
    of presheaves of spectra restricting locally to the map of lemma~\ref{lem:munits}.
    \begin{proof}
        It suffices to check that the deloopings
        \begin{equation*}
            \begin{CD}
                K(\ZZ/r,1)  @>>>    GL_1\sphere[\ZZ/r]\\
                @V\beta VV                @VVV\\
                K(\ZZ,2)    @>>>    GL_1 \KU
            \end{CD}
        \end{equation*}
        commute up to homotopy. To check this we can check on maps out of finite CW-complexes. Let $\gamma\in\Hoh^1(X,\ZZ/r)$ be a
        $\ZZ/r$-torsor, then the corresponding automorphism of the constant sheaf $\sphere[\ZZ/r]$ is given by tensoring with $\gamma$. The
        induced automorphism of the constant sheaf $\KU$ is tensoring with the complex line bundle induced by $\gamma$. On the other hand
        $\Hoh^1(X,\ZZ/r)\rightarrow\Hoh^2(X,\ZZ)$ sends $\gamma$ to $c_1(\mathcal{L})$, where $\mathcal{L}$ is the complex line bundle
        associated to $\gamma$. The corresponding automorphism of $\KU$ is given by tensoring with $\mathcal{L}$, by
        construction~\cite{abg}.
    \end{proof}
\end{proposition}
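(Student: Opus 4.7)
The plan is to use the functoriality of the Thom spectrum construction relative to a morphism of commutative $\sphere$-algebras, and thereby reduce the proposition to a compatibility check at the level of classifying spaces $BGL_1(-)$. Concretely, given a map of commutative $\sphere$-algebras $u:\rsphere\rightarrow\tsphere$, a twist $f:X\rightarrow BGL_1\rsphere$, and an identification of $u\circ f$ (in the appropriate sense) with a given twist $g:X\rightarrow BGL_1\tsphere$, one obtains $X^{g}\simeq X^{f}\wedge_{\rsphere}\tsphere$, and dualizing produces a natural map $\rsphere(X)_{f}=\Mod_{\rsphere}(X^{f},\rsphere)\rightarrow\Mod_{\rsphere}(X^{f},\tsphere)\simeq\tsphere(X)_{g}$. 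Applying this to $u:\sphere[\ZZ/r]\rightarrow\KU$ of lemma~\ref{lem:munits}, it therefore suffices to check that the composition
\begin{equation*}
K(\ZZ/r,2)\rightarrow BGL_1\sphere[\ZZ/r]\rightarrow BGL_1\KU
\end{equation*}
agrees up to homotopy with the Bockstein $\beta:K(\ZZ/r,2)\rightarrow K(\ZZ,3)$ followed by the standard map $K(\ZZ,3)\rightarrow BGL_1\KU$.

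Looping once reduces the check to showing that the two maps $K(\ZZ/r,1)\rightrightarrows GL_1\KU$ are homotopic. Since $K(\ZZ/r,1)$ has the homotopy type of a CW-complex, I can verify this by evaluating on an arbitrary class $\gamma\in\Hoh^1(W,\ZZ/r)$ for $W$ a finite CW-complex. Both compositions then produce automorphisms of the constant sheaf $\KU$ on $W$, and I need to identify each with the automorphism given by tensoring with the complex line bundle $\mathcal{L}$ whose first Chern class is $\beta(\gamma)\in\Hoh^2(W,\ZZ)$. The lower path does this essentially by construction, since tensoring with $\mathcal{L}$ is the automorphism of $\KU$ determined by $c_1(\mathcal{L})$ via $K(\ZZ,2)\rightarrow GL_1\KU$, as explained in~\cite{abg}.

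For the upper path, unwinding Segal's construction, $\gamma$ classifies a $\ZZ/r$-torsor $P\rightarrow W$, and the induced automorphism of the constant sheaf $\sphere[\ZZ/r]$ on $W$ is tensoring with $P$ viewed as an invertible $\sphere[\ZZ/r]$-module. Pushing forward along $u$ and using the embedding $\ZZ/r\hookrightarrow\CC^{\ast}$ as $r$-th roots of unity (which is how $\sphere[\ZZ/r]\rightarrow\ku\rightarrow\KU$ was defined in lemma~\ref{lem:munits}), this becomes tensoring with the complex line bundle $\mathcal{L}$ associated to the composition $\gamma:W\rightarrow B\ZZ/r\rightarrow B\CC^{\ast}\simeq K(\ZZ,2)$. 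But this is precisely $\mathcal{L}$ above, since the connecting map $\Hoh^1(W,\ZZ/r)\rightarrow\Hoh^2(W,\ZZ)$ of the coefficient sequence $\ZZ\rightarrow\ZZ\rightarrow\ZZ/r$ identifies with the map sending $\gamma$ to $c_1(\mathcal{L})$.

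The main obstacle is the last paragraph: one must trace carefully through Segal's group-completion and the identification of the $\ZZ/r$-action on $\sphere[\ZZ/r]$ arising from auto-equivalences of $rSets$ with the action obtained from the bar construction on the abelian group $\ZZ/r$. Provided this identification is made coherently, everything else is formal from the functoriality of $BGL_1$ on commutative $\sphere$-algebras and of the Thom construction; the naturality in $X$ of the resulting map $\sphere[\ZZ/r](\cdot)_{\sigma}\rightarrow\KU(\cdot)_{\alpha}$ of presheaves of spectra is then automatic, and the assertion that it restricts locally to the map of lemma~\ref{lem:munits} follows by specializing the twist to the trivial one.
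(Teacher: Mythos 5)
Your argument is correct and takes essentially the same approach as the paper: both reduce the construction to checking that the square relating $K(\ZZ/r,1)\to GL_1\sphere[\ZZ/r]$ and $\Omega\beta:K(\ZZ/r,1)\to K(\ZZ,2)\to GL_1\KU$ commutes up to homotopy, and both verify this by evaluating on classes $\gamma\in\Hoh^1(W,\ZZ/r)$ over finite CW-complexes and identifying each path with tensoring by the line bundle $\mathcal{L}$ with $c_1(\mathcal{L})=\beta(\gamma)$. You spell out more explicitly the functoriality of the Thom construction along $u:\sphere[\ZZ/r]\to\KU$ (the base-change $X^g\simeq X^f\wedge_{\sphere[\ZZ/r]}\KU$ and the resulting map on duals), which the paper subsumes in the phrase ``it suffices to check that the deloopings commute,'' but the content is the same.
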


\begin{corollary}\label{cor:morss}
    Let $\alpha\in\Hoh^3(X,\ZZ)_{\tors}$ be $r$-torsion, and lift $\alpha$ to $\sigma\in\Hoh^2(X,\ZZ/r)$. Then there is a spectral sequence
    \begin{equation*}
        \Eoh^2_{p,q}=\Hoh^p(X,\pi_q\sphere[\ZZ/r])\Rightarrow\pi_{q-p}\sphere[\ZZ/r]^{\wedge}(X)_{\sigma},
    \end{equation*}
    where $\sphere[\ZZ/r]^{\wedge}(\cdot)_{\sigma}$ is the hypersheaf associated to the
    presheaf $\sphere[\ZZ/r](\cdot)_\sigma$. The differentials $d_k^{\sigma}$
    are of degree $(k,k-1)$. There is a morphism of spectral sequences
    \begin{equation*}
        \Big(\Hoh^p(X,\pi_q\sphere[\ZZ/r])\Rightarrow\pi_{q-p}\sphere[\ZZ/r]^{\wedge}(X)_{\sigma}\Big)\rightarrow\Big(\Hoh^p(X,\pi_q\KU)\Rightarrow\pi_{q-p}\KU(X)_{\alpha}\Big).
    \end{equation*}
    \begin{proof}
        This is the standard descent spectral sequence. Note that
        $\sphere[\ZZ/r](\cdot)_\sigma\rightarrow\KU(\cdot)_\alpha$ factors through hypersheafification
        $\sphere[\ZZ/r](\cdot)_\sigma\rightarrow\sphere[\ZZ/r]^{\wedge}(\cdot)_\sigma$ because $\KU(\cdot)_\alpha$ is a hypersheaf by proposition~\ref{prop:descent}.
    \end{proof}
\end{corollary}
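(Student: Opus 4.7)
The plan is to invoke the standard Postnikov/descent spectral sequence available for any hypersheaf of spectra on $X$, applied to the hypersheaf $\sphere[\ZZ/r]^{\wedge}(\cdot)_\sigma$, and then to obtain the morphism of spectral sequences by naturality of this construction, using the fact that $\KU(\cdot)_\alpha$ is already a hypersheaf by proposition~\ref{prop:descent}.

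First I would recall (or cite, e.g., \cite{dag7} or \cite{jardine-symmetric}) that every hypersheaf of spectra $\mathcal{F}$ on $X$ admits a Postnikov tower whose layers are Eilenberg--MacLane sheaves on the homotopy sheaves $\pi_q\mathcal{F}$; taking global sections of this tower yields a strongly convergent (under mild finite-dimensionality assumptions on $X$) descent spectral sequence
\begin{equation*}
    \Eoh_2^{p,q}=\Hoh^p(X,\pi_q\mathcal{F})\Rightarrow \pi_{q-p}\mathcal{F}(X),
\end{equation*}
with differentials $d_k$ of bidegree $(k,k-1)$. Applying this to $\mathcal{F}=\sphere[\ZZ/r]^{\wedge}(\cdot)_\sigma$ gives the sought-for spectral sequence, provided one identifies its homotopy sheaves.

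The key step is the identification of homotopy sheaves. The class $\sigma\in\Hoh^2(X,\ZZ/r)$ is locally trivial, so over a sufficiently small contractible open $U\subseteq X$ the twist $\sphere[\ZZ/r](U)_\sigma$ is equivalent to $\sphere[\ZZ/r]$ itself. Hypersheafification does not change stalks, so the homotopy sheaf $\pi_q\sphere[\ZZ/r]^{\wedge}(\cdot)_\sigma$ is the locally constant (in fact constant, since $X$ is connected) sheaf with value $\pi_q\sphere[\ZZ/r]$ computed in lemma~\ref{lem:msets}. This yields the asserted $\Eoh_2$-page $\Hoh^p(X,\pi_q\sphere[\ZZ/r])$.

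Finally, for the morphism of spectral sequences I would invoke naturality. Proposition~\ref{prop:twistedunit} supplies a map of \emph{presheaves} of spectra $\sphere[\ZZ/r](\cdot)_\sigma\rightarrow \KU(\cdot)_\alpha$; since $\KU(\cdot)_\alpha$ is already a hypersheaf by proposition~\ref{prop:descent}, this map factors uniquely through hypersheafification as
\begin{equation*}
    \sphere[\ZZ/r](\cdot)_\sigma\rightarrow \sphere[\ZZ/r]^{\wedge}(\cdot)_\sigma\rightarrow \KU(\cdot)_\alpha.
\end{equation*}
The Postnikov tower construction is functorial in maps of hypersheaves of spectra, so this induces the desired morphism of descent spectral sequences, compatible with the identification of $E_2$-pages given by the map of homotopy sheaves $\pi_q\sphere[\ZZ/r]\rightarrow \pi_q\KU$ coming from lemma~\ref{lem:munits}. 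The only mild obstacle is checking that the $E_2$-term really is sheaf cohomology of constant sheaves rather than some more complicated object; this is taken care of by the local triviality of the twist noted above, together with the observation that $X$ is a CW-complex so that hypersheaf and sheaf cohomology of locally constant sheaves agree with singular cohomology.
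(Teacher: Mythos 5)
Your proof is correct and takes essentially the same route as the paper: you invoke the standard descent (Postnikov) spectral sequence for a hypersheaf of spectra, identify the $\Eoh_2$-page via local triviality of the twist, and obtain the morphism of spectral sequences by factoring the map of presheaves through hypersheafification because $\KU(\cdot)_\alpha$ is already a hypersheaf (proposition~\ref{prop:descent}). The paper's proof is a terse version of exactly these points, leaving the identification of homotopy sheaves and the functoriality of the Postnikov tower implicit under the phrase ``standard descent spectral sequence.''
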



\subsection{The $K$-theoretic index}

There are rank maps forming a commutative diagram
\begin{equation*}
    \begin{CD}
        \mathcal{K}_0^{\alpha}(X)   @>>>    \ZZ\\
        @VVV                                @|\\
        \KU^0(X)_{\alpha}            @>>>    \ZZ
    \end{CD}
\end{equation*}
The index $\ind(\alpha)$ is the positive generator of the image of the top arrow, and we define the \textit{$\K$-index\/} $\ind_{\K}(\alpha)$
to be the positive generator of the image of the bottom arrow. We may view the rank map as the pull-back of an $\alpha$-twisted virtual
bundle to a point, that is to say the bottom map may be written as $\KU^0(X)_\alpha \to\KU^0(\ast)_0 \iso \ZZ$.
We may compare the Atiyah--Hirzebruch spectral sequences for the twisted $\KU$-theory of $X$ and $\ast$ to arrive at the following conclusion.

\begin{proposition} \label{p:indKasPermCyc}
  Let $X$ be a finite-dimensional connected CW-complex and let $\alpha \in \Hoh^3(X, \ZZ)_{\tors}$. Then $\ind_\K(\alpha)$ is the positive generator of
  the subgroup  $\Eoh_\infty^{0,0} $ of the group $ \Eoh_2^{0,0} = H^0(X, \ZZ) = \ZZ$ in the Atiyah--Hirzebruch spectral sequence of proposition \ref{p:descSS}.
\end{proposition}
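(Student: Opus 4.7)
The plan is to unpack the definition of $\ind_\K(\alpha)$ as the positive generator of the image of the edge map, and then identify that image with the subgroup $\Eoh_\infty^{0,0} \subseteq \Eoh_2^{0,0} = \ZZ$.

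First, I would recall the structure of the Atiyah--Hirzebruch spectral sequence of proposition~\ref{p:descSS}. Since $X$ is a finite-dimensional CW-complex, the spectral sequence converges strongly, so $\KU^0(X)_{\alpha}$ carries a finite decreasing filtration $\KU^0(X)_{\alpha} = F^0 \supseteq F^1 \supseteq \cdots$ with $F^p/F^{p+1} \cong \Eoh_\infty^{p,-p}$. In particular, the edge map factors as a surjection followed by an inclusion,
\begin{equation*}
    \KU^0(X)_{\alpha} \twoheadrightarrow \KU^0(X)_{\alpha}/F^1 = \Eoh_\infty^{0,0} \hookrightarrow \Eoh_2^{0,0} = \Hoh^0(X,\ZZ) = \ZZ,
\end{equation*}
where the final inclusion holds because all differentials departing $\Eoh_r^{0,0}$ land in positive total degree columns, so $\Eoh_\infty^{0,0}$ is an intersection of kernels inside $\Eoh_2^{0,0}$.

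Next, I would invoke proposition~\ref{p:descSS}, which identifies this composite edge map with the rank (or index) map $\KU^0(X)_{\alpha} \to \ZZ$. Equivalently, one can argue by naturality of the Atiyah--Hirzebruch spectral sequence along the pointed map $\iota: \{\pt\} \hookrightarrow X$ (using that $X$ is connected so $\Hoh^0(X,\ZZ) \to \Hoh^0(\pt,\ZZ)$ is the identity, and that $\alpha$ restricts trivially to a point): the edge map for the spectral sequence over a point is an isomorphism $\KU^0(\pt)_0 \riso \ZZ$, and pullback along $\iota$ matches the rank map with the edge map for $X$.

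Combining the two steps, the image of the rank map $\KU^0(X)_\alpha \to \ZZ$ is exactly the image of the surjection onto $\Eoh_\infty^{0,0}$, embedded in $\Eoh_2^{0,0} = \ZZ$, namely the subgroup $\Eoh_\infty^{0,0}$ itself. Since by definition $\ind_\K(\alpha)$ is the positive generator of this image, the proposition follows. There is no real obstacle here; the only subtlety to spell out is strong convergence of the spectral sequence (guaranteed by proposition~\ref{p:descSS} under the finite-dimensionality hypothesis), which is what makes the edge-map description of $\Eoh_\infty^{0,0}$ valid.
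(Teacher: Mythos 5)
Your argument is correct and is essentially the approach the paper intends: the paper offers no displayed proof, but the paragraph preceding the proposition says precisely to view the rank map as restriction to a point and to compare the Atiyah--Hirzebruch spectral sequences for $X$ and for $\ast$, which is exactly your second paragraph (and your first paragraph merely makes explicit the strong-convergence/edge-map bookkeeping that proposition~\ref{p:descSS} already records).
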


Note that under $\mathcal{K}^{\mathcal{A}}_0(X)\iso\mathcal{K}^{\alpha}_0(X)$, the map $\mathcal{K}^{\mathcal{A}}_0(X)\rightarrow\ZZ$ is the
reduced norm map. It sends an $\mathcal{A}$-module to the quotient of its rank as a complex vector bundle by the degree of $\mathcal{A}$.

\begin{lemma}\label{lem:indkind}
    For any $\alpha\in\Hoh^3(X,\ZZ)_{\tors}$,
    \begin{equation*}
        \per(\alpha)|\ind_{\K}(\alpha)|\ind(\alpha).
    \end{equation*}
    Moreover, $ind(\alpha)$ is equal to
    \begin{equation*}
        \gcd\{\deg(\mathcal{A}) : [\mathcal{A}]=\alpha\}.
    \end{equation*}
    \begin{proof}
        The statement about divisibilities follows from the commutative diagram.
        The second statement follows from propositions~\ref{prop:azsheafcor} and~\ref{prop:morita}.
    \end{proof}
\end{lemma}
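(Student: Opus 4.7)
The plan is to read both divisibilities off of the commutative rank diagram, using the Atiyah--Hirzebruch spectral sequence for the twisted unit and the identification of $\ind_\K(\alpha)$ with a permanent cycle supplied by proposition~\ref{p:indKasPermCyc}. The "moreover'' clause is then a bookkeeping exercise using the Morita equivalence between $\alpha$-twisted vector bundles and modules over a representing Azumaya algebra.

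First I would handle $\ind_{\K}(\alpha)\mid \ind(\alpha)$. The commutative diagram preceding the lemma exhibits the rank map on $\mathcal{K}_0^{\alpha}(X)$ as the composition $\mathcal{K}_0^{\alpha}(X)\to \KU^{0}(X)_{\alpha}\to\ZZ$. Hence the image of the upper map is contained in the image of the lower map; in particular the positive generator $\ind_{\K}(\alpha)$ of the lower image divides the positive generator $\ind(\alpha)$ of the upper image.

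Next I would prove $\per(\alpha)\mid \ind_{\K}(\alpha)$. By proposition~\ref{p:indKasPermCyc}, $\ind_{\K}(\alpha)$ is the positive generator of $\Eoh_{\infty}^{0,0}\subseteq \Eoh_{2}^{0,0}=\Hoh^{0}(X,\ZZ)=\ZZ$ in the Atiyah--Hirzebruch spectral sequence for $\KU(X)_{\alpha}$. In particular $\ind_{\K}(\alpha)\cdot 1 \in \Eoh_{2}^{0,0}$ survives to $\Eoh_{4}^{0,0}$, so
\begin{equation*}
0 \;=\; d_{3}^{\alpha}\bigl(\ind_{\K}(\alpha)\cdot 1\bigr)\;=\;\ind_{\K}(\alpha)\cdot d_{3}^{\alpha}(1)\;=\;\ind_{\K}(\alpha)\cdot \alpha
\end{equation*}
in $\Hoh^{3}(X,\ZZ)$, where the middle equality uses $\ZZ$-linearity of $d_{3}^{\alpha}$ on $\Eoh_{2}^{0,0}$ and the last uses that $d_{3}^{\alpha}(1)=\alpha$. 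This forces $\per(\alpha)\mid \ind_{\K}(\alpha)$.

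Finally, for the "moreover'' clause, I would apply propositions~\ref{prop:azsheafcor} and~\ref{prop:morita} to identify the set of ranks of $\alpha$-twisted vector bundles with the set of degrees of Azumaya algebras of class $\alpha$: every such $\mathcal{A}$ has the form $\End(\mathcal{E})$ with $\deg(\mathcal{A})=\operatorname{rank}(\mathcal{E})$, and conversely every $\alpha$-twisted $\mathcal{E}$ produces $\End(\mathcal{E})\in\alpha$ of degree $\operatorname{rank}(\mathcal{E})$. Since $\mathcal{K}_{0}^{\alpha}(X)$ is generated as an abelian group by classes of honest $\alpha$-twisted vector bundles and the top arrow sends such a class to its rank, the image of $\mathcal{K}_{0}^{\alpha}(X)\to\ZZ$ is the subgroup generated by $\{\operatorname{rank}(\mathcal{E})\}=\{\deg(\mathcal{A}):[\mathcal{A}]=\alpha\}$, whose positive generator is the stated gcd. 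No single step is genuinely difficult; the only mild subtlety is keeping the two definitions of $\ind(\alpha)$ (the one via degrees of Azumaya algebras and the one as an image generator) in sync, which is exactly what propositions~\ref{prop:azsheafcor} and~\ref{prop:morita} were designed to do.
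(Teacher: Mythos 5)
Your proof is correct and follows the paper's intended line of reasoning. The paper's stated proof is terse — it attributes both divisibilities to "the commutative diagram," but on its face that diagram only yields $\ind_{\K}(\alpha)\mid\ind(\alpha)$; the divisibility $\per(\alpha)\mid\ind_{\K}(\alpha)$ requires exactly the argument you supply, namely that $\ind_{\K}(\alpha)$ generates the permanent cycles in $\Eoh_{\infty}^{0,0}$ (proposition~\ref{p:indKasPermCyc}) and hence is annihilated by $d_3^\alpha(1)=\alpha$, which the paper had just established and clearly has in mind. Your treatment of the "moreover" clause via propositions~\ref{prop:azsheafcor} and~\ref{prop:morita} matches the paper's.
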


\begin{lemma}
    If $X$ is a finite CW-complex, then $\ind_{\K}(\alpha)=\ind(\alpha)$.
    \begin{proof}
        This follows immediately from proposition~\ref{prop:ku0compact}.
    \end{proof}
\end{lemma}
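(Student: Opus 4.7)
The plan is to deduce the lemma directly from proposition~\ref{prop:ku0compact}, observing that a finite CW-complex is compact Hausdorff. Since $\alpha \in \Br'(X_\topo) = \Br(X_\topo)$ (by Serre's theorem, noted earlier in the paper), I can choose an Azumaya algebra $\mathcal{A}$ with class $\alpha$. Combining proposition~\ref{prop:morita} with proposition~\ref{prop:ku0compact} then yields isomorphisms
\begin{equation*}
    \mathcal{K}_0^{\alpha}(X) \iso \mathcal{K}_0^{\mathcal{A}}(X) \iso \KU^0(X)_\alpha.
\end{equation*}

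Next I would verify that this composite isomorphism intertwines the two rank maps appearing in the commutative square just above the lemma statement. The top rank map sends an $\alpha$-twisted bundle $\mathcal{E}$ to its rank as a complex vector bundle; under the Morita equivalence of proposition~\ref{prop:morita} this corresponds to the reduced norm on $\mathcal{K}_0^{\mathcal{A}}(X)$, which divides the rank of an $\mathcal{A}$-module by $\deg(\mathcal{A})$. The edge map $\KU^0(X)_\alpha \to \Hoh^0(X,\ZZ) = \ZZ$ is identified with the reduced norm on finitely generated projective $\Gamma(X,\mathcal{A})$-modules by the Atiyah--Segal description invoked inside the proof of proposition~\ref{prop:ku0compact}. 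These identifications are consistent because the extension of Swan's theorem used there respects both $\mathcal{A}$-module structure and underlying rank.

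Once this compatibility is in hand, the images of the top and bottom rank maps to $\ZZ$ coincide as subgroups, hence their positive generators are equal; by definition these generators are $\ind(\alpha)$ and $\ind_{\K}(\alpha)$, respectively. I expect no real obstacle: lemma~\ref{lem:indkind} already supplies the divisibility $\ind_{\K}(\alpha) \mid \ind(\alpha)$ from the commutativity of the square, so the only new content is the reverse divisibility, which is precisely the surjectivity half of the isomorphism in proposition~\ref{prop:ku0compact}. The whole argument is essentially an unpacking of previously established identifications, which is presumably why the paper records it as a one-line corollary.
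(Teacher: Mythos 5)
Your argument is correct and follows the same route as the paper: invoke proposition~\ref{prop:ku0compact} (valid since a finite CW-complex is compact Hausdorff, and $\alpha\in\Br(X)$ by Serre's theorem) to get an isomorphism $\mathcal{K}_0^{\alpha}(X)\iso\KU^0(X)_\alpha$ compatible with the two rank maps, whence the images in $\ZZ$ agree. The only thing you add beyond the paper's one-line justification is the explicit check that the isomorphism intertwines the rank maps; this is already packaged into the commutative square displayed just before lemma~\ref{lem:indkind}, and it is insensitive to the $\Hoh^2(X,\ZZ)$-ambiguity in the isomorphism since tensoring by a line bundle preserves rank, so your unpacking is sound but not doing new work.
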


\begin{example}
    Suppose that $X$ is a finite CW-complex of dimension no greater than $6$, and suppose that $H^5(X,\ZZ)$ is torsion-free. Let $\alpha\in\Hoh^3(X,\ZZ)_{\tors}$. Then $\ind(\alpha)=\ind_{\K}(\alpha)=\per(\alpha)$.
    It suffices to compute $d_5^{\alpha}(\per(\alpha))$. This lands in a subquotient of $H^5(X,\ZZ)$. The differential
    \begin{equation*}
        d_3^{\alpha}:\Hoh^2(X,\ZZ)\rightarrow\Hoh^5(X,\ZZ)
    \end{equation*}
    is zero. Indeed, the differential is torsion since $\alpha$ is torsion, so it has torsion image, whereas $\Hoh^5(X,\ZZ)$ is non-torsion. As there are no differentials leaving $\Hoh^5(X,\ZZ)$,
    $\Eoh_5^{5,-4}=\Hoh^5(X,\ZZ)$.
    The element $d_5^{\alpha}(\per(\alpha))$ is some element of $\Hoh^5(X,\ZZ)$. It must be a torsion element since
    $\ind_{\K}(\alpha)=\ind(\alpha)$ is finite as $X$ is a finite CW-complex, and consequently it is zero. This gives $\ind(\alpha)=\ind_{\K}(\alpha)=\per(\alpha)$.
\end{example}

In the algebraic case, there is an analogue of the intermediary $\ind_{\K}$ that sits between $\per$ and $\ind$.

\begin{definition}
    The \'etale index $\mathrm{eti}(\alpha)$ of a class $\alpha\in\Br'(X_{\et})$ was defined in~\cite{antieau_index_2009} as the positive generator of the rank map $a_{\et}\K^{\alpha}_0(X)\rightarrow \ZZ$.
\end{definition}

\begin{corollary}
    For $X$ a scheme and $\alpha\in\Br(X_{\et})$,
    \begin{equation*}
        \ind_{\K}(\alpha_{\topo})|\mathrm{eti}(\alpha).
    \end{equation*}
    \begin{proof}
        This follows from theorem~\ref{thm:comparison}.
    \end{proof}
\end{corollary}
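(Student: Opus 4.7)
The plan is to apply the $\pi_0$-functor and then global sections to the comparison map of theorem~\ref{thm:comparison}, and then read off the divisibility from the resulting commutative square of rank maps. The étale index $\mathrm{eti}(\alpha)$ is, by definition, the positive generator of the image of the rank map $a_{\et}\K^{\alpha}_0(X)\rightarrow\ZZ$, and by proposition~\ref{p:indKasPermCyc} (or just the definition), $\ind_{\K}(\alpha_{\topo})$ is the positive generator of the image of the rank map $\KU^0(X)_{\alpha_{\topo}}\rightarrow\ZZ$. Both rank maps arise by taking $\pi_0$ of the structural map into a global sections spectral sequence whose $\Eoh_2^{0,0}$-entry is $\Hoh^0(X,\ZZ)=\ZZ$ (for $X$ connected, which we may assume).

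First, I would invoke theorem~\ref{thm:comparison} to obtain a map of hypersheaves of spectra $a_{\et}\K^{\alpha}\rightarrow\KU(\cdot)_{\alpha}$ that is an isomorphism on $\pi_0$-sheaves (each being the constant sheaf $\ZZ$). Taking global sections (equivalently, applying the descent spectral sequences of proposition~\ref{p:descSS} and corollary~\ref{cor:morss}-style to both sides) yields a commutative diagram
\begin{equation*}
    \begin{CD}
        a_{\et}\K^{\alpha}_0(X) @>>> \KU^0(X)_{\alpha_{\topo}}\\
        @VVV                         @VVV\\
        \Hoh^0(X,\ZZ) @=             \Hoh^0(X,\ZZ),
    \end{CD}
\end{equation*}
in which the vertical arrows are exactly the two rank maps whose images define $\mathrm{eti}(\alpha)$ and $\ind_{\K}(\alpha_{\topo})$. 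Commutativity forces $\mathrm{eti}(\alpha)\cdot\ZZ\subseteq\ind_{\K}(\alpha_{\topo})\cdot\ZZ$, which is precisely the claimed divisibility.

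The only step that requires a moment's care is checking that the bottom edge of the square really is the identity, i.e.\ that the induced map on the constant $\pi_0$-sheaves agrees with the canonical identification $\ZZ=\ZZ$; this is however exactly the content of the second clause of theorem~\ref{thm:comparison}. Once that is granted, the corollary is a one-line consequence. I do not anticipate any serious obstacle beyond bookkeeping with the two edge maps.
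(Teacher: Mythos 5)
Your proof is correct and spells out precisely what the paper's terse ``This follows from theorem~\ref{thm:comparison}'' means: take global sections of the comparison map of hypersheaves, note that the induced map on $\pi_0$-sheaves is the identity on the constant sheaf $\ZZ$, and compare the two rank/edge maps in the resulting commutative square. This is exactly the intended argument, so there is nothing further to add.
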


\section{The prime-divisor problem for topological spaces}\label{s:primediv}

In this short section we prove that for a finite CW complex $X$ and for $\alpha \in \Br(X)$, the
primes dividing $\per(\alpha)$ and $\ind(\alpha)$ agree. 

Let $A$ be a finitely-generated abelian group and $r$ a positive integer, not necessarily prime. We say $a \in A$ is $r$-primary if $r^ma=0$
for some positive integer $m$. We say $A$ is $r$-primary if all the elements of $A$ are $r$-primary. For a given positive integer $r$, the
class of $r$-primary finitely-generated abelian groups is a Serre class; it is closed under taking subobjects and the formation of
quotients and extensions. We denote this Serre class by $\mathcal{C}_r$. There is a mod-$\mathcal{C}_r$ Hurewicz theorem, \cite{serre_1958},
which implies that if $X$ is a simply-connected CW-complex and if every group $\pi_n(X)$ with $n>1$ is $r$-primary, then so too is every
homology group $\Hoh_n(X,\ZZ)$ with $n>0$. In particular, using the universal coefficients theorem, we know that $\Hoh^n(K(\ZZ/r,2), \ZZ)$
is $r$-primary for all $n>0$.

Since $K(\ZZ/r,2)$ is the universal space for $r$-torsion classes in $\Hoh^3( \cdot, \ZZ)$, we can exploit the $r$-primary nature of the
cohomology here to give a general statement concerning the differentials in the Atiyah--Hirzebruch spectral sequence of proposition
\ref{p:indKasPermCyc}. 

\begin{theorem} \label{t:PrimesPerIndAgree}
    If $X$ is a CW-complex and if $\alpha\in\Br(X)$, then $\ind_{\K}(\alpha)$ and $\per(\alpha)$ have the same prime divisors.
    \begin{proof}
      Since $\per(\alpha) | \ind_\K(\alpha)$, it suffices to show that any prime divisor of $\ind_\K(\alpha)$ is also a prime divisor of $\per(\alpha)$.
      
        That $\ind_{\K}(\alpha)$ is finite follows from the fact that $\alpha$ is in the Brauer group. Let $\per(\alpha)=r$ so that $\alpha$
        is the image under the unreduced Bockstein map of a class $\sigma\in\Hoh^2(X,\ZZ/r)$. In particular, $\alpha \in \Hoh^3(X, \ZZ)$
        is the pull-back of $\beta \in \Hoh^3(K(\ZZ/r, 2), \ZZ)$ under $\sigma$.

        We recall from proposition \ref{p:indKasPermCyc} that $\ind_\K(\alpha)$ is a
        generator for the group of permanent cycles in
        $\Eoh_2^{0,0}$. The map $\sigma$ induces a map of Atiyah--Hirzebruch spectral
        sequences, which we denote by $\Sigma$.
        \begin{equation*}
          \xymatrix@C=5pt{
            \IEoh^{p,q}_2 = \Hoh^p(X, \ZZ(q/2)) \ar@{=>}[rr] & & \KU^{p+q}(X)_\alpha \\
            \IIEoh^{p,q}_2 = \Hoh^p(K(\ZZ/r,2), \ZZ(q/2)) \ar@{=>}[rr] & \ar^{\Sigma}[u] & \KU^{p+q}(K(\ZZ/r,2))_\beta .}
        \end{equation*}
        We observe that $\Sigma : \IIEoh^{0,0}_2 = \ZZ \to \IEoh^{0,0}_2 = \ZZ$ is simply the isomorphism on $\Hoh^0$ induced by
        $\phi$. Subsequently, we find $\IEoh^{0,0}_n \subset \IEoh^{0,0}_2$ and similarly $\IIEoh^{0,0}_n \subset \IIEoh^{0,0}_2$; for
        dimensional reasons there are no differentials whose target is $\Eoh^{0,0}_n$ in either spectral sequence.
        We are therefore justified in writing
        \begin{equation}
          \label{eq:defmmprime}
          \IIEoh^{0,0}_n = m'_n \ZZ \subset \IEoh^{0,0}_n = m_n \ZZ \subset \IEoh^{0,0}_2 = \ZZ
        \end{equation}
        where $m_n$ and $m'_n$ are chosen to be nonnegative. 

        We claim that the prime factors of the integer $m'_n$ are prime factors of $r$. By definition we have exact
        sequences
        \begin{equation*}
          \xymatrix{ 0 \ar[r] & \IIEoh_{n+1}^{0,0} = m'_{n+1} \ZZ \ar[r] & \IIEoh_n^{0,0} = m'_n \ZZ \ar^{d_n}[r] & \IIEoh_n^{n, -n +1} }
        \end{equation*}
        Here $\IIEoh_n^{n,-n+1} $ is a subquotient of $\IIEoh_2^{n,-n+1} = \Hoh^n(K(\ZZ/r, 2), \ZZ(\frac{-n+1}{2}))$, and since the latter is
        $r$-primary, so is the former. In particular, the image of $d_n$ in the above sequence is $r$-primary, but this image is isomorphic
        to the group $\ZZ/(m'_{n+1}/m'_n)$, and consequently $m'_{n+1}/m'_n$ is a positive integer whose prime factors all divide $r$. The
        claim follows by induction.

        From equation \eqref{eq:defmmprime}, we conclude that $m_n | m'_n$ and so the prime factors of $m_n$ number among the prime factors
        of $r$. Finally, we observe that since $\ind_{\K}(\alpha)$ is finite we have a nonzero group on the $\Eoh_\infty$-page, that is
        \begin{equation*}
                 0 \subsetneq \IEoh_{\infty}^{0,0} \iso \ind_\K(\alpha) \ZZ \subset \IEoh_2^{0,0} = \ZZ
        \end{equation*}
        It follows that $\ind_\K(\alpha) = m_N$ for some sufficiently large $N$, and so we conclude that the prime numbers dividing
        $\ind_\K(\alpha)$ also divide $r = \per(\alpha)$, as required.
    \end{proof}
\end{theorem}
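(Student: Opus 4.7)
My plan is to exploit the universal example $K(\ZZ/r,2)$, whose positive-degree integral cohomology is $r$-primary by the mod-$\mathcal{C}_r$ Hurewicz theorem already recalled in this section. Since $\per(\alpha) \mid \ind_\K(\alpha)$ holds by Lemma~\ref{lem:indkind}, the content is to show that any prime dividing $\ind_\K(\alpha)$ also divides $r = \per(\alpha)$.

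First I would choose a lift $\sigma \in \Hoh^2(X,\ZZ/r)$ of $\alpha$; such a lift exists because $\alpha$ is $r$-torsion and arises as the image of the Bockstein. Viewing $\sigma$ as a map $\sigma : X \to K(\ZZ/r,2)$, we have $\alpha = \sigma^*\beta$ where $\beta \in \Hoh^3(K(\ZZ/r,2),\ZZ)$ is the universal Bockstein class. This gives a map of Atiyah--Hirzebruch spectral sequences (Proposition~\ref{p:descSS}) from the AHSS for $\KU(K(\ZZ/r,2))_\beta$ to the AHSS for $\KU(X)_\alpha$, inducing the identity on the $(0,0)$-entry of the $\Eoh_2$-page.

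Next I would analyze the universal spectral sequence. Let $\IIEoh_n^{0,0} = m'_n\ZZ$ denote the kernel of the differentials out of $\Eoh_n^{0,0}$ through page $n$ for the $\beta$-twisted AHSS on $K(\ZZ/r,2)$; dimensional considerations preclude any incoming differentials to $\Eoh_n^{0,0}$, so $\IIEoh_n^{0,0} \subseteq \IIEoh_2^{0,0} = \ZZ$. The differential $d_n^\beta$ out of $\Eoh_n^{0,0}$ lands in a subquotient of $\Hoh^n(K(\ZZ/r,2),\ZZ(\tfrac{-n+1}{2}))$, which is $r$-primary for $n>0$. Since its image is isomorphic to the cyclic group $\ZZ/(m'_{n+1}/m'_n)$, each successive ratio $m'_{n+1}/m'_n$ has prime factors only among those of $r$. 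By induction every $m'_n$ has the same property.

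Finally I would pull back through $\Sigma$. Functoriality of the spectral sequence forces $\IEoh_n^{0,0} \supseteq \Sigma(\IIEoh_n^{0,0})$, so, writing $\IEoh_n^{0,0} = m_n\ZZ$, we obtain $m_n \mid m'_n$, and hence the primes dividing $m_n$ are among those dividing $r$. Because $\ind_\K(\alpha)$ is finite (as $\alpha$ lies in the Brauer group and is thus represented by an Azumaya algebra, giving a finite bound on $\Eoh_\infty^{0,0}$), the filtration stabilizes at some $\IEoh_N^{0,0} = \IEoh_\infty^{0,0} = \ind_\K(\alpha)\ZZ$ by Proposition~\ref{p:indKasPermCyc}, and the conclusion follows. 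The main subtlety will be justifying the $r$-primary property of $\Hoh^n(K(\ZZ/r,2),\ZZ)$ carefully from the mod-$\mathcal{C}_r$ Hurewicz theorem and universal coefficients, but this has already been arranged in the paragraph preceding the theorem.
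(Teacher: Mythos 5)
Your proposal is correct and follows essentially the same route as the paper's proof: lift $\alpha$ to $\sigma\colon X\to K(\ZZ/r,2)$, use the induced map of twisted Atiyah--Hirzebruch spectral sequences, show by induction that the generators $m'_n$ of the groups $\IIEoh^{0,0}_n$ on the universal example have only prime factors dividing $r$ (since the target of each $d_n$ is a subquotient of the $r$-primary group $\Hoh^n(K(\ZZ/r,2),\ZZ)$), use functoriality to deduce $m_n\mid m'_n$, and conclude from the finiteness of $\ind_\K(\alpha)$. No material differences.
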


\begin{corollary} \label{c:indPerPrimesAgree}
  If $X$ is a finite CW-complex, and if $\alpha \in \Br(X)$, then the primes dividing $\per(\alpha)$
  and $\ind(\alpha)$ agree.
\end{corollary}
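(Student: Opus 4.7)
The plan is to obtain this corollary almost immediately from the two results that immediately precede it. Theorem~\ref{t:PrimesPerIndAgree} already shows that for any CW-complex $X$ and any $\alpha \in \Br(X)$, the $K$-theoretic index $\ind_{\K}(\alpha)$ and the period $\per(\alpha)$ have the same prime divisors. What remains is to replace $\ind_{\K}(\alpha)$ by $\ind(\alpha)$ on the nose, and this is precisely where the finite CW-complex hypothesis enters.

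First I would invoke the lemma stating that for $X$ a finite CW-complex, $\ind_{\K}(\alpha) = \ind(\alpha)$; this reduction in turn came from Proposition~\ref{prop:ku0compact}, which identifies $\KU^0(X)_\alpha$ with the Grothendieck group of the category of $\mathcal{A}$-modules for any Azumaya algebra $\mathcal{A}$ representing $\alpha$. Hence over a finite CW-complex the rank map factors through $\mathcal{K}_0^\alpha(X)$ and $\KU^0(X)_\alpha$ in a compatible way, forcing the two images in $\ZZ$ to coincide.

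With that identification in hand, the corollary follows formally: a prime $p$ divides $\ind(\alpha)$ if and only if it divides $\ind_{\K}(\alpha)$, and by Theorem~\ref{t:PrimesPerIndAgree} the latter is equivalent to $p$ dividing $\per(\alpha)$. There is no real obstacle to overcome; the substance of the argument is concentrated in Theorem~\ref{t:PrimesPerIndAgree}, whose proof relied on the $r$-primary nature of $\Hoh^n(K(\ZZ/r,2),\ZZ)$ for $n>0$ combined with the map of Atiyah--Hirzebruch spectral sequences induced by a classifying map $\sigma : X \to K(\ZZ/r, 2)$. The finite CW-complex assumption here plays the single role of guaranteeing that the categorical and $K$-theoretic notions of index agree, so the proof can be dispatched in a single sentence referencing the lemma and the theorem.
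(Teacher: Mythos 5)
Your proposal is correct and matches the paper's argument exactly: the paper's proof is the one-liner that the corollary follows from Theorem~\ref{t:PrimesPerIndAgree} together with the lemma $\ind_{\K}(\alpha)=\ind(\alpha)$ for finite CW-complexes. Your additional commentary on why that lemma holds (via Proposition~\ref{prop:ku0compact}) is accurate but not needed for the deduction.
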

\begin{proof}
  This follows immediately from the theorem, since $\ind_\K(\alpha) = \ind(\alpha)$.
\end{proof}

\section{The period--index problem for topological spaces}\label{sec:perind}

The following theorem provides our main upper-bound.

\begin{theorem}\label{thm:bound}
    Suppose that $X$ is a $d$-dimensional CW-complex, and let $$\alpha\in\Hoh^3(X,\ZZ)_{\tors}.$$ Then, $\ind_{\K}(\alpha)$ is finite, and
    \begin{equation*}
        \ind_{\K}(\alpha)|\prod_{j=1}^{d-1}e_j^{\alpha}
    \end{equation*}
    where $e_j^{\alpha}$ is the exponent of $\pi_j^s(B\ZZ/\per(\alpha))$, where $\pi_j^s$ denotes stable homotopy.
    \begin{proof}
        Let $\sigma$ be a lift of $\alpha$ to $\Hoh^2(X,\ZZ/r)$, where $r=\per(\alpha)$. Then, by proposition~\ref{prop:twistedunit}, there is a
        map of presheaves of spectra $\sphere[\ZZ/r](X)_{\sigma}\rightarrow\KU(X)_{\alpha}$.
        Consider the spectral sequence of corollary~\ref{cor:morss}
        \begin{equation*}
            \Eoh_2^{p,q}=\Hoh^p(X,\pi_q\sphere[\ZZ/r])\Rightarrow\pi_{q-p}\sphere[\ZZ/r]^{\wedge}(X)_{\sigma}.
        \end{equation*}
        The differentials $d_k^\sigma$ are of degree $(k,k-1)$. The groups $\pi_q\sphere[\ZZ/r]$ are $\pi_q^s\oplus\pi_q^s B\ZZ/r$ by
        lemma~\ref{lem:msets}. Write $\ell_q^{\alpha}$ for the exponent of $\pi_q\sphere[\ZZ/r]$.  Because the dimension of $X$ imposes an
        upper bound on where we may find nonzero cohomology groups, the last possible non-zero differential coming from $\Eoh^{0,0}$ is
        $d_d$. Therefore we are concerned with $\pi_q\sphere[\ZZ/r]$ only up to $q=d-1$. The differential $d_k^{\sigma}$ leaving
        $\Eoh_r^{0,0}$ lands in a group of exponent at most $\ell_{k-1}^{\alpha}$. Therefore,
        \begin{equation*}
            \prod_{j=1}^{d-1}\ell_j^{\alpha}
        \end{equation*}
        is a permanent cycle in the twisted spectral sequence for
        $\sphere[\ZZ/m]^{\wedge}(X)_{\sigma}$. Since there is
        a morphism of the spectral sequence for $\sphere[\ZZ/m]^{\wedge}(X)_\sigma$ to $\KU(X)_{\alpha}$, it follows that the product
        is a permanent cycle in the twisted spectral sequence for $\KU(X)_{\alpha}$.
        By theorem~\ref{t:PrimesPerIndAgree}, $\ind_{\K}(\alpha)$ divides the $r$-primary part of
        \begin{equation*}
            \prod_{j=1}^{d-1}\ell_j^{\alpha}.
        \end{equation*}
        By the second part of lemma~\ref{lem:msets}, the $r$-primary part of $\ell_j^{\alpha}$ is the exponent of $\pi_j^s(B\ZZ/r)$.
    \end{proof}
\end{theorem}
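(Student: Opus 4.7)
The plan is to exploit the twisted unit map of proposition~\ref{prop:twistedunit} to compare the descent spectral sequence for $\sphere[\ZZ/r]$ with the Atiyah--Hirzebruch spectral sequence for twisted $\KU$, using that the homotopy groups of $\sphere[\ZZ/r]$ are finite (in positive degrees) while those of $\KU$ are not. Let $r=\per(\alpha)$, and choose a lift $\sigma\in\Hoh^2(X,\ZZ/r)$ of $\alpha$ under the unreduced Bockstein. Proposition~\ref{prop:twistedunit} provides a map of presheaves of spectra $\sphere[\ZZ/r](\cdot)_\sigma\to\KU(\cdot)_\alpha$, and corollary~\ref{cor:morss} upgrades this to a morphism of descent spectral sequences.

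Next I would identify $\ind_\K(\alpha)$ as the positive generator of $\Eoh_\infty^{0,0}\subseteq\Hoh^0(X,\ZZ)$ in the $\KU(\cdot)_\alpha$-spectral sequence via proposition~\ref{p:indKasPermCyc}. Working instead in the $\sphere[\ZZ/r]^\wedge(\cdot)_\sigma$ spectral sequence with $\Eoh_2^{p,q}=\Hoh^p(X,\pi_q\sphere[\ZZ/r])$ and differentials $d_k^\sigma$ of degree $(k,k-1)$, the only differentials that can leave the column $p=0$ and land in a nonzero group are $d_k^\sigma$ for $2\le k\le d$, since $\Hoh^p(X,\ZZ)$ vanishes for $p>d$. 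Write $\ell_j^\alpha$ for the exponent of $\pi_j\sphere[\ZZ/r]$. The target of $d_k^\sigma$ on $\Eoh_k^{0,0}$ is a subquotient of $\Hoh^k(X,\pi_{k-1}\sphere[\ZZ/r])$, a group of exponent at most $\ell_{k-1}^\alpha$. Inductively killing one differential at a time shows that $\prod_{j=1}^{d-1}\ell_j^\alpha$ is a permanent cycle in the $\sphere[\ZZ/r]^\wedge(\cdot)_\sigma$-spectral sequence.

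Transporting this permanent cycle across the morphism of spectral sequences, $\prod_{j=1}^{d-1}\ell_j^\alpha$ is also a permanent cycle in $\Eoh_\infty^{0,0}$ for $\KU(X)_\alpha$, so $\ind_\K(\alpha)$ divides $\prod_{j=1}^{d-1}\ell_j^\alpha$ and in particular is finite. To upgrade to the sharper divisibility by $\prod_{j=1}^{d-1}e_j^\alpha$, I invoke theorem~\ref{t:PrimesPerIndAgree}: only primes dividing $r$ can divide $\ind_\K(\alpha)$, so it suffices to take the $r$-primary part of each $\ell_j^\alpha$. By lemma~\ref{lem:msets}, $\pi_j\sphere[\ZZ/r]\iso\pi_j^s\oplus\pi_j^s(B\ZZ/r)$ and the Kahn--Priddy theorem identifies the $r$-primary part of $\pi_j^s$ inside $\pi_j^s(B\ZZ/r)$, so the $r$-primary part of $\ell_j^\alpha$ equals $e_j^\alpha$.

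The main obstacle I expect is ensuring that the comparison of spectral sequences is valid despite the fact that $\sphere[\ZZ/r](\cdot)_\sigma$ is only a presheaf and must be hypersheafified to $\sphere[\ZZ/r]^\wedge(\cdot)_\sigma$ before mapping to the hypersheaf $\KU(\cdot)_\alpha$; this is handled in corollary~\ref{cor:morss} and uses proposition~\ref{prop:descent}. The remaining care is combinatorial bookkeeping of which differentials can leave the $(0,0)$ spot given the dimension constraint on $X$, which is routine.
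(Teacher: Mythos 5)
Your proposal follows essentially the same route as the paper: the twisted unit map from proposition~\ref{prop:twistedunit}, the comparison of descent spectral sequences via corollary~\ref{cor:morss}, the exponent bound on differentials leaving $\Eoh^{0,0}$ imposed by the dimension of $X$, the reduction to the $r$-primary part via theorem~\ref{t:PrimesPerIndAgree}, and the final identification of that $r$-primary part using lemma~\ref{lem:msets} and Kahn--Priddy. Your argument is correct and there is no meaningful divergence from the paper's proof.
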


\begin{corollary}
    If $X$ is a finite CW-complex of dimension $d$, then theorem~\ref{thm:bound} holds with $\ind(\alpha)$ in place of $\ind_{\K}(\alpha)$.
    \begin{proof}
        This follows from lemma~\ref{lem:indkind}.
    \end{proof}
\end{corollary}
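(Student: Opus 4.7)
The plan is brief: the corollary is immediate once one has Theorem~\ref{thm:bound} (the divisibility statement for $\ind_{\K}(\alpha)$) together with the identification $\ind_{\K}(\alpha) = \ind(\alpha)$ valid on finite CW-complexes. So the proof is essentially a two-step invocation, and I expect no real obstacle.

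First, I would apply Theorem~\ref{thm:bound} directly: a finite CW-complex of dimension $d$ is in particular a $d$-dimensional CW-complex, so the hypotheses of the theorem are satisfied and we obtain
$$\ind_{\K}(\alpha) \,\Big|\, \prod_{j=1}^{d-1} e_j^{\alpha}.$$
No finiteness beyond finite dimensionality was needed to prove Theorem~\ref{thm:bound}, because that argument proceeds via the twisted unit map $\sphere[\ZZ/r](X)_{\sigma} \to \KU(X)_{\alpha}$ and a comparison of Atiyah--Hirzebruch spectral sequences, and it only bounds $\ind_{\K}$.

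Next, I would invoke the lemma immediately preceding the corollary, asserting that for finite CW-complexes one has $\ind_{\K}(\alpha) = \ind(\alpha)$. That equality rests on Proposition~\ref{prop:ku0compact}, which (via a Swan-type theorem for $\Gamma(X,\mathcal{A})$-modules) identifies $\KU^0(X)_{\alpha}$ with the Grothendieck group $\mathcal{K}_0^{\mathcal{A}}(X)$ of left $\mathcal{A}$-module bundles, combined with the Morita equivalence $\Vect^{\alpha} \simeq \Vect^{\mathcal{A}}$ of Propositions~\ref{prop:azsheafcor} and~\ref{prop:morita}. Together these show that the rank map out of $\mathcal{K}_0^{\alpha}(X)$ has the same image as the rank map out of $\KU^0(X)_{\alpha}$, so their positive generators agree. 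Substituting this equality into the divisibility above yields the corollary.

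In summary, the proof is a one-line composition, and the work has all been done earlier: the bound is the content of Theorem~\ref{thm:bound}, and the compatibility $\ind_{\K}=\ind$ on finite CW-complexes is packaged in the foundational material of Section~\ref{sec:twistk}. There is genuinely nothing hard here, and in particular the result would fail without the finiteness assumption, since in general one only has $\ind_{\K}(\alpha) \mid \ind(\alpha)$ by Lemma~\ref{lem:indkind}.
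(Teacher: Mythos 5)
Your proof is correct and follows the same route as the paper: apply Theorem~\ref{thm:bound} to get the divisibility for $\ind_{\K}(\alpha)$, then use the equality $\ind_{\K}(\alpha)=\ind(\alpha)$ for finite CW-complexes. You actually pinpoint the key ingredient more precisely than the printed proof: the paper cites lemma~\ref{lem:indkind}, which only gives the divisibility $\ind_{\K}(\alpha)\mid\ind(\alpha)$, whereas what is really needed is the subsequent (unlabeled) lemma asserting equality on finite CW-complexes, which you correctly invoke together with its proof via proposition~\ref{prop:ku0compact} and the Morita equivalence.
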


\begin{corollary}
    Let $X$ be a Stein space having the homotopy type of a finite CW-complex. Then
    theorem~\ref{thm:bound} holds with the analytic index in place of $\ind_{\K}(\alpha)$.
    \begin{proof}
        This follows from the Oka principle~\cite{grauert_remmert}.
    \end{proof}
\end{corollary}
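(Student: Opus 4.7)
The plan is to reduce the analytic statement to the topological statement on the same underlying space via the Oka principle, applied separately to the classification of Azumaya algebras and to the classification of vector bundles involved in the Brauer equivalence relation.

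First I would fix notation by defining the analytic index to be $\ind_{\an}(\alpha) = \gcd\{\deg(\mathcal{A}) : [\mathcal{A}] = \alpha, \ \mathcal{A} \text{ analytic Azumaya on } X\}$. The Grauert--Oka principle~\cite{grauert_remmert} states that for any complex Lie group $G$ and any Stein space $X$, the natural map $\Hoh^1_{\an}(X, G) \to \Hoh^1_{\topo}(X, G)$ classifying principal $G$-bundles is a bijection on isomorphism classes. Applied to $G = PU_n$, this produces, for each $n$, a bijection between isomorphism classes of analytic and topological degree-$n$ Azumaya algebras on $X$. In particular, the image of $\mathcal{A}$ under the vertical map $\Br(X_{\an}) \to \Br(X_{\topo})$ preserves the degree, since the degree is a discrete invariant read off from the $PU_n$-reduction.

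Next I would verify that the Oka correspondence respects Brauer equivalence, so that the two indices agree. Two analytic Azumaya algebras $\mathcal{A}_0, \mathcal{A}_1$ are Brauer equivalent if there exist analytic vector bundles $\mathcal{E}_0, \mathcal{E}_1$ with $\mathcal{A}_0 \otimes \End(\mathcal{E}_0) \iso \mathcal{A}_1 \otimes \End(\mathcal{E}_1)$. Applying the Oka principle to $G = GL_n$ identifies analytic and topological vector bundles of each rank on $X$, so Brauer equivalence of $\mathcal{A}_0, \mathcal{A}_1$ in the analytic category is equivalent to Brauer equivalence of their topological avatars. Consequently the set of degrees $\{\deg(\mathcal{A}) : [\mathcal{A}] = \alpha\}$ is the same in the analytic and topological categories, and
\begin{equation*}
    \ind_{\an}(\alpha) = \ind(\alpha_{\topo}).
\end{equation*}

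Finally, since $X$ has the homotopy type of a finite $d$-dimensional CW complex, and the index and the invariants $e_j^{\alpha}$ are homotopy invariants, the previous corollary (theorem~\ref{thm:bound} with $\ind$ in place of $\ind_\K$ on finite CW-complexes) gives
\begin{equation*}
    \ind(\alpha_{\topo}) \,\Big|\, \prod_{j=1}^{d-1} e_j^{\alpha},
\end{equation*}
yielding the claim. The only subtlety, and what I expect to be the main point to articulate carefully, is checking that the Oka principle applies in the exact form needed: both to $PU_n$ (to match up the algebras themselves, preserving degree and cohomological class in $\Hoh^3(X,\ZZ)_{\tors}$) and simultaneously to each $GL_m$ (to match the vector bundles witnessing Brauer equivalence), so that the analytic and topological gcd's coincide before any appeal to theorem~\ref{thm:bound}.
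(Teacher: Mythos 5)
Your proposal is correct and takes essentially the same approach as the paper's one-line appeal to the Oka principle: you have simply made explicit the two uses of Grauert's theorem — to $\PGL_n(\CC)$ (equivalently $PU_n$ on the topological side) to match degree-$n$ Azumaya algebras, and to $\GL_n(\CC)$ to match the vector bundles witnessing Brauer equivalence — so that $\ind_{\an}(\alpha)=\ind(\alpha_\topo)$ before invoking theorem~\ref{thm:bound}.
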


We now analyze the integers $e_j^{\alpha}$ in a certain range in the particular case where $\per(\alpha)$ is a prime-power $\ell^n$.

\begin{proposition}
    Let $0<k<2\ell-3$. Then the $\ell$-primary component $\pi_k^s(\ell)$ of $\pi_k^s$ is zero, and also
    \begin{equation*}
        \pi_{2l-3}^s(\ell)=\ZZ/\ell.
    \end{equation*}
    \begin{proof}
        A proof of this can be found in~\cite[Theorems~1.1.13-14]{ravenel_complex_1986}.
    \end{proof}
\end{proposition}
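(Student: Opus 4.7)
The plan is to read the vanishing statement and the identification of $\pi_{2\ell-3}^s(\ell)$ off the classical Adams spectral sequence at the prime $\ell$, whose $E_2$-page is
\begin{equation*}
    E_2^{s,t} = \Ext^{s,t}_{A_\ell^*}(\FF_\ell, \FF_\ell)
\end{equation*}
converging to $\pi_{t-s}^s$ localized at $\ell$, where $A_\ell^*$ denotes the mod-$\ell$ Steenrod algebra.

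First I would record the structure of $A_\ell^*$ at an odd prime $\ell$: it is generated by the Bockstein $\beta$ in degree $1$ and by the Steenrod powers $P^i$ in degrees $2i(\ell-1)$, so the lowest-degree generator above $\beta$ is $P^1$, in degree $2\ell-2$. A short argument with a minimal resolution (or the standard change-of-rings to the sub-Hopf algebra generated by $\beta$ and $P^1$) shows that $\Ext^{s,t}_{A_\ell^*}(\FF_\ell,\FF_\ell)$ vanishes in total degree $0 < t-s < 2\ell-3$: a class of positive stem must use at least one generator of positive degree other than $\beta$, and the first such generator contributes only in total degree $2\ell-3$. This handles the first half of the proposition, since an empty $E_2$-page forces trivial homotopy.

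For the second half I would exhibit the class $h_0 \in \Ext^{1,2\ell-2}_{A_\ell^*}(\FF_\ell,\FF_\ell)$ dual to $P^1$. For dimensional reasons there is no room for differentials either into or out of $h_0$ within the vanishing range, so $h_0$ is a permanent cycle detecting a class of order $\ell$, namely $\alpha_1$; a hidden-extension check (also forced by the surrounding vanishing) shows that $\pi_{2\ell-3}^s(\ell)$ is exactly $\ZZ/\ell$ rather than a larger cyclic group.

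The main obstacle is making the Ext vanishing and the detection of $\alpha_1$ precise without reproducing a substantial chunk of the Green book; the cleanest route is the change-of-rings reduction to the sub-Hopf algebra $E(\beta, P^1)$, where the computation is essentially trivial. The case $\ell = 2$ requires a separate remark: the asserted vanishing range $0 < k < 1$ is empty, and the identification $\pi_1^s(2) = \ZZ/2$ is the classical calculation of $\eta$, which the same Adams spectral sequence over $A_2^*$ produces from the Hopf class $h_1$.
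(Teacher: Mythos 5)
The paper's ``proof'' here is a pure citation to Ravenel's Green book~\cite[Theorems~1.1.13--14]{ravenel_complex_1986}; those theorems record exactly the low-dimensional $\ell$-primary stable stems, and Ravenel's own derivation is the Adams spectral sequence computation you sketch. So your proposal is correct and amounts to unwinding the cited reference rather than taking a different route. The outline is sound: at an odd prime the indecomposables of $\Ext^{1,*}_{A_\ell^*}(\FF_\ell,\FF_\ell)$ are $a_0$ (stem $0$) and $h_i$ (stem $2\ell^i(\ell-1)-1$), the lowest positive stem in $\Ext$ is therefore $2\ell-3$, and $h_0$ is a permanent cycle for degree reasons; the $\ell=2$ case must indeed be treated separately, exactly as you note.

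One small precision worth flagging: to conclude $\pi^s_{2\ell-3}(\ell)\iso\ZZ/\ell$ rather than a larger cyclic group you need not just ``no room for differentials'' but that $\Ext^{s,s+2\ell-3}=0$ for $s\geq 2$, equivalently $a_0 h_0 = 0$; this is an actual computation (visible in the cobar complex or via the May spectral sequence), not a vanishing forced purely by the surrounding zeros, so ``hidden-extension check'' slightly understates what must be verified. The change-of-rings reduction to the sub-Hopf algebra generated by $\beta$ and $P^1$ that you mention does handle this cleanly, and it is the standard way to make the sketch rigorous without reproducing the whole of Ravenel's Chapter~3.
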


\begin{proposition}
    For $0<k<2\ell-2$, the stable homotopy group $\pi_k^s(B\ZZ/\ell^n)$ is isomorphic to $\ZZ/\ell^n$ for $k$ odd and zero for $k$ even.
    \begin{proof}
        This is~\cite[Proposition~4.2]{antieau_index_2009}.
    \end{proof}
\end{proposition}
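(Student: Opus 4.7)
The plan is to compute $\pi_k^s(B\ZZ/\ell^n)$ directly using the Atiyah--Hirzebruch spectral sequence for stable homotopy, whose $E^2$-page is
\begin{equation*}
    E^2_{p,q} = \tilde\Hoh_p(B\ZZ/\ell^n, \pi_q^s) \Rightarrow \pi_{p+q}^s(B\ZZ/\ell^n),
\end{equation*}
together with the low-dimensional $\ell$-primary stem vanishing from the previous proposition. First I would recall the integral homology of $B\ZZ/\ell^n$: the reduced groups $\tilde\Hoh_p(B\ZZ/\ell^n, \ZZ)$ are $\ZZ/\ell^n$ when $p$ is odd and zero when $p$ is positive and even. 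In particular, these groups are $\ell$-primary torsion, so by the universal coefficient theorem every entry $E^2_{p,q}$ with $p > 0$ is $\ell$-primary torsion, and consequently $\pi_k^s(B\ZZ/\ell^n)$ is itself $\ell$-primary for $k > 0$. It therefore suffices to work $\ell$-locally throughout.

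Next I would restrict the $E^2$-page to the range $0 < k < 2\ell - 2$ and identify which bidegrees $(p,q)$ with $p + q = k$ and $p, q \geq 0$ can contribute nontrivially $\ell$-locally. By the previous proposition, $\pi_q^s$ vanishes $\ell$-locally for $0 < q < 2\ell - 3$, so the only candidate rows are $q = 0$ and $q = 2\ell - 3$. The $q = 2\ell - 3$ row would require $p = k - (2\ell - 3) \leq 0$ in our range, and $\tilde\Hoh_0 = 0$, so only the $q = 0$ row contributes. Thus $E^2_{k, 0} = \tilde\Hoh_k(B\ZZ/\ell^n, \ZZ)$, which is $\ZZ/\ell^n$ for $k$ odd and $0$ for $k$ even.

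The third step is to check that all differentials leaving $E^r_{k, 0}$ vanish $\ell$-locally. A differential $d^r \colon E^r_{k, 0} \to E^r_{k-r, r-1}$ requires $r \geq 2$ and $r \leq k$ to have a chance of hitting a nonzero group; in this range $r - 1 \leq k - 1 < 2\ell - 3$, so the target group is a subquotient of $\tilde\Hoh_{k-r}(B\ZZ/\ell^n, \pi_{r-1}^s) = 0$ $\ell$-locally. Similarly, no nontrivial differentials enter $E^r_{k, 0}$ because such a differential would originate from $E^r_{k+r, 1-r}$ with $1 - r < 0$. Therefore $E^\infty_{k, 0} = E^2_{k, 0}$, and since this is the only contribution to the associated graded of $\pi_k^s(B\ZZ/\ell^n)$, we get the claimed identification.

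No step looks substantially hard: the low-dimensional $\ell$-primary stem vanishing is the one nontrivial input, and it is supplied by the proposition just above. The only subtlety to watch is the bookkeeping at $q = 2\ell - 3$, where I use the strict inequality $k < 2\ell - 2$ to rule out a stray contribution (if we pushed to $k = 2\ell - 2$, the $(p, q) = (1, 2\ell - 3)$ spot would become relevant, so the bound $k < 2\ell - 2$ is sharp for this argument).
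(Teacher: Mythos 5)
Your argument is correct. The paper itself offers no proof of this proposition beyond a citation to Proposition~4.2 of \cite{antieau_index_2009}, so there is no in-paper argument to compare against; the Atiyah--Hirzebruch spectral sequence computation you give is the natural and standard route (and is, in substance, what the cited reference does). The bookkeeping is right: since $\tilde\Hoh_p(B\ZZ/\ell^n,\ZZ)$ is $\ZZ/\ell^n$ in positive odd degrees and vanishes in positive even degrees, the universal coefficient theorem makes every $E^2_{p,q}$ with $p>0$ an $\ell^n$-torsion group, and if $\pi_q^s$ is finite of order prime to $\ell$ then both $\ZZ/\ell^n\otimes\pi_q^s$ and $\Tor(\ZZ/\ell^n,\pi_q^s)$ vanish, so those rows contribute nothing. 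Combined with the previous proposition, this leaves only $E^2_{k,0}$ on total degree $k$ in the range $0<k<2\ell-2$, and the differential analysis you give (targets live in rows $q=r-1$ with $0<r-1<2\ell-3$, hence $\ell$-locally trivial; sources of incoming differentials live in negative $q$) shows $E^2_{k,0}=E^\infty_{k,0}$. Your remark that $k=2\ell-2$ is precisely where a stray $\ZZ/\ell$ from $\pi^s_{2\ell-3}$ could first appear at $(p,q)=(1,2\ell-3)$ is also right, and explains why the bound $k<2\ell-2$ is the natural stopping point for this argument.
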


\begin{theorem}\label{thm:toperind}
    Let $X$ be a $d$-dimensional CW-complex, let $\ell$ be a prime such that $2\ell>d+1$, and let $\alpha\in\Br'(X)=\Hoh^3(X,\ZZ)_{\tors}$ satisfy
    $\per(\alpha)=\ell^k$; then
    \begin{equation*}
        \ind_{\K}(\alpha)|\per(\alpha)^{[\frac{d}{2}]}.
    \end{equation*}
    \begin{proof}
        This follows immediately from theorem~\ref{thm:bound} and the previous two propositions.
    \end{proof}
\end{theorem}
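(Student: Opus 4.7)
The plan is to combine Theorem~\ref{thm:bound} with the two preceding propositions in a purely book-keeping fashion. First I would apply Theorem~\ref{thm:bound} to get
\[
\ind_{\K}(\alpha) \;\Big|\; \prod_{j=1}^{d-1} e_j^{\alpha},
\]
where $e_j^{\alpha}$ is the exponent of $\pi_j^s(B\ZZ/\ell^k)$, writing $\per(\alpha) = \ell^k$. The whole job is then to evaluate this product under the hypothesis $2\ell > d+1$.

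Next, I would observe that $2\ell > d+1$ is equivalent to $d-1 < 2\ell - 2$, so every index $j$ in the range $1 \le j \le d-1$ lies in the interval $0 < j < 2\ell - 2$ where the second proposition controls $\pi_j^s(B\ZZ/\ell^k)$ completely. That proposition yields
\[
\pi_j^s(B\ZZ/\ell^k) \;\cong\; \begin{cases} \ZZ/\ell^k & j \text{ odd}, \\ 0 & j \text{ even}, \end{cases}
\]
so $e_j^{\alpha} = \ell^k$ when $j$ is odd and $e_j^{\alpha} = 1$ when $j$ is even. A direct count shows the set $\{j : 1 \le j \le d-1,\; j\text{ odd}\}$ has exactly $[d/2]$ elements, as one sees by treating the parities $d = 2m$ and $d = 2m+1$ separately. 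The product therefore collapses to
\[
\prod_{j=1}^{d-1} e_j^{\alpha} \;=\; (\ell^k)^{[d/2]} \;=\; \per(\alpha)^{[d/2]},
\]
and the desired divisibility $\ind_{\K}(\alpha) \mid \per(\alpha)^{[d/2]}$ follows.

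There is no genuine obstacle here: the result is a mechanical consequence of Theorem~\ref{thm:bound} combined with the low-dimensional computation of $\pi_j^s(B\ZZ/\ell^k)$. The only point that requires any care is checking that the numerical hypothesis $2\ell > d+1$ is sharp enough to place every relevant degree $j \in \{1, \dots, d-1\}$ within the range in which the second proposition applies. The first proposition, giving the low-dimensional $\ell$-local stable stems of the sphere spectrum, does not enter the argument directly but stands behind the cited computation of $\pi_j^s(B\ZZ/\ell^k)$ (via the splitting $\pi_q\sphere[\ZZ/\ell^k] \cong \pi_q^s \oplus \pi_q^s(B\ZZ/\ell^k)$ from Lemma~\ref{lem:msets} and the Kahn--Priddy theorem).
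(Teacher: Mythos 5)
Your proposal is correct and is exactly the book-keeping the paper intends when it says the theorem "follows immediately from theorem~\ref{thm:bound} and the previous two propositions." The inequality check ($2\ell > d+1$ gives $d-1 < 2\ell-2$, placing all relevant $j$ in the range covered by the proposition computing $\pi_j^s(B\ZZ/\ell^k)$), the parity count yielding $[d/2]$ odd indices, and the observation that the first proposition enters only through the cited computation of $\pi_j^s(B\ZZ/\ell^n)$ are all precisely what the authors leave implicit.
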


\begin{corollary}
    Let $X$ be a finite CW-complex of dimension $d$, let $\ell$ be a prime such that $2\ell>d+1$, and suppose $\alpha\in\Br(X)=\Hoh^3(X,\ZZ)_{\tors}$ satisfies
    $\per(\alpha)=\ell^k$; then,
    \begin{equation*}
        \ind(\alpha)|\per(\alpha)^{[\frac{d}{2}]}.
    \end{equation*}
\end{corollary}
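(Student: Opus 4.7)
The plan is to obtain this corollary as an immediate consequence of Theorem~\ref{thm:toperind} combined with the identification $\ind_{\K}(\alpha)=\ind(\alpha)$ that holds on finite CW-complexes. The hypotheses on $X$, $\ell$, $d$, and $\alpha$ in the corollary match verbatim the hypotheses of Theorem~\ref{thm:toperind}, so I would first apply that theorem to deduce
\begin{equation*}
    \ind_{\K}(\alpha)\mid\per(\alpha)^{[d/2]}.
\end{equation*}

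Second, I would invoke the lemma immediately following Lemma~\ref{lem:indkind}, which asserts that when $X$ is a finite CW-complex, the canonical comparison $\mathcal{K}_0^{\mathcal{A}}(X)\to\KU^0(X)_{\alpha}$ of Proposition~\ref{prop:ku0compact} is an isomorphism, and hence $\ind_{\K}(\alpha)=\ind(\alpha)$. Substituting this equality into the divisibility above yields the stated bound.

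The step I would expect to be the main obstacle if it had not already been dealt with is the second one, since comparing $\ind_{\K}$ with $\ind$ requires the non-trivial identification of $\KU^0(X)_{\alpha}$ with the Grothendieck group of $\mathcal{A}$-modules; but this identification has been established in Proposition~\ref{prop:ku0compact} via Atiyah--Segal, and so no new work is required here. The corollary is essentially a bookkeeping restatement of Theorem~\ref{thm:toperind} under the additional finiteness hypothesis.
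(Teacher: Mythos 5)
Your proof is correct and is exactly the (implicit) argument in the paper: apply Theorem~\ref{thm:toperind} to get $\ind_{\K}(\alpha)\mid\per(\alpha)^{[d/2]}$, then use the lemma following Lemma~\ref{lem:indkind} (which rests on Proposition~\ref{prop:ku0compact}) to replace $\ind_{\K}(\alpha)$ by $\ind(\alpha)$ on a finite CW-complex. No further comments are needed.
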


\begin{corollary}
    Let $X$ be a $d$-dimensional Stein space having the homotopy type of a finite CW-complex, and let $\ell$ be a prime such that $2\ell>d+1$.
    If $\alpha\in\Br(X)$ satisfies $\per(\alpha)=\ell^k$; then,
    \begin{equation*}
        \ind(\alpha_{\an})|\per(\alpha)^{[\frac{d}{2}]},
    \end{equation*}
    where $\ind(\alpha_{\an})$ is the $\ell$-part of the greatest common divisor of the degrees of all analytic Azumaya algebras in the class $\alpha$.
\end{corollary}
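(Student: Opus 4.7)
The plan is to reduce this to the topological period--index bound of Theorem~\ref{thm:toperind} (and its previous finite-CW corollary) by invoking the Oka principle, exactly as was done earlier in the paper for the general upper bound.

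First, I would recall that for $X$ a Stein space with the homotopy type of a finite CW-complex, the Oka--Grauert principle \cite{grauert_remmert} asserts that for any complex Lie group $G$, the natural map from the set of holomorphic isomorphism classes of $G$-torsors on $X$ to the set of topological isomorphism classes is a bijection. Applying this with $G = PU_n$, one obtains that every topological Azumaya algebra of degree $n$ on $X$ is isomorphic to the underlying topological algebra of an analytic Azumaya algebra, and conversely any two analytic Azumaya algebras that are topologically isomorphic are already analytically isomorphic. In particular, the set of positive integers $n$ that arise as degrees of analytic Azumaya algebras representing $\alpha$ is identical to the set of $n$ that arise as degrees of topological Azumaya algebras representing $\alpha_{\topo}$.

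Next, since the class $\alpha \in \Br(X_{\an})$ has period $\ell^k$, its image $\alpha_{\topo} \in \Br(X_{\topo}) = \Hoh^3(X,\ZZ)_{\tors}$ has period dividing $\ell^k$; moreover, since $X$ is homotopy equivalent to a finite CW-complex, Serre's theorem (recalled in the introduction) gives $\Br(X_{\topo}) = \Br'(X_{\topo})$, and the period of $\alpha_{\topo}$ equals the period of $\alpha$ because the cohomological Brauer group is a homotopy invariant and the Oka correspondence preserves periods. Hence the hypothesis $2\ell > d+1$ with $\per(\alpha_{\topo}) = \ell^k$ places us exactly in the setting of the previous corollary, giving $\ind(\alpha_{\topo}) \mid \per(\alpha_{\topo})^{[d/2]} = \per(\alpha)^{[d/2]}$.

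Finally, I would combine these two observations: by the Oka correspondence, the greatest common divisor of degrees of analytic Azumaya algebras in $\alpha$ equals the greatest common divisor of degrees of topological Azumaya algebras in $\alpha_{\topo}$, i.e.\ $\ind(\alpha_{\topo})$. Taking $\ell$-primary parts on both sides yields $\ind(\alpha_{\an}) \mid \per(\alpha)^{[d/2]}$, as required. There is no genuine obstacle here; the only minor point to verify carefully is that the Oka principle truly identifies \emph{degrees} of representing Azumaya algebras, which follows because the bijection on torsors is induced by a fixed structure group $PU_n$ for each $n$ separately, and Azumaya algebras of degree $n$ correspond to $PU_n$-torsors in both the analytic and topological categories (via proposition~\ref{prop:azsheafcor} and its analytic analogue).
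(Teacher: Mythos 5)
Your proof follows the same route the paper implicitly takes: reduce to the finite-CW corollary via the Oka--Grauert principle, using the identification of analytic and topological $PU_n$-torsors on a Stein space. The Oka reasoning is correct, including the point about degrees at the end: since the bijection on torsors is taken for each structure group $PU_n$ separately, and Azumaya algebras of degree $n$ correspond to $PU_n$-torsors in both the analytic and topological categories, the sets of degrees representing $\alpha$ and $\alpha_{\topo}$ coincide, and hence so do the greatest common divisors.

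There is, however, one step your argument leaves unaddressed. In the statement, ``$d$-dimensional Stein space'' almost certainly means \emph{complex} dimension $d$, so the underlying space has real dimension $2d$. You invoke the preceding corollary, which requires a finite CW model of dimension $d$ --- not $2d$. The missing input is the theorem of Andreotti--Frankel (for Stein manifolds), extended by Hamm to singular Stein spaces, which says that a Stein space of complex dimension $d$ has the homotopy type of a CW complex of (real) dimension at most $d$. Only with this does the exponent $\bigl[\tfrac{d}{2}\bigr]$ (rather than $\bigl[\tfrac{2d}{2}\bigr]=d$) follow; the hypothesis $2\ell>d+1$ is then also consistent with the CW-dimension bound. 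If instead the authors intended $d$ to denote the dimension of the CW model itself, your proof is complete as written, but then the corollary would be essentially a restatement of the one before it, so the reading via complex dimension plus Andreotti--Frankel/Hamm is the more plausible one and should be made explicit.
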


\begin{remark}
    The Atiyah--Hirzebruch spectral sequence says that for $X$ a finite CW-complex of dimension at most $4$, $\per=\ind$. Therefore, the theorem
    is not sharp in general.
\end{remark}

\section{Lower Bounds on the Index}\label{sec:cohomology}

We first establish a number-theoretic result that will apply in the study of the cohomology of $PU_n$. We then consider the problem of
finding a degree-$n$ representative for a class in $\Br'(X) = \Hoh^3(X,\Z)_{\tors}$, which is the same as a factorization of $X \to K(\Z,3)$
as $X \to BPU_n \to K(\Z,3)$. We obtain a family of obstructions to such a factorization that can most easily be computed after application
of the reduced loopspace functor, $\Omega(\cdot)$, and we then use this family to furnish examples.

\subsection{Calculations in elementary number-theory}

\begin{definition}
  We define an integer-valued function $m:\NN \times \NN \to \NN$ by
  \begin{equation*}
    m(a,s) = \gcd \left\{ \binom{a}{i} \right\}_{i=1}^s
  \end{equation*}
  where the binomial coefficient $\binom{a}{i} = 0$ if $i >a $.
\end{definition}
We observe that $m(a,s)$ is a decreasing function of $s$, in that $m(a,s+1) | m(a,s)$ for all $s$, it begins with $m(a,1) = a$ and
stabilizes at $m(a,a) =1$.

In the following lemma and subsequently, the notation $[x]$ will be used to denote the integral part of the real number $x$.

\begin{lemma}
  Let $p$ be a prime number, and let $n$, $s$ be positive integers. Write $c=\max\{n-[\log_p s],
  0\}$, then $m(p^n,s) = p^c$.
\end{lemma}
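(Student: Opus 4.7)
The plan is to compute the $p$-adic valuation of $m(p^n,s)$ directly and show it has no other prime factors. First I would observe that the value $i=1$ yields $\binom{p^n}{1} = p^n$, so the gcd $m(p^n,s)$ automatically divides $p^n$ and is therefore a power of $p$. It remains to compute this power, which is $v_p(m(p^n,s)) = \min_{1 \le i \le \min(s,p^n)} v_p\!\left(\binom{p^n}{i}\right)$.

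The central computation is the classical identity
\begin{equation*}
   v_p\!\left(\binom{p^n}{i}\right) = n - v_p(i) \quad \text{for } 1 \le i \le p^n.
\end{equation*}
I would prove this by writing $\binom{p^n}{i} = \frac{p^n}{i}\binom{p^n-1}{i-1}$ and noting that $\binom{p^n-1}{i-1}$ is coprime to $p$: the base-$p$ digits of $p^n - 1$ are all equal to $p-1$, so Lucas's theorem gives $\binom{p^n-1}{i-1} \equiv \prod_j \binom{p-1}{d_j} \not\equiv 0 \pmod{p}$. Alternatively, Kummer's theorem (counting carries in $i + (p^n - i)$ base $p$) gives the same conclusion.

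With this identity in hand, the valuation of the gcd is
\begin{equation*}
   v_p(m(p^n,s)) = n - \max_{1 \le i \le \min(s,p^n)} v_p(i).
\end{equation*}
The maximum here is attained by the largest power of $p$ that fits in the range, namely $\min(n,[\log_p s])$. Hence $v_p(m(p^n,s)) = n - \min(n,[\log_p s]) = \max\{n - [\log_p s], 0\} = c$, which combined with the first observation gives $m(p^n,s) = p^c$.

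The only mildly subtle point is making sure that the range over which we minimize is correctly $1 \le i \le \min(s,p^n)$ (since $\binom{p^n}{i}=0$ is omitted from the gcd by the convention in the definition) and verifying the two cases $s \ge p^n$ and $s < p^n$ agree with the stated formula for $c$; both are straightforward. No serious obstacle is expected, as the argument rests entirely on the elementary valuation formula for $\binom{p^n}{i}$.
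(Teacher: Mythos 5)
Your proof is correct. The structure is the same as the paper's: observe that $m(p^n,s)$ divides $\binom{p^n}{1}=p^n$ and is therefore a prime power, then compute the minimum of $v_p\bigl(\binom{p^n}{i}\bigr)$ over the effective range $1\le i\le\min(s,p^n)$. The small difference is in how the valuation of $\binom{p^n}{i}$ is handled: the paper applies Kummer's theorem directly to $\binom{p^n}{k}$, obtaining the inequality $v_p\bigl(\binom{p^n}{k}\bigr)\ge n-[\log_p k]$ together with the observation that equality holds when $k$ is a power of $p$ — which is all it needs to identify the minimum. You instead prove the exact identity $v_p\bigl(\binom{p^n}{i}\bigr)=n-v_p(i)$ by factoring out $p^n/i$ and invoking Lucas on $\binom{p^n-1}{i-1}$ (with Kummer as an alternative), and then minimize. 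Your route is marginally cleaner in that it produces the exact valuation for every $i$ rather than a bound plus sporadic equalities, but both arguments rest on the same digit/carry analysis and converge immediately; the difference is cosmetic rather than substantive. Your handling of the $\binom{p^n}{i}=0$ convention and the two cases $s\ge p^n$ and $s<p^n$ is also correct.
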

\begin{proof}
  It suffices to determine the power of $p$ dividing $m(p^n, s)$.
  
  A theorem of Kummer \cite{Kummer_1852} says that the exponent of $p$ dividing $\binom{a +b}{b}$ is equal to the number of carries that
  arise in the addition of $a$ and $b$ in base-$p$ arithmetic. From this, we deduce that the exponent of $p$ dividing $\binom{p^n}{k}$ is at
  least $n-[\log_p k]$, although it may be more. For $k = p^j$, where $j \le n$, Kummer's result says this inequality is in fact an
  equality. Since the exponent of $p$ dividing $m(p^n, s)$ is the infimum of the exponents of $p$ dividing $\{\binom{p^n}{k} \}_{k=1}^s$,
  the stated result follows.
\end{proof}

\begin{lemma}
  Let $p$ be a prime number, and let $a$ be a positive integer relatively prime to $p$. Let $s$, $n$
  be  positive integers. Then the power of $p$ dividing $m(ap^n, s)$ coincides with the power of $p$
  dividing $m(p^n,s)$.
\end{lemma}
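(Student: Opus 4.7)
The plan is to show that both $v_p\bigl(m(ap^n,s)\bigr)$ and $v_p\bigl(m(p^n,s)\bigr)$ equal the same integer $c := \max\{n-[\log_p s],\,0\}$, which has already been computed for the second quantity in the previous lemma. Thus the task reduces to computing the $p$-adic valuation of $m(ap^n,s) = \gcd\{\binom{ap^n}{i}\}_{i=1}^{s}$ via a lower bound that is valid for all relevant $i$ and a matching upper bound obtained at a cleverly chosen $i^{\ast}$.

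For the lower bound, I would use the classical identity $i\binom{ap^n}{i} = ap^n\binom{ap^n-1}{i-1}$, which gives
\begin{equation*}
    v_p\!\left(\tbinom{ap^n}{i}\right) \;\geq\; v_p(ap^n) - v_p(i) \;=\; n - v_p(i).
\end{equation*}
Because $p^{v_p(i)}\mid i\leq s$, one has $v_p(i)\leq[\log_p s]$, so $v_p\bigl(\binom{ap^n}{i}\bigr)\geq n-[\log_p s]$; combined with non-negativity of valuations this gives $v_p(m(ap^n,s))\geq c$.

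For the matching upper bound, I would take $i^{\ast}=p^{j}$ with $j=\min\{[\log_p s],\,n\}$, which satisfies $i^{\ast}\leq s$, and compute $v_p\bigl(\binom{ap^n}{p^j}\bigr)$ using Kummer's theorem, as in the previous lemma: the valuation equals the number of carries when $p^j$ and $ap^n-p^j$ are added in base $p$. The crucial point is that $\gcd(a,p)=1$ makes the $0$-th digit $a_0$ of $a$ nonzero, so subtracting $p^j$ from $ap^n$ (whose bottom $n$ base-$p$ digits are all zero) triggers a borrow at position $j$, which propagates through positions $j+1,\ldots,n-1$ and then terminates at position $n$ by decrementing $a_0$. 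Adding back $p^j$ produces carries at precisely positions $j,j+1,\ldots,n-1$, for a total of $n-j$ carries; hence $v_p\bigl(\binom{ap^n}{p^j}\bigr)=n-j=c$. I expect the main (small) obstacle to be the careful base-$p$ carry bookkeeping in this last step; alternatively, one can package the same content cleanly through Lucas' theorem, applied to $\binom{ap^n-1}{p^j-1}$, whose digits of numerator and denominator line up so that $\binom{ap^n-1}{p^j-1}\equiv 1\pmod p$, and then use $\binom{ap^n}{p^j}=\frac{ap^n}{p^j}\binom{ap^n-1}{p^j-1}$ to read off $v_p=n-j$ directly.
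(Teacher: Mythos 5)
Your argument is correct, and it takes a genuinely different route from the paper's. The paper first reduces to the range $s \le p^n$, then compares $\binom{ap^n}{k}$ with $\binom{p^n}{k}$ term by term: writing both as $\prod_{r=0}^{k-1}(\,\cdot\, - r)/k!$ and observing that for $0 \le r < k \le s \le p^n$ one has $v_p(ap^n - r) = v_p(p^n - r)$ (for $r\neq 0$ both equal $v_p(r) < n$; for $r=0$ both equal $n$ since $\gcd(a,p)=1$), so that the two binomial coefficients have the same $p$-adic valuation for every $k$, and hence so do the gcds. That argument never needs to identify the common value of the valuation and leans entirely on the first lemma for the formula. You instead compute $v_p\bigl(m(ap^n,s)\bigr)$ from scratch: a uniform lower bound $v_p\binom{ap^n}{i}\geq n - v_p(i) \geq n - [\log_p s]$ from the identity $i\binom{ap^n}{i}=ap^n\binom{ap^n-1}{i-1}$, and a matching upper bound achieved at $i^*=p^{\min\{[\log_p s],\,n\}}$ via Kummer or Lucas. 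Your approach effectively reproves the first lemma in the more general case $a\geq 1$, so it unifies the two lemmas into a single computation at the cost of redoing the base-$p$ carry bookkeeping; the paper's term-by-term comparison is a touch more economical because it piggybacks on the first lemma and avoids invoking Kummer a second time. Both are fine; if you present yours, be slightly careful in the Kummer step that the carry count is $n-j$ even at the boundary case $j=n$ (where no carries occur because $a_0\in\{1,\dots,p-1\}$ ensures the top digit doesn't overflow).
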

\begin{proof}
  It suffices to prove this for $s \le p^n$, because $m(p^n, p^n) = 1$, and $m(ap^n, s+1) | m(ap^n,
  s)$ for all $s$, so that if the result holds for $s= p^n$, it will hold trivially for $s > p^n$.

  Assume therefore that $s \le p^n$. We claim that the power of $p$ dividing $\binom{ap^n}{k}$ coincides with that
  dividing $\binom{p^n}{k}$ for all $k \le s$. If $0 \le r < k$, then $ap^n - r$ is not
  divisible by $p^{n+1}$, for otherwise we should have $r$ divisible by $p^n$, which can happen
  within the constraint $r < p^n$ only if $r =0$, in which case $ap^n - r = ap^n$ which is not divisible
  by $p^{n+1}$ by assumption.

  In the binomial expansion
  \begin{equation*}
    \binom{ ap^n}{k} = \frac{ (ap^n)(ap^n -1) \dots (ap^n - k +1) }{k!} 
  \end{equation*}
  the power of $p$ dividing a term $ap^n - r$ on the top row is no greater than $p^n$, and therefore
  coincides with the power of $p$ dividing $r$, and also with the power of $p$ dividing
  $p^n-r$. Comparing, term-by-term, we see that the power of $p$ dividing $\binom{ ap^n}{k}$
  agrees with that dividing
  \begin{equation*}
    \binom{p^n}{k} = \frac{ p^n(p^n-1) \dots (p^n - k+1)}{k!}
  \end{equation*}
  It follows that the power of $p$ dividing $m(ap^n, s)$ is the same as that dividing $m(p^n, s)$,
  as claimed.
\end{proof}

\begin{corollary} \label{c:descofm}
  For a positive integer $a$, with prime factorization $a = p_1^{n_1} p_2^{n_2} \dots p_r^{n_r}$,
  and for a given positive integer $s$, we have
  \begin{equation*}
    m(a,s) = \prod_{i=1}^r p_i^{ \max \{ n_i - [\log_{p_i} s], 0 \} }
  \end{equation*}
\end{corollary}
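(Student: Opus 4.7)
The plan is to assemble the corollary from the two preceding lemmas together with the observation that $m(a,s) \mid a$. Since $m(a,s)$ is the gcd of a set of integers including $\binom{a}{1} = a$, every prime dividing $m(a,s)$ must already be one of $p_1,\ldots,p_r$. Hence it suffices to compute the $p_i$-adic valuation of $m(a,s)$ for each $i$ separately and reassemble by unique factorization.

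Fix an index $i$ and write $a = b_i\, p_i^{n_i}$ where $\gcd(b_i, p_i) = 1$. The second lemma applies directly: the power of $p_i$ dividing $m(b_i\, p_i^{n_i}, s) = m(a,s)$ coincides with the power of $p_i$ dividing $m(p_i^{n_i}, s)$. The first lemma then identifies the latter as $p_i^{\max\{n_i - [\log_{p_i} s], 0\}}$.

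Combining the per-prime contributions gives
\[
m(a,s) \;=\; \prod_{i=1}^{r} p_i^{\max\{n_i - [\log_{p_i} s],\, 0\}},
\]
as claimed. The only mild subtlety is noting that primes outside $\{p_1,\ldots,p_r\}$ contribute trivially, which is immediate from $m(a,s) \mid a$; beyond that, the corollary is a purely formal packaging of the two lemmas and should require no further calculation.
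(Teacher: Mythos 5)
Your proof is correct and follows the same approach as the paper: observe that $m(a,s) \mid m(a,1) = a$ so only the $p_i$ can appear, then apply the second lemma to reduce the $p_i$-adic valuation of $m(a,s)$ to that of $m(p_i^{n_i}, s)$, which the first lemma computes. The paper gives this same argument, only more tersely.
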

\begin{proof}
  Once one observes that the only prime factors $m(a,s)$ may have are the primes dividing $m(a,1) =
  a$, this result follows immediately from the previous two lemmas.
\end{proof}

\begin{definition}
  Let $b, s$ be positive integers. Define $n(b,s)$ as follows. Write $b = \prod_{i=1}^r p_i^{n_i}$, where the $p_i$ are unique primes and the $n_i$
  are positive integers. Define
  \begin{equation*}
    n(b,s) = \prod_{i=1}^r p_i^{n_i + [\log_{p_i} s]}
  \end{equation*}
\end{definition}

\begin{corollary} \label{c:useofn}
  Let $b$, $a$ and $s$ be positive integers. If $b \mid m(a,s)$, then $n(b, s) \mid a$.
\end{corollary}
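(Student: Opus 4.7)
The plan is to reduce to a prime-by-prime verification using the explicit formula for $m(a,s)$ provided by Corollary \ref{c:descofm}.

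First I would fix the prime factorizations. Write $b = \prod_{i=1}^r p_i^{n_i}$ as in the definition of $n(b,s)$, and write $a = \prod_j q_j^{m_j}$ for the prime factorization of $a$. By Corollary \ref{c:descofm}, we have the exact formula
\begin{equation*}
   m(a,s) = \prod_j q_j^{\max\{m_j - [\log_{q_j} s],\, 0\}}.
\end{equation*}
The hypothesis $b \mid m(a,s)$ then immediately forces two things: (i) every prime $p_i$ dividing $b$ appears among the $q_j$, i.e.\ divides $a$, and (ii) for each such prime the exponent condition $n_i \le \max\{m_i - [\log_{p_i} s],\, 0\}$ must hold, where $m_i$ now denotes the exponent of $p_i$ in $a$.

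Next, since $n_i \ge 1 > 0$, the maximum is realized by the first argument, so the exponent condition simplifies to $m_i \ge n_i + [\log_{p_i} s]$. This says precisely that $p_i^{n_i + [\log_{p_i} s]}$ divides $a$. Because the primes $p_i$ are distinct, these divisibility statements can be assembled into
\begin{equation*}
    \prod_{i=1}^r p_i^{n_i + [\log_{p_i} s]} \;\Big|\; a,
\end{equation*}
and the left-hand side is by definition $n(b,s)$, which gives the claim.

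There is essentially no obstacle here: the whole content of the statement is the prime-by-prime translation between the additive exponent bound extracted from Corollary \ref{c:descofm} and the multiplicative definition of $n(b,s)$. The only minor care required is to note that $n_i \ge 1$ so that the $\max$ in the formula for $m(a,s)$ can be removed without loss.
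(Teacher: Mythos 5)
Your proof is correct and follows essentially the same route as the paper: both reduce to a prime-by-prime comparison of exponents using the explicit formula from Corollary \ref{c:descofm}, and both use $n_i \ge 1$ to discard the $\max$ with $0$. If anything you are a bit more careful than the paper's own one-line version, which compresses the exponent comparison and has a small slip (it writes a strict inequality $n_i + [\log_{p_i} s] < n_i'$ where only $\le$ is warranted, though the conclusion is unaffected).
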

\begin{proof}
  Write $b = \prod_{i=1}^r p_i^{n_i}$ as in the definition of $n(b,s)$. Write $a = a'\prod_{i=1}^r p_i^{n_i'}$, where $a'$ is relatively prime to $b$
  and where the $n_i'$ are not necessarily positive. Then $p_i^{n_i} \mid p_i^{n_i' - \max\{ 0, [\log_{p_i} s]\}}$, so that in particular $n_i +
  [\log_{p_i} s] < n_i'$, from which the claim follows.
\end{proof}

Note that for fixed $b$, $\lim_{s \to \infty} n(b,s)  = \infty$.

\subsection{Obstructions arising from the cohomology suspension}\label{sec:4.2}

In this section we obtain obstructions to factorizations $X \to BPU_a \to K(\ZZ,3)$ of $a$-torsion
classes by finding obstructions that survive an application of the loop-space functor $\Omega(X) \to
PU_a \to K(\ZZ,2)$. Not only is the integral cohomology of $PU_a$ easier to describe than
that of $BPU_a$, it also happens that the product structure on the cohomology of $PU_a$ 
yields a number of obstructions without our having to resort to cohomology operations.

To this end, we first describe what we need of the cohomology of $PU_a$. Thereafter the description
of lower bounds on the index in theorem \ref{t:ms5} is straightforward, and it is easy to use these
bounds to give examples where the index is arbitrarily large compared to the period.

For an element $\zeta$ in an abelian group $H$, let $\ord(\zeta)$ denote the order of $\zeta$ in $H$.

\begin{proposition} \label{p:cohoPUn}
  With integer coefficients, $\Hoh^1(PU_a, \ZZ) = 0$ and $\Hoh^2(PU_a, \ZZ) = \ZZ/a$. Fix a
  generator, $\eta \in \Hoh^2(PU_a, \ZZ)$.
    The following identity holds:
    \begin{equation*}
        \ord(\eta^s)=m(a,s),
    \end{equation*}
    In particular, $\ord(\eta^a)=1$, so $\eta^a=0$.
    \begin{proof}
        Consider the fibration
        \begin{equation*}
            U_a\rightarrow PU_a\rightarrow BS^1.
        \end{equation*}
        The integral cohomology of $U_a$ is
        \begin{equation*}
            \Hoh^*(U_a,\ZZ)=\Lambda_{\ZZ}(\alpha_1,\ldots,\alpha_a),
        \end{equation*}
        an exterior algebra in $a$ generators with the degree of $\alpha_s$ being $2s-1$, while
        \begin{equation*}
            \Hoh^*(BS^1,\ZZ)=\ZZ[\theta],
        \end{equation*}
        where the degree of $\theta$ is $2$. Consider the spectral sequence of the fibration
        \begin{equation*}
            \Hoh^p(BS^1,\Hoh^q(U_a,\ZZ))\Rightarrow\Hoh^{p+q}(PU_a,\ZZ).
        \end{equation*}
        Then \cite[Theorem~4.1]{baum-browder}
        says that $d_{2s}(\alpha_s)={a\choose s}\theta^s$. By induction and the algebra structure
        on the spectral sequence, it follows that the class $\eta^s$, which is the image of
        $\theta^s$, has
        order as stated. It also follows from the spectral sequence that $\Hoh^1(PU_a,\ZZ)=0$ and $\Hoh^2(PU_a, \ZZ) =
        (\ZZ/a)\eta$, as claimed.
    \end{proof}
\end{proposition}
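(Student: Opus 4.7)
The plan is to extract all three statements from the Serre spectral sequence of the principal fiber sequence
\[
U_a \longrightarrow PU_a \longrightarrow BU_1,
\]
since $PU_a = U_a/S^1$ where $S^1 \subset U_a$ is the center. The cohomology of the fiber is the exterior algebra $\Lambda_{\ZZ}(\alpha_1,\ldots,\alpha_a)$ with $|\alpha_s| = 2s-1$, and the base cohomology is $\ZZ[\theta]$ with $|\theta|=2$, so the $E_2$-page is a polynomial algebra in $\theta$ tensored with an exterior algebra. I would first quote or prove (via Baum--Browder, which the statement explicitly invokes) the fundamental transgression $d_{2s}(\alpha_s) = \binom{a}{s} \theta^s$, which is the only real input beyond formalities. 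Multiplicativity then propagates this to $d_{2s}(\alpha_s \cdot \theta^{t}) = \binom{a}{s}\theta^{s+t}$ for all $t\ge 0$.

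Next I would read off the low-dimensional cohomology. The edge differential $d_2(\alpha_1) = a\theta$ immediately kills $E_2^{0,1} = \ZZ\{\alpha_1\}$ (so $H^1(PU_a) = 0$, since $E_2^{1,0}$ already vanishes) and reduces $E_2^{2,0} = \ZZ\{\theta\}$ to $E_3^{2,0} = (\ZZ/a)\{\theta\}$. No further differentials can hit or leave position $(2,0)$, so $H^2(PU_a) = \ZZ/a$, generated by the class $\eta$ corresponding to $\theta$ via the edge map.

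To compute $\ord(\eta^s)$, I would study the bottom row position $(2s, 0)$. Every differential $d_{2k}$ for $1\le k \le s$ contributes a boundary $\binom{a}{k}\theta^s$ from $\alpha_k \theta^{s-k}$, so the cumulative image of incoming differentials is the subgroup of $\ZZ\{\theta^s\}$ generated by $\{\binom{a}{k}\}_{k=1}^{s}$, which by definition is $m(a,s)\cdot \ZZ\{\theta^s\}$. No differentials leave $(2s,0)$ since their targets would have negative fiber degree. Hence $E_\infty^{2s, 0} \iso \ZZ/m(a,s)$.

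The final step is the translation from $E_\infty$ back to the genuine cohomology group. The filtration on $H^{2s}(PU_a)$ has its lowest piece $F^{2s+1} = 0$ for dimensional reasons ($E_\infty^{p,q}=0$ for $q<0$), so the quotient $F^{2s}/F^{2s+1} = E_\infty^{2s,0}$ is actually a subgroup. By multiplicativity, $\eta^s$ lies in $F^{2s}$ and maps to the class of $\theta^s$ in $E_\infty^{2s,0}$; consequently $\ord(\eta^s) = m(a,s)$, with the case $s=a$ giving $m(a,a)=1$ since $\binom{a}{a}=1$. The only genuinely nontrivial ingredient is the Baum--Browder computation of the transgression; the rest is bookkeeping with the multiplicative structure and the observation that the relevant filtration piece is a subgroup rather than merely a subquotient.
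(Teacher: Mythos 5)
Your argument is correct and follows essentially the same path as the paper's: the Serre spectral sequence of $U_a \to PU_a \to BS^1$, with the Baum--Browder transgression $d_{2s}(\alpha_s) = \binom{a}{s}\theta^s$ as the key external input, propagated by multiplicativity and then read off along the bottom row. Your final paragraph — observing that $E_\infty^{2s,0}$ is a subgroup of $H^{2s}(PU_a,\ZZ)$ rather than a mere subquotient because the filtration has $F^{2s+1}=0$ on the bottom row, and that $\eta^s$ is represented by $\theta^s$ there — spells out the ``algebra structure'' step that the paper leaves to the reader, but it is the same proof.
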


Recall that if $\alpha \in \rHoh^n(X, A)$, then $\alpha$ may be represented as a based map $X \to K(A,
n)$. Applying the reduced loopspace functor, $\Omega(\cdot)$, one obtains a natural transformation
of functors
\begin{equation*}
  \Omega: \rHoh^n( \cdot, A) \to \rHoh^{n-1}(\Omega(\cdot), A)
\end{equation*}
This natural transformation is termed the \textit{cohomology suspension\/}.

From the Serre spectral sequence associated with the fibration $PU_a \to EPU_a \to BPU_a$ it follows that $\Hoh^3(BPU_a, \ZZ)$ is isomorphic
to the group $\ZZ/a$, generated by a class $\tilde{\eta}$, which may be chosen in such a way that $\Omega(\tilde{\eta}) : \Omega(BPU_a) \to
\Omega(K(\ZZ,3))$ is the distinguished generator $\eta$.

\begin{theorem} \label{t:ms5}
    Let $\tilde{\alpha} \in\Hoh^3(X,\ZZ)$ be a cohomology class, and let $\alpha\in\Hoh^2(\Omega
    X,\ZZ)$ be the cohomology suspension of this class. If $\tilde{\alpha}:X\rightarrow K(\ZZ,3)$ factors through $\tilde{\eta}: BPU_a\rightarrow K(\ZZ,3)$
    then $\alpha^s$ is $m(a,s)$-torsion for all $s \ge 1$.
\end{theorem}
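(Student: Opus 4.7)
The plan is to translate the hypothesis into a factorization, apply the loopspace functor, and then exploit the naturality of the cohomology suspension together with the order computation in Proposition~\ref{p:cohoPUn}.

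First I would unpack the hypothesis. A factorization of $\tilde{\alpha}: X \to K(\ZZ, 3)$ through $\tilde{\eta}: BPU_a \to K(\ZZ, 3)$ means there is a map $f: X \to BPU_a$ with $\tilde{\eta} \circ f \simeq \tilde{\alpha}$. Applying the (reduced) loopspace functor yields $\Omega f: \Omega X \to \Omega BPU_a \simeq PU_a$, and the composite $\Omega(\tilde{\eta}) \circ \Omega f$ represents a map $\Omega X \to \Omega K(\ZZ, 3) \simeq K(\ZZ, 2)$.

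Next I would invoke naturality of the cohomology suspension together with the normalization noted just before the theorem: the class $\tilde{\eta} \in \Hoh^3(BPU_a, \ZZ)$ is chosen so that $\Omega(\tilde{\eta}) = \eta \in \Hoh^2(PU_a, \ZZ)$. Naturality gives
\begin{equation*}
    \alpha \;=\; \Omega(\tilde{\alpha}) \;=\; \Omega(\tilde{\eta} \circ f) \;=\; \Omega(\tilde{\eta}) \circ \Omega f \;=\; (\Omega f)^{\ast}\eta.
\end{equation*}
Since the pullback $(\Omega f)^{\ast} : \Hoh^{\ast}(PU_a, \ZZ) \to \Hoh^{\ast}(\Omega X, \ZZ)$ is a ring homomorphism, we obtain $\alpha^s = (\Omega f)^{\ast}(\eta^s)$ for every $s \geq 1$.

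Finally I would apply Proposition~\ref{p:cohoPUn}: the class $\eta^s \in \Hoh^{2s}(PU_a, \ZZ)$ has order exactly $m(a,s)$, so $m(a,s) \cdot \eta^s = 0$. Pulling this identity back along $\Omega f$ yields $m(a,s) \cdot \alpha^s = 0$, so $\alpha^s$ is $m(a,s)$-torsion, as claimed. There is no serious obstacle here; the only point requiring care is to fix conventions so that the cohomology suspension commutes with cup products in the required way and that $\Omega(\tilde{\eta}) = \eta$ under the chosen generators, both of which are already arranged in the discussion preceding the theorem.
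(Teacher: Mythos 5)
Your proof is correct and follows the same route as the paper: apply $\Omega$ to the factorization, use naturality to write $\alpha=(\Omega f)^*\eta$, and pull back the order computation from Proposition~\ref{p:cohoPUn}. One small caveat: your closing remark about the cohomology suspension ``commuting with cup products'' is not what the argument uses (the suspension in general kills decomposables); what you actually need, and correctly use, is that the pullback $(\Omega f)^*$ is a ring map, so $\alpha^s=(\Omega f)^*(\eta^s)$.
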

\begin{proof}
  If $\tilde{\alpha}: X \to K(\ZZ, 3)$ factors as $X \to BPU_a \to K(\ZZ,3)$, then applying
  $\Omega(\cdot)$ shows that $\alpha : \Omega(X) \to K(\ZZ, 2)$
  factors through $\eta: PU_a \to K(\ZZ,2)$. The result now follows from our previous determination of the order of $\eta$.
\end{proof}

\begin{remark}
    For $s=1$, this theorem indicates that $\alpha$ is $a$-torsion. For $s=a$, it means that $\alpha^a$ is $1$-torsion, i.e.~zero, so
    $\alpha$ must in particular be nilpotent.
\end{remark}

  Let $\beta$ denote the \textit{unreduced Bockstein}, any one of the natural transformations of cohomology groups that appears as the boundary map in the
  long exact sequence
  \begin{equation*}
    \xymatrix{ \ar[r] & \rHoh^i(X, \ZZ) \ar^{\times r}[r] & \rHoh^i(X, \ZZ) \ar[r] & \rHoh^i(X, \ZZ/r) \ar^{\beta}[r] & \rHoh^{i+1}(X, \ZZ)
      \ar[r] & }
  \end{equation*}
  Each unreduced Bockstein is a natural transformation of cohomology functors, and consequently may be represented by a map $\beta: K(\ZZ/r, i)
  \to K(\ZZ, i+1)$

\begin{corollary} \label{c:kxz}
    The Bockstein $\beta:K(\ZZ/r,2)\rightarrow K(\ZZ,3)$ does not factor through any $BPU_a\rightarrow K(\ZZ,3)$.
    \begin{proof}
      The following argument appears for $r=2$ in Atiyah-Segal~\cite[proof of Proposition 2.1.v]{atiyah-segal}.
      The class $\beta$ has additive order $r$. Since $K(\ZZ/r, 2)$ is simply-connected and since $\Hoh_2(K(\ZZ/r, 2), \ZZ)$ is torsion, it
      follows from the universal-coefficients theorem that $\rHoh^i(K(\ZZ/r, 2), \ZZ) =0$ for $i < 3$. The usual arguments~\cite{mosher_tangora_1968} used to show that
      the cohomology suspension is an isomorphism apply in this instance to show that the cohomology suspension
      $\Hoh^3(K(\ZZ/r, 2), \ZZ) \to \Hoh^2(K(\ZZ/r,1), \ZZ)$ is an isomorphism. In particular $\xi= \Omega(\beta)$ is a generator of
      $\Hoh^2(K(\ZZ/r, 1), \ZZ) = (\ZZ/r)\xi$. It is generally known that $\Hoh^*(K(\ZZ/r,1), \ZZ) = \Hoh^*(B\ZZ/r, \ZZ)$ is
      $\ZZ[\xi]/(r\xi)$, see \cite [Chapter 6]{weibel_introduction_1994}, from which it follows that $\ord(\xi^n) = r$ for all $n\ge 1$. In
      particular $\xi$ is not nilpotent in $\Hoh^*(K(\ZZ/r, 1), \ZZ)$.
    \end{proof}
\end{corollary}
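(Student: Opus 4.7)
The plan is to apply Theorem~\ref{t:ms5} to the cohomology suspension $\xi=\Omega(\beta)\in\Hoh^2(K(\ZZ/r,1),\ZZ)$ and derive a contradiction with the fact that $\xi$ is not nilpotent. I would argue by contradiction: assume that $\beta:K(\ZZ/r,2)\to K(\ZZ,3)$ factors through some $\tilde{\eta}:BPU_a\to K(\ZZ,3)$. Applying the (reduced) loopspace functor and using that $\Omega(\tilde{\eta})=\eta$, I obtain a factorization of $\xi$ through $\eta:PU_a\to K(\ZZ,2)$. By Theorem~\ref{t:ms5}, $\xi^s$ is $m(a,s)$-torsion for every $s\ge 1$; taking $s=a$ and using $m(a,a)=1$ yields $\xi^a=0$.

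The next step is to identify $\xi$ explicitly as a generator of $\Hoh^2(K(\ZZ/r,1),\ZZ)\iso\ZZ/r$. First I would show $\rHoh^i(K(\ZZ/r,2),\ZZ)=0$ for $i<3$: since $K(\ZZ/r,2)$ is simply-connected with torsion $\Hoh_2$, the universal coefficient theorem gives the vanishing. Standard transgression/suspension arguments (see \cite{mosher_tangora_1968}) then show that in this range the cohomology suspension
\begin{equation*}
    \Omega:\Hoh^3(K(\ZZ/r,2),\ZZ)\to\Hoh^2(K(\ZZ/r,1),\ZZ)
\end{equation*}
is an isomorphism, so $\xi=\Omega(\beta)$ is indeed a generator of $\Hoh^2(K(\ZZ/r,1),\ZZ)=(\ZZ/r)\xi$.

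Finally, I would invoke the well-known ring computation $\Hoh^*(B\ZZ/r,\ZZ)=\ZZ[\xi]/(r\xi)$ (see \cite[Chapter~6]{weibel_introduction_1994}). From this presentation one reads off $\ord(\xi^n)=r$ for all $n\ge 1$, so in particular $\xi^a\neq 0$, contradicting the conclusion of the first paragraph.

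The only delicate point is the middle step: verifying that the cohomology suspension is an isomorphism in the relevant degree and that it carries $\beta$ to the standard polynomial generator $\xi$ of $\Hoh^*(B\ZZ/r,\ZZ)$ rather than to some other generator of $\Hoh^2(K(\ZZ/r,1),\ZZ)$. Up to a unit scalar in $(\ZZ/r)^{\times}$ this is automatic, and since nilpotence of $\xi^n$ depends only on the ideal generated by $\xi$, any such unit rescaling is harmless; the rest of the argument is routine.
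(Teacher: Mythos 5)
Your proposal is correct and follows essentially the same route as the paper's own proof: both compute $\rHoh^i(K(\ZZ/r,2),\ZZ)$ for $i<3$, use the cohomology suspension to identify $\Omega(\beta)$ with the polynomial generator $\xi$ of $\Hoh^*(B\ZZ/r,\ZZ)\cong\ZZ[\xi]/(r\xi)$, and derive a contradiction from the non-nilpotence of $\xi$ versus the nilpotence forced by theorem~\ref{t:ms5}. The only difference is stylistic: you spell out the appeal to theorem~\ref{t:ms5} and the $m(a,a)=1$ step explicitly, whereas the paper leaves that implicit after establishing that $\xi$ is not nilpotent.
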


Recall that a map $f: X \to Y$ is an $n$-equivalence if $\pi_k(f):\pi_k(X)\rightarrow\pi_k(Y)$ is an isomorphism for $k<n$ and is surjective for
$k=n$.

\begin{lemma}
    Let $f:X\rightarrow Y$ be an $n$-equivalence. Then,
    \begin{equation*}
        f^*:\Hoh^k(Y,\ZZ)\rightarrow\Hoh^k(X,\ZZ)
    \end{equation*}
    is an isomorphism for $k<n$ and is an injection for $k=n$.
    \begin{proof}
        By the Whitehead theorem~\cite[Theorem 10.28]{switzer}, $f$ induces an isomorphism on integral
        homology in degrees less than $n$ and a surjection in degree $n$.
        Now apply the universal-coefficients theorem
        \begin{equation*}
            0\rightarrow\Ext(\Hoh_{k-1}(Y),\ZZ)\rightarrow\Hoh^k(Y,\ZZ)\rightarrow\Hom(\Hoh_k(Y,\ZZ),\ZZ)\rightarrow0.
        \end{equation*}
    \end{proof}
\end{lemma}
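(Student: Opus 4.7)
The plan is to use the standard bridge between homotopy equivalences and homology via Whitehead's theorem, and then deduce the cohomology statement via the universal coefficient theorem by a diagram chase.

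First, I would invoke the homology version of Whitehead's theorem for $n$-equivalences: if $f : X \to Y$ is an $n$-equivalence, then $f_* : \Hoh_k(X, \ZZ) \to \Hoh_k(Y, \ZZ)$ is an isomorphism for $k < n$ and a surjection for $k = n$. The paper already cites~\cite[Theorem 10.28]{switzer} for this, so it can be used as a black box.

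Next, I would apply the universal coefficient theorem naturally to both $X$ and $Y$, yielding a commutative diagram of short exact sequences
\begin{equation*}
\begin{CD}
0 @>>> \Ext(\Hoh_{k-1}(Y), \ZZ) @>>> \Hoh^k(Y, \ZZ) @>>> \Hom(\Hoh_k(Y), \ZZ) @>>> 0 \\
@. @VVV @VV f^* V @VVV @. \\
0 @>>> \Ext(\Hoh_{k-1}(X), \ZZ) @>>> \Hoh^k(X, \ZZ) @>>> \Hom(\Hoh_k(X), \ZZ) @>>> 0.
\end{CD}
\end{equation*}
For $k < n$, both $f_* : \Hoh_{k-1}(X) \to \Hoh_{k-1}(Y)$ and $f_* : \Hoh_k(X) \to \Hoh_k(Y)$ are isomorphisms, so the induced maps on $\Ext$ and $\Hom$ are isomorphisms, and the five lemma yields that $f^*$ is an isomorphism on $\Hoh^k$.

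For $k = n$, the map $f_* : \Hoh_{n-1}(X) \to \Hoh_{n-1}(Y)$ is still an isomorphism, so the left vertical arrow is an isomorphism; the map $f_* : \Hoh_n(X) \to \Hoh_n(Y)$ is only a surjection, so that $\Hom(f_*, \ZZ) : \Hom(\Hoh_n(Y), \ZZ) \to \Hom(\Hoh_n(X), \ZZ)$ is an injection (since $\Hom(-, \ZZ)$ is left exact and sends epimorphisms of abelian groups to monomorphisms). A quick diagram chase then shows that $f^* : \Hoh^n(Y, \ZZ) \to \Hoh^n(X, \ZZ)$ is an injection. This really is a routine verification and there is no substantive obstacle; the only mild subtlety is remembering that the UCT sequences split (though unnaturally), but the naturality of the sequences themselves is all that is required here.
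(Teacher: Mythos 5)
Your proof is correct and follows the same route as the paper: Whitehead's theorem to transfer the $n$-equivalence to a homology statement, followed by the universal coefficient theorem. You simply spell out the naturality diagram and the five-lemma/diagram-chase step that the paper leaves implicit.
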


\begin{corollary}
    Let $X$ be a CW-complex, and let $\sk_{n+1}X$ denote the $n+1$-skeleton of $X$, so that
    $i:\sk_{n+1} X\rightarrow X$ is an $n$-equivalence by cellular approximation.
    Then
    \begin{equation*}
        \Omega \sk_{n+1} X\rightarrow \Omega X
    \end{equation*}
    is an $n-1$-equivalence, and so $\Hoh^k(\Omega X,\ZZ)\rightarrow\Hoh^k(\Omega \sk_{n+1} X,\ZZ)$ is an isomorphism for $k<n-1$ an 
    injection for $k=n-1$.
    \begin{proof}
        Taking loops sends $n$-equivalences to $n-1$-equivalences.
    \end{proof}
\end{corollary}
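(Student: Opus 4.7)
The plan is to reduce the statement to the single fact that the reduced loop functor $\Omega$ lowers the order of a weak equivalence by one: if $f\colon X \to Y$ is an $n$-equivalence, then $\Omega f\colon \Omega X \to \Omega Y$ is an $(n-1)$-equivalence. Once this is in hand, applying the preceding lemma to the map $\Omega i\colon \Omega \sk_{n+1} X \to \Omega X$ yields the cohomological conclusion with no further work.

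To verify the reduction, I would invoke the standard natural isomorphism $\pi_k(\Omega Z, *) \cong \pi_{k+1}(Z, *)$, which identifies $\pi_k(\Omega f)$ with $\pi_{k+1}(f)$. By the hypothesis that $f$ is an $n$-equivalence, $\pi_{k+1}(f)$ is an isomorphism whenever $k+1 < n$ and a surjection when $k+1 = n$; equivalently, $\pi_k(\Omega f)$ is an isomorphism for $k < n-1$ and a surjection for $k = n-1$. This is the definition of an $(n-1)$-equivalence.

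Applying the previous lemma to the $(n-1)$-equivalence $\Omega i\colon \Omega \sk_{n+1} X \to \Omega X$ then gives precisely the asserted conclusion: $(\Omega i)^*\colon \Hoh^k(\Omega X, \ZZ) \to \Hoh^k(\Omega \sk_{n+1} X, \ZZ)$ is an isomorphism for $k < n-1$ and an injection for $k = n-1$. There is no real obstacle here beyond careful index bookkeeping; the argument is entirely formal once one accepts cellular approximation (already used to produce the $n$-equivalence $i$) and the degree shift performed by $\Omega$ on homotopy groups.
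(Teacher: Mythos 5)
Your proof is correct and takes essentially the same approach as the paper: the paper's one-line proof ("Taking loops sends $n$-equivalences to $n-1$-equivalences") is exactly the fact you verify via the natural isomorphism $\pi_k(\Omega Z) \cong \pi_{k+1}(Z)$, after which the preceding lemma does the rest. You have simply unpacked the one-liner.
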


\begin{example}\label{ex:kzl}
  Fix a positive integer $r$. Consider the Eilenberg-MacLane space $K(\ZZ/r, 2)$. We know there is a class $\beta \in \Hoh^3(K(\ZZ/r, 2), \ZZ)$
  such that the cohomology suspension $\Omega(\beta)$ is a generator, $\xi$, for
  $\Hoh^2(K(\ZZ/r, 1), \ZZ)$. We also know that $\ord(\xi^n) = r$ for all $n \ge 1$, as was mentioned in the proof of corollary \ref{c:kxz}.

 If we take instead the CW-complex $f_a:X_{a, r} = \sk_{a+1} K(\ZZ/r,2)\rightarrow K(\ZZ/r,2)$, with
  $a \ge 3$, which may be assumed to be a finite CW-complex, then there are isomorphisms arising from the inclusion of the skeleton
  \begin{align*}
    f_a^*: \Hoh^i(K(\ZZ/r, 2), \ZZ) & \isomto \Hoh^i(X_{a,r} , \ZZ)  \qquad &\text{ for $i<a$} \\
    \Omega(f_a)^*: \Hoh^{i-1}( K(\ZZ/r,1), \ZZ) &\isomto \Hoh^{i-1}(\Omega(X_{a,r}), \ZZ) \qquad &\text{for $i<a$}
  \end{align*}
  Denote the class $f_a^*(\beta)$ by $\gamma_a$.
  Since $a\geq 3$, $\gamma_a$ has order $r$. Denote the cohomology suspension $\Omega(\gamma_a)$ by $\alpha$. The
  cohomology suspension being natural, we have $\alpha = \Omega(\gamma_a) =
  f^*(\Omega (\beta)) = f^*(\xi)$. In particular, $\alpha^j$ has order
  $r$ provided it lies in the range where $f_a^*$ is an inclusion,
  i.e.~provided it lies in $\Hoh^i( \Omega(X_{a,r}, \ZZ)$ with $i \le a-1$,  which is to say $j \le \frac{a-1}{2}$. If therefore $\gamma_a : X_{a,r} \to K(\ZZ, 3)$ is to factor
  $X_{a,r} \to BPU_N \to K(\ZZ,3)$, we must have $r \mid m(N, [\frac{a-1}{2}]) $, from which it follows that $n(r, [\frac{a-1}{2}]) \mid N$ by
  corollary \ref{c:useofn}. In particular
  \begin{equation*}
    \per(\gamma_a) = r \qquad \text{ but} \qquad n\left(r, \left[\frac{a-1}{2}\right]\right) | \ind(\gamma_a) 
  \end{equation*}

Letting $a \to \infty$, we obtain in this way for a given $r$, a sequence of spaces $X_{a,r}$ having in each case a class
  $\gamma_a \in \Hoh^3(X_{a,r}, \ZZ)$ such that $\per(\gamma_a) = r$ but $\ind(\gamma_a) \to \infty$ as $a \to \infty$.
\end{example}

\begin{example}\label{ex:kz2}
    Using the example and theorem~\ref{thm:bound}, one finds that for the class $\gamma_5$ just considered on $\sk_6 K(\ZZ/2,2)$,
    \begin{equation*}
        2^2|\ind(\gamma_3)|2^6.
    \end{equation*}
    In this range, $\pi_i^s$ for $i=1,\ldots,5$, the stable homotopy of $B\ZZ/2$ is just the $2$-primary part of the stable homotopy of spheres, except for $\pi_4^s B\ZZ/2=\ZZ/2$.
\end{example}

\subsection{Remarks on the descent spectral sequence for $K(\ZZ/r,2)$}

It was shown in corollary~\ref{c:kxz} that $\ind(\beta)=\infty$ for the Bockstein $\beta\in\Br'(K(\ZZ/r,2))$. Now, suppose
that $\ind_{\K}(\beta)$ were finite. Then, $\ind_{\K}(\gamma)$ would be bounded by $\ind_{\K}(\beta)$ for
every $r$-torsion class $\gamma$ on every space $X$, whereas above there are examples of finite CW-complexes
and classes $\gamma$ with $\per(\gamma)=r$ and $\ind(\gamma)=\ind_{\K}(\gamma)$ arbitrarily large. It follows that
$\ind_{\K}(\beta)$ is infinite.

Let $c_k$ be the non-negative generator of $\Eoh_{k}^{0,0}\subseteq\Hoh^0(K(\ZZ/r,2),\ZZ)$ in the descent spectral
sequence for $\KU(K(\ZZ/r,2))_{\beta}$. The elements $d_k(c_k)$ are obstructions to $\ind(\beta)=c_k$. Note that $c_2=1$
and $d_2^{\beta}(1)=\beta$.

\begin{proposition}
    The obstructions to the finiteness of $\ind_{\K}(\beta)$ in the descent spectral sequence on $K(\ZZ/r,2)$
    are all of finite order in $\Eoh_k^{k,1-k}$, and infinitely many of them are non-zero.
    \begin{proof}
      In fact, the obstructions all lie in subquotients of $\Hoh^p( K(\ZZ/r, 2), \ZZ(q/2))$
      with $p>0$, which are finitely generated groups, and were observed in section \ref{s:primediv} to be $r$-primary,
      hence torsion, and consequently finite.
      
      If the obstructions were to vanish for sufficiently large $k$, then $\ind_{\K}(\beta)$ would be finite,
      a contradiction.
    \end{proof}
\end{proposition}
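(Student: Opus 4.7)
My plan is to treat the two assertions of the proposition separately. The finite-order claim I would resolve by locating the targets of the obstructions in finite groups; the infinitude claim I would obtain by contradiction, combining a permanent-cycle argument with the $\ind_{\K}(\beta)=\infty$ conclusion extracted from the unbounded-index examples of example \ref{ex:kzl}.

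For the finite-order assertion, the obstruction $d_k^{\beta}(c_k)$ lies in $\Eoh_k^{k,1-k}$, a subquotient of $\Eoh_2^{k,1-k} = \Hoh^k(K(\ZZ/r,2),\ZZ((1-k)/2))$. The only potentially nonzero targets occur for odd $k$, where the coefficient group reduces to $\ZZ$. The key input is that $\Hoh^k(K(\ZZ/r,2),\ZZ)$ is finite and $r$-primary for every $k\geq 1$: since $K(\ZZ/r,2)$ is simply connected with all higher homotopy groups in the Serre class $\mathcal{C}_r$, this follows from the mod-$\mathcal{C}_r$ Hurewicz theorem together with the universal coefficients theorem, exactly as recalled in section \ref{s:primediv}. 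Subquotients of finite groups are finite, and the first half of the proposition follows.

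For the infinitude assertion, I would argue by contradiction. Suppose $d_k^{\beta}(c_k)=0$ for all $k\geq N$, so that $c_N$ is a permanent cycle in the descent spectral sequence for $\KU(K(\ZZ/r,2))_{\beta}$. Via the skeletal filtration and the hypersheaf property of $\KU(\cdot)_{\beta}$ (proposition \ref{prop:descent}), this should force $\ind_{\K}(\beta)$ to divide $c_N$, in particular to be finite. Naturality of the rank map along any $f\colon Y\to K(\ZZ/r,2)$ gives $\ind_{\K}(f^{\ast}\beta)\mid\ind_{\K}(\beta)$. Applying this to the skeletal inclusion $f_a\colon X_{a,r}=\sk_{a+1}K(\ZZ/r,2)\hookrightarrow K(\ZZ/r,2)$, and using $\ind_{\K}(\gamma_a)=\ind(\gamma_a)$ from lemma \ref{lem:indkind} (valid since $X_{a,r}$ is a finite CW-complex), the classes $\gamma_a=f_a^{\ast}\beta$ of example \ref{ex:kzl} would satisfy $\ind(\gamma_a)\mid\ind_{\K}(\beta)$ uniformly in $a$, contradicting $\ind(\gamma_a)\to\infty$.

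The main technical point I expect to dwell on is the step that converts ``$c_N$ is a permanent cycle on $K(\ZZ/r,2)$'' into ``$\ind_{\K}(\beta)\mid c_N$,'' since $K(\ZZ/r,2)$ is not finite-dimensional and proposition \ref{p:indKasPermCyc} does not apply verbatim. The cleanest workaround is to sidestep the whole-space spectral sequence and pass immediately to skeleta: by naturality of differentials in the cellular Atiyah--Hirzebruch spectral sequence, vanishing of $d_k^{\beta}(c_k)$ beyond $k=N$ on the ambient space implies the analogous vanishing in the spectral sequence on each $\sk_n K(\ZZ/r,2)$, provided $n$ is large enough that cohomological restriction is injective in the relevant range. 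Proposition \ref{p:indKasPermCyc} then yields the uniform bound $\ind_{\K}(\beta|_{\sk_n})\mid c_N$ directly. This is the only nontrivial convergence point; the remainder of the argument is formal naturality.
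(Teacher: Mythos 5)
Your proposal is correct and follows the same two-part decomposition as the paper's own proof: the mod-$\mathcal{C}_r$ Hurewicz theorem (as recalled in section \ref{s:primediv}) handles finiteness of the obstruction groups, and the unbounded-index classes $\gamma_a$ on skeleta from example \ref{ex:kzl} supply the contradiction for the infinitude claim. Where you add genuine value is in flagging that proposition \ref{p:indKasPermCyc} requires a finite-dimensional CW-complex, so the step ``$c_N$ permanent cycle on $K(\ZZ/r,2)$ implies $\ind_{\K}(\beta)\mid c_N$'' cannot be invoked directly; the paper's one-line ``then $\ind_{\K}(\beta)$ would be finite'' is silent on this convergence point. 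Your workaround---restricting the spectral sequence to $\sk_n K(\ZZ/r,2)$, using naturality of differentials together with the fact that skeletal inclusion is a cohomology isomorphism in a range, and then invoking proposition \ref{p:indKasPermCyc} on the finite skeleton to obtain $\ind_{\K}(\gamma_a)\mid c_N$ for all large $a$---is exactly the right way to make this rigorous, and it avoids having to settle strong convergence of the descent spectral sequence on $K(\ZZ/r,2)$ at all. One small clarification: naturality only buys the containment $f_a^*\bigl(\Eoh_k^{0,0}(K(\ZZ/r,2))\bigr)\subseteq \Eoh_k^{0,0}(\sk_{a+1}K(\ZZ/r,2))$, which already gives $\ind_{\K}(\gamma_a)\mid c_N$ once the obstructions on $K(\ZZ/r,2)$ stabilize at $c_N$; you do not need the stronger identification $c_k^{\mathrm{skel}}=c_k^{\mathrm{ambient}}$, so even the range-of-isomorphism bookkeeping can be dropped. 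With that trimming, your argument is a clean and fully rigorous version of what the paper sketches.
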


Fix a prime $\ell$. The first potentially non-zero obstruction in the twisted spectral sequence for $K(\ZZ/\ell,2)$ is $d_3^{\beta}(1)$.
This is in fact non-zero. Namely, $d_3^{\beta}(1)=\beta\in\Hoh^3(K(\ZZ/\ell,2),\ZZ)$.
It is important to know for computations the next non-zero obstruction in the twisted spectral sequence of $K(\ZZ/\ell,2)$.
The following proposition gives a partial answer for all primes $\ell$, and, together with remark~\ref{ex:kz2}, this shows
that $d_5^{\beta}(2)$ is non-zero for $K(\ZZ/2,2)$.

\begin{proposition}
    The next non-zero obstruction in the twisted spectral sequence for $K(\ZZ/\ell,2)$ is one of
    $d_5^{\beta}(\ell\cdot 1),\ldots,d_{2\ell+1}^{\beta}(\ell\cdot 1)$.
    \begin{proof}
        One knows from example~\ref{ex:kzl} that the index of the class $\gamma_{2\ell+1}$ on $$\sk_{2\ell+2}K(\ZZ/\ell,2)$$ is at least $\ell^2$.
        If the differentials $d_5^{\beta}(\ell\cdot 1),\ldots,d_{2\ell+1}^{\beta}(\ell\cdot 1)$ were all zero, then, by lemma~\ref{lem:indkind}, the descent spectral sequence would give $\ind(\gamma_{2\ell+1})=\ell$,
        a contradiction.
    \end{proof}
\end{proposition}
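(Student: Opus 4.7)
The plan is to argue by contradiction using naturality of the twisted Atiyah--Hirzebruch spectral sequence under a skeletal inclusion, combined with the index lower bound from example~\ref{ex:kzl}. Fix the test space $X = \sk_{2\ell+2} K(\ZZ/\ell,2)$, and let $\gamma_{2\ell+1} \in \Hoh^3(X,\ZZ)$ denote the pullback of $\beta$, as in example~\ref{ex:kzl}. Applying that example with $a = 2\ell+1$ and $r = \ell$, one has $[(a-1)/2] = \ell$ and $n(\ell,\ell) = \ell^{1 + [\log_\ell \ell]} = \ell^2$, so that $\ell^2 \mid \ind(\gamma_{2\ell+1})$. Because $X$ is a finite CW-complex, lemma~\ref{lem:indkind} upgrades this to $\ell^2 \mid \ind_{\K}(\gamma_{2\ell+1})$, and by proposition~\ref{p:indKasPermCyc} this common index is the positive generator of $\Eoh_\infty^{0,0}$ in the twisted AHSS for $\KU(X)_{\gamma_{2\ell+1}}$.

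Next I would make two observations about the twisted AHSS on $X$. First, since $\dim X = 2\ell+2$ and $\pi_{r-1}\KU$ vanishes whenever $r$ is even, the only potentially non-zero differentials out of $\Eoh^{0,0}$ are $d_3,\, d_5,\, \ldots,\, d_{2\ell+1}$ (the odd indices at most $2\ell+2$). Second, the skeletal inclusion $f : X \hookrightarrow K(\ZZ/\ell,2)$ induces a map of spectral sequences which is the identity on $\Eoh_2^{0,0} = \ZZ$, so by naturality every differential $d_r^{\gamma_{2\ell+1}}$ leaving $\Eoh^{0,0}$ is obtained by applying $f^*$ to the corresponding differential $d_r^\beta$ leaving $\Eoh^{0,0}$ on $K(\ZZ/\ell,2)$.

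Now I would assume for contradiction that all of $d_5^\beta(\ell \cdot 1),\, d_7^\beta(\ell \cdot 1),\, \ldots,\, d_{2\ell+1}^\beta(\ell \cdot 1)$ vanish in the twisted AHSS for $K(\ZZ/\ell,2)$. Since $d_3^\beta(\ell \cdot 1) = \ell\beta = 0$ automatically, naturality forces every differential leaving $\Eoh^{0,0}$ on $X$ to annihilate $\ell$, so $\ell \in \Eoh_\infty^{0,0}$ and hence $\ind_{\K}(\gamma_{2\ell+1}) \mid \ell$. This contradicts $\ell^2 \mid \ind_{\K}(\gamma_{2\ell+1})$ from the first paragraph, so at least one of the listed differentials must be non-zero.

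There is no real obstacle once the pieces are in place: the only things to be careful about are that the dimension count really does cut the list of potentially surviving differentials down to $\{d_3, d_5, \ldots, d_{2\ell+1}\}$, and that naturality of the twisted AHSS under $f^*$ genuinely transports the vanishing hypothesis from $K(\ZZ/\ell,2)$ to $X$. Both are standard, and the rest is a contradiction extracted from the lower bound $\ell^2 \mid \ind(\gamma_{2\ell+1})$ already worked out in example~\ref{ex:kzl}.
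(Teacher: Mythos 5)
Your proof is correct and takes essentially the same approach as the paper: contradict the lower bound $\ell^2\mid\ind(\gamma_{2\ell+1})$ from example~\ref{ex:kzl} by using naturality of the twisted Atiyah--Hirzebruch spectral sequence under the skeletal inclusion, together with the dimension count and $\ind_{\K}=\ind$ for finite CW-complexes. You simply spell out the details (the identification of $\ind_{\K}$ with the generator of $\Eoh_\infty^{0,0}$, the list of possible odd differentials, and why $d_3^\beta(\ell)=0$ automatically) that the paper leaves implicit.
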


\begin{corollary}
    In the twisted spectral sequence for $K(\ZZ/2,2)$, the obstruction $d_5^{\beta}(2)$ is non-zero.
    \begin{proof}
        This is a special case of the proposition.
    \end{proof}
\end{corollary}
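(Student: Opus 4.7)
The plan is to simply specialize the preceding proposition to the prime $\ell=2$. The proposition asserts that the next non-zero obstruction after $d_3^\beta(1)$ in the $\beta$-twisted Atiyah--Hirzebruch spectral sequence for $K(\ZZ/\ell,2)$ occurs among the differentials
\begin{equation*}
    d_5^\beta(\ell\cdot 1),\; d_7^\beta(\ell\cdot 1),\; \ldots,\; d_{2\ell+1}^\beta(\ell\cdot 1).
\end{equation*}
When $\ell=2$, the upper index is $2\ell+1 = 5$, so this range degenerates to a single element: $d_5^\beta(2\cdot 1) = d_5^\beta(2)$. Since the preceding proposition guarantees at least one of these is non-zero, and there is only one candidate, we conclude $d_5^\beta(2)\neq 0$ in the $\beta$-twisted spectral sequence for $K(\ZZ/2,2)$.

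The only step to be careful about is that the proof of the preceding proposition relies on example~\ref{ex:kzl} and example~\ref{ex:kz2} to produce a class $\gamma_{2\ell+1}$ on $\sk_{2\ell+2}K(\ZZ/\ell,2)$ with $\per=\ell$ and $\ind\geq \ell^2$. For $\ell=2$, this is precisely the class $\gamma_5$ on $\sk_6 K(\ZZ/2,2)$ discussed in example~\ref{ex:kz2}, where we have $2^2 \mid \ind(\gamma_5)$. So the input data for the proposition is already available in this case, and no further work is needed. The corollary then follows by direct specialization, writing it as a one-line deduction from the proposition.

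Since this is a genuine corollary (a direct logical specialization rather than a substantive new computation), there is no main obstacle to overcome — the entire content sits in the preceding proposition. The proof would read essentially: \emph{Apply the proposition with $\ell=2$; the range of potentially non-zero differentials collapses to the single differential $d_5^\beta(2)$, which must therefore be non-zero.}
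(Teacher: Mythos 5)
Your proof is correct and matches the paper's approach exactly: the paper simply notes this is a special case of the preceding proposition, and you have filled in the (very short) reasoning that for $\ell=2$ the range of candidate differentials collapses to the single differential $d_5^\beta(2)$, which must therefore be non-zero.
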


\begin{corollary}
    If $X$ is a finite CW-complex of dimension at most $6$, and if $\alpha\in\Br(X_{\topo})$ has $\per(\alpha)=2$, then $\ind(\alpha)|8$.
    \begin{proof}
        It follows from a straightforward computation in the Serre spectral sequence for $K(\ZZ/2,1)\rightarrow *\rightarrow K(\ZZ/2,2)$
        that $\Hoh^5(K(\ZZ/2,2),\ZZ)=\ZZ/4$.
    \end{proof}
\end{corollary}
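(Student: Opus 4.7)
The plan is to analyze the twisted Atiyah--Hirzebruch spectral sequence for $\KU(X)_{\alpha}$. Since $X$ is a finite CW-complex, lemma \ref{lem:indkind} gives $\ind(\alpha)=\ind_{\K}(\alpha)$, and proposition \ref{p:indKasPermCyc} identifies $\ind_{\K}(\alpha)$ with the positive generator of $\Eoh_{\infty}^{0,0}\subseteq\Eoh_{2}^{0,0}=\Hoh^0(X,\ZZ)=\ZZ$. Because the coefficients $\pi_q\KU$ are concentrated in even degrees, the only potentially nonzero differentials leaving $\Eoh_k^{0,0}$ are $d_3,d_5,d_7,\ldots$, landing in subquotients of $\Hoh^3(X,\ZZ),\Hoh^5(X,\ZZ),\Hoh^7(X,\ZZ),\ldots$ respectively. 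The dimension hypothesis $\dim X\le 6$ forces $\Hoh^k(X,\ZZ)=0$ for $k\ge 7$, so only $d_3^{\alpha}$ and $d_5^{\alpha}$ need to be controlled.

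For $d_3$, I would apply the identification $d_3^{\alpha}(1)=\alpha$, which has order $\per(\alpha)=2$, so $2\in\Eoh_5^{0,0}$. For $d_5^{\alpha}(2)$, I would argue by naturality. Since $\alpha$ is $2$-torsion, it lifts to a class $\sigma\in\Hoh^2(X,\ZZ/2)$, realized as a map $\sigma:X\to K(\ZZ/2,2)$ such that $\alpha=\sigma^{\ast}(\beta)$ for the Bockstein $\beta\in\Hoh^3(K(\ZZ/2,2),\ZZ)$. Naturality of the twisted Atiyah--Hirzebruch spectral sequence then gives a morphism of spectral sequences from the $\beta$-twisted spectral sequence on $K(\ZZ/2,2)$ to the $\alpha$-twisted spectral sequence on $X$, which is the identity on the $\Eoh^{0,0}_{2}=\ZZ$ entry. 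Consequently $d_5^{\alpha}(2)=\sigma^{\ast}(d_5^{\beta}(2))$.

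Granting the cited computation $\Hoh^5(K(\ZZ/2,2),\ZZ)=\ZZ/4$, the target subquotient in which $d_5^{\beta}(2)$ lives is $4$-torsion, so $4\,d_5^{\beta}(2)=0$ and therefore $d_5^{\alpha}(8)=4\,d_5^{\alpha}(2)=0$. Hence $8\in\Eoh_6^{0,0}=\Eoh_{\infty}^{0,0}$, which gives $\ind_{\K}(\alpha)\mid 8$ and thus $\ind(\alpha)\mid 8$, as required.

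The main obstacle, such as it is, lies in the unstated cohomology computation $\Hoh^5(K(\ZZ/2,2),\ZZ)=\ZZ/4$; this is a short calculation in the Serre spectral sequence for the path fibration $K(\ZZ/2,1)\to\ast\to K(\ZZ/2,2)$, starting from the well-known ring $\Hoh^{\ast}(K(\ZZ/2,1),\ZZ)=\ZZ[\xi]/(2\xi)$ and tracking transgressions through degree $5$. Everything else is an application of naturality and the edge-map interpretation of the index already set up in the paper.
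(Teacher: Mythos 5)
Your proposal is correct and unpacks exactly the argument the paper has in mind: the paper's terse proof supplies only the ingredient $\Hoh^5(K(\ZZ/2,2),\ZZ)=\ZZ/4$, and the surrounding machinery (proposition~\ref{p:indKasPermCyc}, $d_3^{\alpha}(1)=\alpha$, naturality of the twisted Atiyah--Hirzebruch spectral sequence along $\sigma\colon X\to K(\ZZ/2,2)$ as in theorem~\ref{t:PrimesPerIndAgree}, and the dimension bound killing $d_{\ge 7}$) is precisely what you spelled out.
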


In this case our knowledge of the cohomology of $K(\ZZ/2,2)$ gives us better bounds than are obtained via theorem~\ref{thm:bound}. In general, however, it is not
clear to us whether studying the cohomology of $K(\ZZ/r,2)$ should give tighter bounds than~\ref{thm:bound}. We will return to this question in a future work.

\bibliographystyle{amsplain-pdflatex}
\bibliography{brauer,mypapers}

\end{document}